\theoremstyle{plain}
\newtheorem{theorem}{Theorem}[section]
\newtheorem{lemma}[theorem]{Lemma}
\theoremstyle{definition}
\newtheorem{defn}[theorem]{Definition}
\newtheorem*{dcp}{The Philosophy of \dag\-Categories}
\newcommand\ignore[1]{}
\newcommand\mathmove[3]{\raisebox{#1}{\ensuremath{\hspace{#2}#3}}}
\newcommand\shiftbrackets[2]{\raisebox{-#1}{\ensuremath{\left( \raisebox{#1}{#2} \right)}}}
\newcommand\jskip{3pt}
\newcommand\xbigoplus{{\bigoplus}}
\newcommand\Ob{\ensuremath{\mathrm{Ob}}}
\newcommand\Tr{\mathrm{Tr}}
\newcommand{\jbeginproof}{\begin{proof}\vspace{-\jskip}}
\newcommand{\jbeginenumerate}{\begin{enumerate}\vspace{-\jskip}}
\newcommand{\jnoindent}{\vspace{-\jskip}\noindent}
\newcommand{\Hom}{\mathrm{Hom}}
\newcommand{\cat}[1]{\ensuremath{\mathbf{#1}}}
\newcommand{\id}{\ensuremath{ \mathrm{id} }}
\newcommand{\pb}{\;\!}
\newcommand{\ma}{\pb\!}
\newcommand\jbigl{\raisebox{-0.30pt}{\textrm{\large (}\hspace{-0.3pt}}}
\newcommand\jbigr{\hspace{-1.0pt}\mbox{\raisebox{-0.30pt}{\textrm{\large )}}}}
\renewcommand{\dag}{\ensuremath{\dagger}}
\newcommand\sdag\dag
\newcommand{\pdag}{{\ensuremath{\phantom{\sdag}}}}
\renewcommand{\to}{{\mbox{\begin{diagram}[width=4pt] {} & \rTo\end{diagram}}}\hspace{-1pt}}
\newcommand{\into}{\mbox{\begin{diagram}[width=4pt] {}&\rInto\end{diagram}}\ma}
\renewcommand{\mapsto}{\mbox{\begin{diagram}[width=4pt] {}&\rMapsto\end{diagram}}\ma}
\begin{document}

\title{
Completeness of \dag-categories and\\the complex numbers
}

\author{Jamie Vicary
\\
Oxford University Computing Laboratory
\\
\texttt{jamie.vicary@comlab.ox.ac.uk}}

\date{July 8, 2010}
\maketitle

\begin{abstract}
The complex numbers are an important part of quantum theory, but are difficult to motivate  from a theoretical perspective. We describe a simple formal framework for theories of physics, and show that if a theory of physics presented in this manner satisfies certain completeness properties, then it necessarily includes the complex numbers as a mathematical ingredient. Central to our approach are the techniques of category theory, and we introduce a new category-theoretical tool, called the \emph{\dag\-limit}, which governs the way in which systems can be combined to form larger systems. These \dag\-limits can be used to characterize the properties of the \dag\-functor on the category of finite-dimensional Hilbert spaces,  and so can be used as an equivalent definition of the inner product. One of our main results is that in a nontrivial monoidal \dag\-category  with finite \dag\-limits and a simple tensor unit, the semiring of scalars embeds into an involutive field of characteristic~0 and orderable fixed field.
\end{abstract}

\section{Introduction}
The purpose of this paper is to describe a set of properties of a theory of physics, which together imply that the theory makes use of the complex numbers. These properties are phrased in terms of the way that physical processes interact with each other, and as a result are intuitive and physical. The approach as a whole is a robust one; we are not concerned with many fine details of the theory, such as the nature of dynamics, or the way that measurement is described.

There is a vast literature of investigations into the mathematical foundations of quantum theory, which varies greatly in approach and perspective.   Some of this work tackles the problem of deriving the structure of quantum theory from physical or operational principles, a small sample of which is  \cite{a09-oqm, cbh03-cqt, da08-pt, di08-nsp, h01-qt5}. It is this type of research that has the strongest connection to the ideas presented here. There is also a large body of work investigating the properties of generalized quantum theories based on fields different to the complex numbers~\cite{bj96-esq, h96-hqm, la96-oqm, l06-wqp, v05-gqc}, against which the results presented here serve as a foil. 

To apply our method to a particular theory of physics, we first need to obtain from the theory a family of \emph{systems}, equipped with a family of \emph{processes} which go from one system to another. We will often denote processes as $f:A \to B$, which indicates a process $f$ going from system $A$ to system $B$. It is sometimes useful to imagine that systems are \emph{sets of states}, and that processes are \textit{functions} taking states of one system into states of another, but we will not rely on any such interpretation. For any two `head-to-tail' processes $f:A \to B$ and $g:B \to C$ we require that there exists a composite process $f;g:A \to C$, interpreted as the process $f$ followed by the process $g$. We require  that this composition is associative, and for any system $A$ we require the existence of a `trivial' process $\id_A:A \to A$ which is the identity for composition. These are exactly the axioms of a \emph{category}, and we will make essential use of the tools of category theory to prove our results.

We call this category the \emph{category of processes} associated to a particular theory. It will not necessarily completely define the theory; other important aspects, such as observation or measurement, are likely to be outside of its remit. Also,  very few real-world theories of physics will be naturally presented in terms of a category of processes, but for many theories there will nevertheless be natural candidates for such a category. If any of these candidates have the properties we will describe, that will indicate that the underlying theory somehow makes use of on the complex numbers. For the case of the theory of quantum mechanics, which certainly makes use of the complex numbers, we might define the category of processes to have separable Hilbert spaces as systems and bounded linear maps as processes, and the category obtained in this way satisfies the properties we will describe.

The first property that we require is that each process has an \emph{adjoint}, which can be considered as a formal `reversal' of the process. We use the term `adjoint' since this is a generalization of a familiar operation from quantum theory, taking the adjoint of a bounded linear map between Hilbert spaces. For any $f:A \to B$ its adjoint is a process $f ^\sdag:B \to A$; we require that $(f^\sdag) ^\sdag = f$ for any process $f$, and also that $(f;g) ^\sdag = g ^\sdag; f^\sdag$ for any composable processes $f$ and $g$. These properties define a \emph{functor} from our category to itself, and we call this the \emph{\dag\-functor}.  A second property that we require is \emph{superposition}: for any two parallel processes $f,g:A \to B$ there must exist a third process \mbox{$f+g:A \to B$}, where ${+}$ is an associative, unital, commutative operation with the property that \mbox{$(f+g);h=f;h+g;h$} for any $h:B \to C$ and any system $C$.

Finally, we require a notion of \emph{compound system}: for any two systems $A$ and $B$ there must exist a compound system $A \otimes B$, where ${\otimes}$ is an associative, unital operation.\footnote{Experts in category theory will note that we are describing a strict monoidal category here; a weak one would do just as well.} A useful intuition for this is the systems $A$ and $B$ existing simultaneously, but independently and without necessarily interacting. The unit for the compounding operation $\otimes$ is a system $I$, called the \emph{neutral} or \emph{unit} system, such that $I \otimes A = A = A \otimes I$ for all systems $A$. This compounding operation must also be defined on processes, so for all $f:A \to B$ and $g:C \to D$ there exists a process $f \otimes g: A \otimes C \to B \otimes D$; this must interact well with composition, satisfying the compatibility equation $(f \otimes g); (h \otimes j) = (f;h) \otimes (g;j)$ for all appropriate processes $f$, $g$, $h$ and $j$. If we interpret the process $f \otimes g$ as representing processes $f$ and $g$ occurring simultaneously and independently, then this compatibility equation makes intuitive sense: it says that performing $f$ and $g$ simultaneously, and then performing $h$ and $j$ simultaneously, is the same as performing $f$ followed by $h$, while simultaneously performing $g$ followed by~$j$.

Suppose now that we have a whole collection of systems and processes, of the following general form:
\begin{equation}
\label{diagexample}
\begin{diagram}[midshaft,width=20pt]
&&&&\rnode{G}{J}&&&&&&
\\
&&&\rnode{F}{G}&&&&&&\rnode{H}{H}&
\\
\rnode{J}{A} &&\rnode{A}{B} & & \rnode{B}{C} && \rnode{C}{D} & \hspace{0pt}&\rnode{D}{E} & & \rnode{E}{F}
\ncarc[arcangle=12]{->}{A}{F}
\ncarc[arcangle=30]{->}{F}{G}
\ncarc[arcangle=5]{->}{F}{G}
\ncarc[arcangle=-20]{->}{F}{G}
\ncarc[arcangle=-10]{->}{B}{F}
\ncarc[arcangle=12]{->}{C}{G}
\ncarc[arcangle=-12]{->}{C}{G}
\ncarc[arcangle=10]{->}{D}{H}
\ncarc[arcangle=12]{->}{E}{H}
\ncarc[arcangle=-12]{->}{E}{H}
\end{diagram}
\end{equation}
In this diagram, letters represent systems and arrows represent processes. We call this type of diagram a \emph{finite forest-shaped multigraph}: there are a finite number of connected components, each of which is a finite tree with a root at the top and leaves at the bottom, and we allow the possibility of multiple parallel branches between nodes. This gives us a collection of allowed processes with which to turn leaf systems at the bottom into node systems at the top. We could understand this physically as describing a simple sort of nondeterministic dynamics, where we evolve from an initial system at the bottom of the diagram to a final system towards the top, making a choice of process whenever more than one is available. Assuming for a moment that our systems are composed of sets of states, and our processes are functions, we can ask the following: are there any states of initial systems which will always transform  into the same final state, regardless of the processes chosen? We could also take a more computational perspective, and regard the processes as \textit{constraints}; an analogous question would then be to find the initial states which satisfy these constraints.

The answer to this is provided by the notion of \emph{limit}, an important and widely-used tool in category theory. For our purposes, a limit is described by a system $L$ equipped with a family of processes $l_X:L \to X$, where $X$ ranges over each of the leaf systems in our diagram. These processes must also satisfy a universality property. We can interpret the limit system $L$ as comprising all of the leaf systems in our diagram combined together, but with states identified when they evolve in the same way under the action of the processes in the diagram. We can visualize this with the following diagram:
\begin{equation}
\label{diagexample2}
\begin{diagram}[midshaft,width=20pt]
&&&&\rnode{G}{J}&&&&&&
\\
&&&\rnode{F}{G}&&&&&&\rnode{H}{H}&
\\
\rnode{X}{A} &&\rnode{A}{B} & & \rnode{B}{C} && \rnode{C}{D} & \hspace{0pt}&\rnode{D}{E} & & \rnode{E}{F}
\\
\\
&&&&& {\gray\rnode{L}{L}} &&&&&
\ncarc[arcangle=12]{->}{A}{F}
\ncarc[arcangle=30]{->}{F}{G}
\ncarc[arcangle=5]{->}{F}{G}
\ncarc[arcangle=-20]{->}{F}{G}
\ncarc[arcangle=-10]{->}{B}{F}
\ncarc[arcangle=12]{->}{C}{G}
\ncarc[arcangle=-12]{->}{C}{G}
\ncarc[arcangle=10]{->}{D}{H}
\ncarc[arcangle=12]{->}{E}{H}
\ncarc[arcangle=-12]{->}{E}{H}
\psset{linecolor=gray}
{\gray
\ncarc[arcangle=20]{->}{L}{X} \naput[npos=0.79]{l_A\!}
\ncarc[arcangle=15]{->}{L}{A} \naput[npos=0.715]{l_B\!\!\!}
\ncarc[arcangle=10]{->}{L}{B} \naput[npos=0.6]{l_C\!\!}
\ncarc[arcangle=-10]{->}{L}{C} \naput[npos=0.515]{l_D\!}
\ncarc[arcangle=-15]{->}{L}{D} \nbput[npos=0.710]{l_E\!\!}
\ncarc[arcangle=-20]{->}{L}{E} \nbput[npos=0.790]{l_F\!\!\!}
}
\end{diagram}
\end{equation}
The limit system and its associated processes are drawn in gray here.

Our final requirement is that this limit is compatible with the \dag\-functor on our category of processes, which allows us to formally `reverse' processes. Suppose that we compose the processes $l_A ^{}: L \to A$ and $l_A ^\sdag:A \to L$; this composite  evolves a state of $L$ into a state of $A$, and then evolves this back again into a state of $L$. We can consider this as taking a state of $L$ and retaining only that part of it which arises from $A$. This makes sense, as we described the $L$ as being constructed from the combination of all of the leaf systems. It is reasonable to require that a state of $L$ is precisely specified by the sum total of its restrictions to all of the leaf systems. Using our superposition operation, we can express this principle with the following equation:
\begin{equation}
l_A ^{\pdag}; l_A ^\sdag + l_B ^{}; l_B ^\sdag + l_C ^{\pdag}; l_C ^\sdag + l_D ^{} ; l_D ^\sdag + l_E ^{\pdag}; l_E ^\sdag + l _F ^{}; l_F ^\sdag= \id _L ^{}.
\end{equation}
We call this the \emph{normalization condition}. If we can find a limit satisfying this condition then we call it a \emph{\dag\-limit}, and if a category has \dag\-limits for all finite forest-shaped multigraphs then we say that it has \emph{all finite \dag\-limits}.

Categories with all finite \dag\-limits have many interesting properties, which we explore throughout this paper. One useful property is that a category can have at most a single superposition rule (the `+' operation) such that all finite \dag\-limits exist! So the superposition rule is more like a \textit{property} of the \dag\-limits than a structure on the underlying category, and we do not need to specify it explicitly. Having \dag\-limits also implies other useful features, including \emph{nondegeneracy} (or \emph{positivity}) of the \dag\-functor, and \textit{cancellability} for the superposition operation, as we will explore later.

We can now state an interesting result. Suppose we have a category of processes which has a \dag\-functor, compound systems and all finite \dag\-limits, such that the `neutral system'~$I$ --- the unit for constructing compound systems --- is `simple', meaning that the only system smaller than it is the empty system. Then we can show that the analogue of  `quantum amplitudes' in this category take values in an involutive field with characteristic~0, and with orderable fixed field. We interpret this field as analogous to $\mathbb{C}$, the involution as analogous to complex conjugation, and the orderable fixed field as analogous to $\mathbb{R}$.

Furthermore, suppose that the results of measurements in our theory are valued in this orderable fixed field. Then if every bounded sequence of measurement results has a least upper bound, and these least upper bounds are preserved when we add a constant to our measurement results, it follows that our involutive field is $\mathbb{C}$ itself, the orderable fixed field is $\mathbb{R}$, and the order is the familiar order on the real numbers.

An important inspiration for the development of \dag\-limits came from the category \cat{FdHilb}, which has finite-dimensional Hilbert spaces as objects and linear maps as morphisms. This can be considered as a category of processes which emerges from quantum theory. That category has a \dag\-functor, given by taking linear maps to their adjoints, and with this \dag\-functor the category \cat {FdHilb} has all finite \dag\-limits. In fact, as we explore later with Theorem~\ref{innerproducttheorem}, this \dag\-functor can actually be completely defined by its completeness properties. Since knowing the adjoints of the bounded linear maps to a Hilbert space is the same as knowing the inner product on it, this gives a new axiomatization of inner products.

In a nutshell, what we do in this paper is to observe how close the abstract theory of monoidal \dag\-categories comes to describing the structure of real physical theories, and then to `take it seriously'. This is not a new idea. In particular, it has been advanced with much success in the field of quantum computation, especially by Abramsky and Coecke~\cite{ac08-cqm,c06-icp}. We believe that this is a fruitful perspective which holds the promise of delivering significant further results in the future. We note that interesting work has already been carried out which takes the extends the results described here, adding axioms that imply that the resulting category embeds into a category of Hilbert spaces~\cite{h09-eth}.

\subsection*{Acknowledgements}

I am grateful to Samson Abramsky, Chris Isham, Zurab Janelidze, Paul Levy and Paul Taylor, and especially to the anonymous referee, Kevin Buzzard, Chris Heunen and Peter Selinger, for useful comments and discussions.

I have used Paul Taylor's diagrams package, and I am grateful for financial support from the EPSRC and the ONR. I am also grateful to the program committee of \emph{Category Theory 2008} for the opportunity to present some early versions of these results.

\section{\dag\-Functors, \dag\-categories and \dag\-limits}
\label{dagstrucsect}

\subsection*{The \dag\-functor}

Of all the categorical structures that we will make use of, the most fundamental is the \dag\-functor, first made explicit in the context of categorical quantum mechanics by Abramsky and Coecke \cite{ac04-csqp,ac08-cqm}. As described in the introduction, it is motivated by the process of taking the \textit{adjoint} of a linear map between two Hilbert spaces: for any bounded linear map of Hilbert spaces $f:H \to J$, the adjoint $f ^\sdag: J \to H$ is the unique map satisfying
\begin{equation}
\langle f(\phi),\psi \rangle _J = \langle \phi, f ^\sdag (\psi) \rangle _H
\end{equation}
for all $\phi \in H$ and $\psi \in J$, where the angle brackets represent the inner products for each space.

Abstractly, we define a \textit{\dag\-functor} as a contravariant functor from a category to itself, which is the identity on objects, and which satisfies $\dag \circ  \dag = \id _\cat{C}$. A \emph{\dag\-category} is a category equipped with a particular choice of \dag\-functor. These are sometimes known instead as \emph{Hermitian categories} or \emph{$*$-categories}, but we prefer the `\dag{}' notation, since it is snappier and more flexible than `Hermitian', and the symbol `$*$' is also used to denote duals for objects in a monoidal category. Although it is often uninformative to name something after the symbol that denotes it, in our view this is outweighed by the convenience of having a straightforward naming convention \cite{s06-idc} for `\dag\-versions' of many familiar constructions, such as  \dag\-biproducts, \dag\-equalizers, \dag\-kernels, \dag\-limits, \dag\-subobjects and so on, all of which we will encounter below.

The inner product on a Hilbert space is used to calculate the adjoint of a linear map, and in fact the process has a converse \cite{ac05-apt}: knowledge of the adjoints can be used to reconstruct the inner product. To show this, we use the fact that vectors $\phi \in H$ correspond to linear maps $\mathbb{C} \to H$ by considering the image of the number $1$ under any such map. For any two vectors $\phi, \psi \in H$ we can calculate the inner product as
\[
\langle \phi, \psi \rangle _H = \langle \phi(1), \psi(1) \rangle _H = \langle 1, \phi ^\sdag (\psi(1)) \rangle _\mathbb{C} = \phi ^\sdag( \psi (1)),
\]
where the last step follows from the fact that the inner product on the complex numbers is determined by multiplication. For this reason, the \dag\-functor can be thought of not only as an abstraction of the construction of adjoint linear maps, but also as an abstraction of the inner product. However, we note that an \textit{arbitrary} \dag\-functor might give rise to `inner products' which are quite badly-behaved: for example, in a category with zero morphisms, we might have $\langle \phi,\phi \rangle =0$ for $\phi \neq 0$. The \dag\-functors which arise from inner products are characterized in the last section of the paper, in Theorem~\ref{innerproducttheorem}.

We write the action of a \dag\-functor on a morphism $f : A \to B$ as $f ^\sdag : B \to A$, and we refer to the morphism $f ^\sdag$ as the \emph{adjoint} of $f$. We also make the following straightforward definitions, taken from the vocabulary of functional analysis: a morphism is \emph{unitary} if its adjoint is its inverse ($f ; f ^\sdag = \id_A$ and $f ^\sdag; f = \id_B$), an \emph{isometry} if its adjoint is its retraction ($f; f ^\sdag = \id _A$), and is \emph{self-adjoint} if it equals its adjoint ($f=f^\sdag$). If a morphism $f:A \to B$ is an isometry, we also say that $A$ is a \emph{\dag\-subobject} of $B$. If two objects in a \dag\-category have a unitary morphism going between them, we say that they are \emph{unitarily isomorphic}; if every pair of isomorphic objects are unitarily isomorphic, then the \dag\-category is a \emph{unitary \dag\-category}. Many important \dag\-categories are unitary; for example, the \dag\-category of Hilbert spaces with \dag\-functor given by adjoint, the \dag\-category of manifolds and cobordisms with \dag\-functor given by taking the opposite cobordism, or any 2--Hilbert space \cite{b97-hda2}.

There is a natural notion of equivalence between \dag\-categories, which we call \emph{unitary \dag\-equivalence}. Let \cat{C} and \cat{D} be \dag\-categories, with \dag\-functors $\dag:\cat{C} \to \cat{C}$ and $\ddag: \cat D \to \cat D$. These \dag\-categories are unitarily \dag\-equivalent if there exists a functor $F:\cat C \to \cat D$ between them which is part of an adjoint equivalence of categories, such that the unit and counit natural transformations are unitary at every stage, and if it \emph{commutes} with the \dag\-functors, satisfying $F \circ \dag = \ddag \circ F$. As we later show in Lemma~\ref{ude}, a functor can be made a part of a unitary \dag\-equivalence iff it commutes with the \dag\-functors, and is full, faithful, and unitarily essentially surjective.

Merely equipping a category with a \dag\-functor is certainly not trivial, but is perhaps not itself particularly powerful. However, interesting phenomena start to arise when we relate the \dag\-functor to other constructions that we can make with the category. It often pays off to do this enthusiastically, a policy which deserves to be very clearly stated.
\begin{dcp}
\em When working with a \dag\-category, all important structures should be chosen so that they are compatible with the \dag\-functor.
\end{dcp}

\jnoindent
Of course, this is a rule of thumb rather than a technical statement; what counts as an `important structure', and what `compatible' should mean, will depend upon the setting. However, there are many situations in which applying this philosophy bears interesting results:
\begin{itemize}
\item
The constructions made in this paper are a prime example, where we require limits to be compatible with the \dag\-functor.
\item In the study of topological quantum field theories, it is physically well-motivated to require that the functor defining the field theory should be compatible with the \dag\-functor on the category of cobordisms and the \dag\-functor on the category of Hilbert spaces. This gives a \emph{unitary} topological quantum field theory.
\item If a Frobenius algebra in \cat{Hilb} has its multiplication related to its comultiplication by the \dag\-functor, then it is a C*-algebra \cite{cpv08-dfb, v08-cfqa}.
\item When working with a monoidal \dag\-category, it is often useful to require that the left unit, right unit, associativity and braiding isomorphisms are unitary at every stage~\cite{ac05-apt, b97-hda2, v08-cfqa}. \ignore{\item In the categorical framework for the quantum harmonic oscillator described in \cite{v08-cfqho}, a crucial requirement is that the canonical isomorphisms $F(A \oplus B) \simeq F(A) \otimes F(B)$ should be unitary.
}
\end{itemize}
 
\subsection*{Constructing \dag\-limits}

As mentioned earlier, \dag\-limits are the central categorical construction which we will use to prove our results. We use diagrams in the shape of forest-shaped multigraphs with a finite number of leaves, such as example~(\ref{diagexample}) in the introduction. These are defined as diagrams with a finite number of connected components, each of which is a directed tree oriented from a finite number of leaves at the bottom to a root at the top, and for which multiple parallel branches between nodes are allowed. Note that there is no ambiguity about which objects are the leaves; they are exactly the systems in the diagram which are not the target of any process in the diagram (except for an identity process.)

 In the rest of the paper, in the context of \dag\-limits, we will often simply  refer to these forest-shaped multigraphs as \textit{diagrams}. We say that such a diagram is finite when it has a finite number of arrows. On page \pageref{moregeneral} we give a generalized definition of \dag\-limit which applies to a much larger class of diagrams, but \dag\-limits of finite forest-shaped multigraphs are sufficient to obtain our results.

Let $F: \cat J \to \cat C$ define such a diagram in the category \cat C. A \emph{cone} for this diagram is an object $X$ in \cat C, equipped with \emph{cone maps} $x_S:X \to F(S)$ for all objects $S$ of \cat J, such that for any map $f:A \to B$ in \cat J the equation $x_A ; F(f) = x_B$ holds. A \emph{limit} for the diagram is a special cone $L$, equipped with cone maps $l_S:L \to F(S)$, such that for any cone $(X,x_S)$ for the diagram, there is a unique map $m:X \to L$ such that $x_S = m; l_S$ for all objects $S$ in \cat J. For more information about limits in category theory have a look at any introductory category theory textbook, such as \cite{m95-ecet}.

If \cat C is a \dag\-category with a superposition rule `+' on the hom-sets --- or technically, which is \emph{enriched in commutative monoids} --- then 
a \emph{\dag\-limit} for a diagram $F:\cat J \to \cat C$ is a limit for the diagram in the usual sense, such that the normalization condition
\begin{equation}
\label{norm}
\sum _{S} l_S ^\pdag; l_S ^\sdag = \id_L, \qquad \textrm{$S$ is a leaf in \cat J}
\end{equation}
holds, where the maps $l_S : L \to F(S)$ are the projection maps from the limit object to the leaves. Since we require all diagrams to only have finite number of leaves, this is a finite sum, and will always be well-defined. It seems that this definition is sensitive to the definition of the superposition rule `+' used to define the summation, but in fact it is not, as explained by Lemma~\ref{sumlemma}. If a \dag\-category has a \dag\-limit for every diagram then we say it has \emph{all \dag\-limits}. If it only has a \dag\-limit for all finite diagrams, then we say that it has all \emph{finite \dag\-limits}.

These \dag\-limits are, in particular, ordinary limits, and so will be isomorphic to any other ordinary limit. \label{limitdiscussion} However, between themselves, \dag\-limits satisfy a stronger universal property~--- they are unique up to unique \emph{unitary} isomorphism.

\newpage
\begin{lemma}
\label{uniqueness}
In a \dag\-category, any \dag\-limit is unique up to unique unitary isomorphism.
\end{lemma}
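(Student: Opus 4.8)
The plan is to reduce the claim to the corresponding fact for ordinary limits and then to upgrade the canonical mediating isomorphism to a unitary one, the extra input being the normalization condition. So, suppose $L$ and $L'$ are both \dag\-limits for a diagram $F:\cat J \to \cat C$, with cone maps $l_S:L \to F(S)$ and $l'_S:L' \to F(S)$. Since \dag\-limits are in particular ordinary limits, the standard universal-property argument produces a unique cone morphism $m:L \to L'$ (so that $m; l'_S = l_S$ for every object $S$ of $\cat J$) and a unique cone morphism $m':L' \to L$; applying the uniqueness of mediating maps to the cone morphisms $m; m' : L \to L$ and $m'; m : L' \to L'$, which must both equal the relevant identity, shows $m;m' = \id_L$ and $m';m = \id_{L'}$. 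Hence $m$ is already an isomorphism, with $m^{-1} = m'$.

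The next step is to show that this $m$ is unitary, by conjugating one normalization condition into the other. Taking adjoints of $m; l'_S = l_S$ and using contravariance gives $(l'_S)^{\sdag}; m^{\sdag} = l_S^{\sdag}$. Since \cat C is enriched in commutative monoids, composition is bilinear with respect to the superposition rule, so I can move $m$ and $m^{\sdag}$ through a finite sum and compute
\[
m; m^{\sdag} \;=\; m; \Big( \sum_{S \text{ a leaf}} l'_S; (l'_S)^{\sdag} \Big); m^{\sdag} \;=\; \sum_{S \text{ a leaf}} (m; l'_S); \big((l'_S)^{\sdag}; m^{\sdag}\big) \;=\; \sum_{S \text{ a leaf}} l_S; l_S^{\sdag} \;=\; \id_L,
\]
where the first equality is the normalization condition for $L'$ and the last is the normalization condition for $L$. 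Because $m$ is already known to be an isomorphism, the identity $m; m^{\sdag} = \id_L$ forces $m^{\sdag} = m^{-1}$; substituting this back in gives $m^{\sdag}; m = m^{-1}; m = \id_{L'}$ as well, so $m$ is a unitary isomorphism.

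For the uniqueness statement, I would note that any isomorphism $n:L \to L'$ with $n; l'_S = l_S$ for all $S$ is in particular a cone morphism from the cone $L$ to the limit $L'$, hence equals $m$ by the uniqueness clause of the ordinary limit universal property; in particular there is at most one such isomorphism, unitary or otherwise, and we have just exhibited one. So $L$ and $L'$ are unitarily isomorphic by a unique isomorphism commuting with the cone maps.

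I expect the only real subtlety to be bookkeeping: keeping the direction of composition straight so that adjoints land in the correct hom-sets, and being explicit that the license to pull $m$ and $m^{\sdag}$ out of the finite sum is exactly bilinearity of composition over the superposition rule, i.e.\ enrichment in commutative monoids. There is no genuinely hard step here; the content of the lemma is simply that the normalization condition is precisely the ingredient needed to promote the usual canonical iso between limits to a unitary one.
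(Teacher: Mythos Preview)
Your proof is correct and follows essentially the same route as the paper: both insert the normalization identity $\sum_S l'_S;(l'_S)^\sdag=\id_{L'}$ between $m$ and $m^\sdag$ and simplify using $m;l'_S=l_S$ to obtain $m;m^\sdag=\id_L$. The only organizational difference is that you first establish that $m$ is invertible via the ordinary limit argument and then deduce $m^\sdag;m=\id_{L'}$ from $m^\sdag=m^{-1}$, whereas the paper simply asserts that $c^\sdag;c=\id_M$ follows ``in a similar way''; your version is slightly more explicit but not genuinely different.
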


\jbeginproof
Let $F : \cat J \to \cat C$ be a diagram, and let $(L,l_S)$ and $(M,m_S)$ be \dag\-limits for this diagram, where $l_S: L \to F(S)$ and $m_S:M \to F(S)$ are the respective limit maps, and $S$ is a variable that ranges over the leaf objects of $\cat J$. Then by the properties of limits, there must be a unique comparison isomorphism $c:L \to M$ with the property that $c; m_S = l_S$ for all $S$. By the normalization condition we have the equations $\sum _{S} l_S ^\pdag; l_S ^\sdag = \id _L$ and $\sum _{S} m_S ^\pdag; m_S ^\sdag = \id _M$, and we employ these in the following way to show that $c; c ^\dag = \id_L$:
\begin{equation*}
c; c ^\sdag = c; \id _M; c ^\sdag =  c;
\shiftbrackets
{3.3pt}
{$\displaystyle \sum _{S} m_S ^\pdag; m_S ^\sdag \!$}
\hspace{-1pt} ; c ^\sdag =\sum _{S} c; m_S ^\pdag; m_S ^\sdag; c ^\sdag = \sum _{S} l_S ^\pdag; l_S ^\sdag = \id_L.
\end{equation*}
It can be shown in a similar way that $c ^\sdag; c = \id_M$, and so $c$ is unitary.
\end{proof}

\subsubsection*{\dag\-Products and \dag\-equalizers}

We will make substantial use of two particularly important types of \dag\-limit. The first type of \dag\-limit is a \emph{finite \dag\-product}, which is the \dag\-limit of a finite discrete diagram, for which every object is  a leaf. The second type is a \emph{finite \dag\-equalizer}, which is the limit of a diagram consisting of a finite number of arrows, all of which have the same source object and the same target object; this has exactly one leaf vertex. We can draw these \dag\-limits as follows, with the diagram in black and the \dag\-limit and its associated maps in grey:
\begin{gather}
\begin{array}{c@{\hspace{40pt}}c}
\begin{diagram}[width=10pt]
\rnode{a}{\bullet} && \rnode{b}{\bullet} && \cdots && \rnode{c}{\bullet}
\\
&&&\rnode{l1}{\gray L_{\mathrm{B}}}&
\end{diagram}
&
\begin{diagram}[width=30pt]
\rnode{l2}{\gray L_{\mathrm{E}}} &\gray\rTo ^l& \black \rnode{d}{\bullet} && \rnode{e}{\bullet}
\end{diagram}
\ncarc[arcangle=50]{->}{d}{e}
\ncarc[arcangle=20]{->}{d}{e} \Bput{\textrm{\raisebox{50pt}{$\black\vdots$}}}
\ncarc[arcangle=-50]{->}{d}{e}
\psset{linecolor=gray}
{\gray
\ncarc[arcangle=20]{->}{l1}{a} \Aput{l_1\!}
\ncarc[arcangle=10]{->}{l1}{b} \Bput{l_2}
\ncarc[arcangle=-20]{->}{l1}{c} \Bput{l_N}
}
\\[30pt]
l_1 ^{} ; l_1 ^\sdag + l_2 ^{} ; l_2 ^\sdag + \cdots + l_N ^{} ; l_N ^\sdag = \id _{L_{\mathrm{B}}}
&
l; l^\sdag = \id _{L_{\mathrm{E}}}
\end{array}
\end{gather}
The relevant form of the normalization condition (\ref{norm}) is given underneath each diagram.

We emphasize that a \dag\-equalizer is exactly a conventional category-theoretical equalizer, such that the equalizing map is an isometry. This extra isometry condition is a natural one to consider in a \dag\-category, since equalizers are always monic, and the isometry condition can be considered as a strengthening of the monic property. We also define a \emph{\dag\-kernel} to be a \dag\-equalizer of a parallel pair consisting of an arrow and the zero arrow. This research programme was born out of a study of the properties of  \dag\-categories with \dag\-equalizers, and I am grateful to Peter Selinger for suggesting them as a construction.

A first useful result is that \dag\-products are exactly  \emph{\dag\-biproducts}, which are well-known generalizations of the concept of `orthogonal direct sum': for any two objects $A$ and $B$, their \dag\-biproduct is an object $A \oplus B$ equipped with \emph{injection morphisms} $i_A : A \to A \oplus B$ and $i_B:B \to A \oplus B$ satisfying the following equations:
\newcommand\tempgap{\hspace{108pt}}
\newcommand\tx{4.5pt}
\begin{gather}
\nonumber
i_A ^\sdag; i_A ^\pdag + i_B ^\sdag; i_B ^\pdag = \id_{A \oplus B}
\\
\hspace{-\tx}
i_A ^\pdag; i_A ^\sdag = \id _A
\hspace{\tx}
\tempgap
i_B ^\pdag; i_B ^\sdag = \id _B
\\
\nonumber
i_A ^\pdag; i_B ^\sdag = 0 _{A,B}
\tempgap
i_B ^\pdag; i_A ^\sdag = 0 _{B,A}
\end{gather}
The adjoints to the injection morphisms are called the \emph{projection morphisms}. This definition of \dag\-biproduct  generalizes in an obvious way to any finite list of objects.
\begin{lemma}
\label{dagbiproducts}
The \dag\-limit of a discrete diagram (that is, a \dag\-product) is the \dag\-biproduct of the objects of the diagram, and the cone maps are the \dag\-biproduct projections.
\end{lemma}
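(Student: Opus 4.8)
The plan is to establish a two-way correspondence: an object $L$ equipped with maps $l_S : L \to F(S)$, where $S$ ranges over the objects of a finite discrete diagram $F : \cat J \to \cat C$, is a \dag\-limit of $F$ if and only if $(L, l_S^\dag)$ is a \dag\-biproduct of the family $\{ F(S) \}$, in which case the cone maps $l_S$ are exactly the biproduct projections. Everything reduces to playing the normalization condition off against the ordinary universal property; no analytic input is required, and finiteness of the leaf set enters only to make the sums well-defined.

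For the direction ``\dag\-biproduct $\Rightarrow$ \dag\-limit'', I would take injections $i_S$ satisfying the three biproduct axioms, put $l_S := i_S^\dag$, and verify two things: that $(L, l_S)$ is an ordinary limit, and that the normalization condition $\sum_S l_S ; l_S^\dag = \id_L$ holds. For the first, the mediating map out of a cone $(X, x_S)$ is $m := \sum_S x_S ; i_S$; the relations $i_S ; i_T^\dag = \id$ for $S = T$ and $0$ otherwise give $m ; l_T = x_T$, and uniqueness follows by expanding $\id_L = \sum_S i_S^\dag ; i_S$. For the second, the first biproduct axiom is literally the normalization condition after substituting $i_S = l_S^\dag$. (In passing this reproves the standard fact that biproducts are products.)

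The substantive direction is ``\dag\-limit $\Rightarrow$ \dag\-biproduct'', and this is the content of the lemma as stated. Given a \dag\-limit $(L, l_S)$, each object $F(S)$ carries a cone whose map to $F(T)$ is $\id_{F(S)}$ when $T = S$ and the zero map otherwise --- there is nothing to check for the cone condition because the diagram is discrete --- so let $u_S : F(S) \to L$ be the unique mediating map, satisfying $u_S ; l_T = \id$ for $T = S$ and $0$ otherwise. The one real idea is then to push the normalization condition through $u_S$:
\[
u_S \;=\; u_S ; \id_L \;=\; u_S ; \Bigl( \textstyle\sum_T l_T ; l_T^\dag \Bigr) \;=\; \sum_T (u_S ; l_T) ; l_T^\dag \;=\; l_S^\dag ,
\]
since only the $T = S$ summand survives. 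Thus the biproduct injections are forced to coincide with the adjoints of the cone maps. Reading off $l_S^\dag ; l_T = u_S ; l_T$ gives the isometry axiom $l_S^\dag ; l_S = \id_{F(S)}$ and the orthogonality axioms $l_S^\dag ; l_T = 0$ for $S \ne T$, and the normalization condition, written as $\sum_S (l_S^\dag)^\dag ; l_S^\dag = \id_L$, supplies the remaining biproduct axiom; hence $(L, l_S^\dag)$ is a \dag\-biproduct whose projections are the $l_S$.

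The only delicate point --- and what I would flag as the main obstacle --- is that the hom-sets carry merely a commutative-monoid structure, so one may not cancel or subtract terms inside a sum. This is precisely why the argument must be routed through the universal property and through the two distinct expansions of $\id_L$ (once as $\sum_T i_T^\dag ; i_T$ coming from the biproduct, once as $\sum_T l_T ; l_T^\dag$ coming from normalization), rather than by equating and then subtracting them. Uniqueness of the resulting \dag\-biproduct up to unitary isomorphism is then immediate from Lemma~\ref{uniqueness}.
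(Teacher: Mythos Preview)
Your proof is correct and follows essentially the same route as the paper's: construct the ``Kronecker delta'' cones $F(S) \to L$ via the universal property, precompose the normalization condition with these mediating maps to identify them with $l_S^\sdag$, and read off the \dag\-biproduct equations. The paper carries this out for a two-object diagram and leaves the general finite case to the reader, while you treat the general case directly and additionally sketch the (easy) converse; but the substantive idea is the same.
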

\jbeginproof
We prove our lemma for the case of a discrete diagram with two objects; the extension to any finite discrete diagram of objects is straightforward. Consider the \dag\-limit of the diagram consisting of two objects, $A$ and $B$. The \dag\-limit is a limit object $L$, equipped with morphisms $l_A$ and $l_B$ which satisfy
\begin{equation}
\label{db1}
l_A ^\pdag; l_A ^{\sdag} + l_B ^\pdag ; l_B ^\sdag = \id _{L}.
\end{equation}
Since $L$ is the limit, there is a unique map $\langle 0_{B,A},\id_{B} \rangle : B \to L$ with $\langle 0_{B,A},\id_{B} \rangle; l_A = 0_{B,A}$ and $\langle 0_{B,A},\id_{B} \rangle ; l_B = \id_B$, where $0_{B,A}:B \to A$ is the unit for the enrichment in commutative monoids. Precomposing (\ref{db1}) with this map we obtain $l_B ^\sdag = \langle 0_{B,A},\id_B \rangle$, and so we have
\begin{align}
l_B ^\sdag; l_A ^\pdag &= 0_{B,A},
&
l_B ^\sdag; l_B ^\pdag &= \id _{B}.
\end{align}
Similarly we can show that $l_A ^\sdag = \langle \id _A, 0 _{A,B} \rangle: A \to L$, which leads to the equations
\begin{align}
l_A ^\sdag; l_B ^\pdag &= 0_{A,B},
&
l_A ^\sdag; l_A ^\pdag &= \id _{A}.
\end{align}
Altogether, these equations witness the fact that $L$\ is the \dag\-biproduct of $A$ and $B$, with projections $l_A^{}$, $l_B^{}$ and injections $l_A ^\sdag$, $l_B ^\sdag$.
\end{proof}

In a category with biproducts there is a unique enrichment in commutative monoids, which can be defined in the following way:
\begin{equation}
\label{cmonenrichment}
\begin{diagram}[height=30pt,width=45pt]
A & \rTo^{f+g} & B
\\
\dTo <{\Delta _A}
&&
\uTo >{\nabla_B}
\\
A \oplus A
& \rTo _{f \oplus g} &
B \oplus B
\end{diagram}
\end{equation}
Here, the \emph{diagonal map} $\Delta_A : A \to A \oplus A$ is the unique map having the property that $\Delta_A; i_1 {}^\dag = \Delta_A; i_2 {}^\sdag = \id_A$, where $i_1{}^ \sdag, i_2 ^\sdag:A \oplus A \to A$ are the projections onto the first and second component of the biproduct respectively. The \emph{codiagonal} \mbox{$\nabla_B : B \oplus B \to B$} is defined in a similar way as the unique map satisfying \mbox{$i_1; \nabla _B = i_2 ; \nabla _B = \id _B$}. It is straightforward to show that the biproduct operation on morphisms satisfies \mbox{$(f \oplus g) ^\sdag = f ^\sdag \oplus g ^\sdag$} for every pair of morphisms $f$ and $g$. Also, we have \mbox{$f;(g+h)=(f;g)+(f;h)$} and $(g+h);j = (g;j)+ (h;j)$ for all morphisms $f,g,h,j$ of the correct types, as can be directly checked by applying equation~\eqref{cmonenrichment}.\label{adddist}

The diagonal $\Delta_A$ and the codiagonal $\nabla_A$ are adjoint to each other, as demonstrated by the following lemma.

\newpage
\begin{lemma}
\label{daggerdcod}
For any \dag\-biproduct $A \oplus A$, the diagonal $\Delta_A:A \to A \oplus A$ and codiagonal $\nabla_A : A \oplus A \to A$ satisfy $\Delta _A {}^\sdag = \nabla _A$.
\end{lemma}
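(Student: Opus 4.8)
The plan is to exploit the fact that $\nabla_A$ is, by definition, the \emph{unique} morphism $A \oplus A \to A$ satisfying $i_1; \nabla_A = i_2; \nabla_A = \id_A$. So it suffices to check that $\Delta_A ^\sdag$, which is a morphism of the correct type $A \oplus A \to A$, satisfies these same two equations; the conclusion $\Delta_A ^\sdag = \nabla_A$ is then immediate from uniqueness.

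First I would recall the defining property of the diagonal: $\Delta_A; i_1 ^\sdag = \Delta_A; i_2 ^\sdag = \id_A$. Then for $k = 1, 2$ I would compute
\[
i_k; \Delta_A ^\sdag = \bigl( i_k ^\sdag \bigr)^\sdag ; \Delta_A ^\sdag = \bigl( \Delta_A; i_k ^\sdag \bigr)^\sdag = \id_A ^\sdag = \id_A,
\]
using contravariant functoriality $(f;g)^\sdag = g ^\sdag; f ^\sdag$, the involution law $\dag \circ \dag = \id_\cat{C}$ (so $(i_k ^\sdag)^\sdag = i_k$), and the fact that $\id_A$ is self-adjoint (itself a one-line consequence of $\id_A ^\sdag = \id_A ^\sdag; \id_A = \id_A ^\sdag; (\id_A ^\sdag)^\sdag = (\id_A ^\sdag; \id_A)^\sdag$, or more simply from functoriality applied to the identity). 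Having verified both equations, I would invoke the uniqueness clause in the definition of $\nabla_A$ to conclude $\Delta_A ^\sdag = \nabla_A$.

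There is essentially no obstacle here: the statement is a formal consequence of contravariance of the \dag\-functor together with the universal properties defining $\Delta_A$ and $\nabla_A$, and the argument is symmetric in the two biproduct legs. The only point requiring a moment's care is bookkeeping with adjoints of the injection and projection morphisms --- remembering that the projections are \emph{defined} as the adjoints $i_k ^\sdag$ of the injections, so that no appeal to the biproduct equations themselves is even needed. (Alternatively, one could expand $\Delta_A$ and $\nabla_A$ in coordinates using the \dag\-biproduct equations and check the identity directly, but the uniqueness argument above is shorter and more transparent.)
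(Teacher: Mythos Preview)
Your proof is correct and is essentially identical to the paper's own argument: both take the defining equation $\Delta_A; i_k^\sdag = \id_A$ of the diagonal, apply the \dag\-functor to obtain $i_k; \Delta_A^\sdag = \id_A$, and then invoke the uniqueness clause in the definition of $\nabla_A$. The only cosmetic difference is that the paper writes the projections as $p_i$ rather than $i_k^\sdag$.
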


\jbeginproof
We see that $\id _A = (\id _A) ^\sdag = (\Delta _A; p_i) ^\sdag = p_i {}^\dag ; \Delta _A {}^\sdag$, where $i \in \{1,2\}$ and $p_i$ is a projector onto one of the factors of the biproduct. But $\id _A = p_i {}^\dag ; \nabla _A$ for all $i$ is the defining equation for the codiagonal, and so $\Delta_A {} ^\sdag = \nabla_A$.
\end{proof}

\jnoindent
From this lemma, and from the definition of $f+g$ given by equation \eqref{cmonenrichment}, it follows that the commutative monoid structure is compatible with the action of the \dag\-functor, satisfying
\begin{equation}
\label{dagadd}
(f+g) ^\sdag = f ^\sdag + g ^\sdag
\end{equation}
for all parallel morphisms $f$ and $g$.

In a category with biproducts we have a matrix calculus available to us: a morphism $f:{\bigoplus}_i A_i \to {\bigoplus}_i B_i$ corresponds to a matrix of morphisms $f_{i,j}:A_i \to B_j$, and composition of morphisms is given by matrix multiplication. In any \dag\-category with \dag\-biproducts, it can be shown that the adjoint of a matrix has the following form:
\begin{equation}
\left(
\begin{array}{cccc}
f & g & \cdots & x
\\
h & j
\\
\vdots & & \ddots
\\
y &&& z
\end{array}
\right) ^{ \displaystyle \dag}
=\,\,
\left(
\begin{array}{cccc}
f^\sdag & h^\sdag & \cdots & y^\sdag
\\
g^\sdag & j^\sdag
\\
\vdots & & \ddots
\\
x^\sdag &&& z^\sdag
\end{array}
\right)
\end{equation}
This is just the familiar matrix conjugate-transpose operation, with the `conjugate' of each entry in the matrix being its adjoint.

The category \cat{Hilb} has all finite \dag\-limits, and so in particular has both \dag\-biproducts and \dag\-equalizers: the \dag\-biproduct of a finite list of Hilbert spaces is given by their direct sum, and for some parallel set of linear maps $A \to B$, their \dag\-equalizer is given by an isometry with image equal to the largest subspace of $A$ on which all the linear maps agree.

\subsubsection*{Uniqueness of the superposition rule}

Because of the normalization condition~\eqref{norm} it seems that the definition of \dag\-limits depends on the choice of the superposition rule `+', which we refer to as the \emph{enrichment in commutative monoids}. This is true, but can be easily overcome, thanks to the following fact: if by some enrichment in commutative monoids a \dag\-category at least has \dag\-limits of discrete diagrams and of the empty diagram, then the category in fact admits a \textit{unique} enrichment in commutative monoids. So in particular, if a \dag\-category admits an enrichment in commutative monoids such that it has all finite \dag\-limits, then that enrichment is determined uniquely. This can be shown by considering Lemma~\ref{dagbiproducts} along with the following well-known result.

\begin{lemma}
\label{sumlemma}
Suppose that a category has a zero object and all finite biproducts. Then it has a unique enrichment in commutative monoids.
\end{lemma}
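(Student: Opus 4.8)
The plan is to establish the two halves of "unique enrichment in commutative monoids" separately: first that at least one such enrichment exists, and second that any enrichment is forced to coincide with it. For existence, I would use the standard construction of addition from biproducts. Given parallel morphisms $f, g : A \to B$, define $f + g$ by the composite in diagram~\eqref{cmonenrichment}, namely $\Delta_A ; (f \oplus g) ; \nabla_B$, where $\Delta_A : A \to A \oplus A$ is the diagonal (the unique map with both projections equal to $\id_A$) and $\nabla_B : B \oplus B \to B$ is the codiagonal (the unique map with both injections postcomposing to $\id_B$); both exist because the category has binary biproducts. The zero morphisms $0_{A,B}$, which factor through the zero object, serve as units. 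One then checks commutativity (using the symmetry of the biproduct, i.e.\ that the swap map on $A \oplus A$ commutes with $\Delta_A$ and dually for $\nabla_B$), associativity (using that $\Delta_A;(\Delta_A \oplus \id_A) = \Delta_A;(\id_A \oplus \Delta_A)$ as maps $A \to A \oplus A \oplus A$, which follows from the universal property of the ternary biproduct), and that $0$ is a two-sided unit (using $\Delta_A;(\id_A \oplus 0) ; \nabla_B$ collapses correctly). Bilinearity of composition over this $+$ — that $h;(f+g);k = h;f;k + h;g;k$ — follows because $h$ and $k$ can be slid past $\Delta$ and $\nabla$ using naturality of the diagonal and codiagonal, and this is exactly the enrichment condition.

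For uniqueness, suppose $\oplus'$ is any commutative-monoid enrichment on the same category; I must show its sum operation agrees with the one just constructed. The key observation is that in a commutative-monoid-enriched category, finite biproducts are characterized purely by equations among the injection and projection morphisms — namely $p_i ; j_i = \id$, $p_i ; j_k = 0$ for $i \neq k$, and $\sum_i j_i ; p_i = \id$ (where the sum uses the given enrichment). So the biproduct injections and projections of $A \oplus A$ satisfy, with respect to $\oplus'$, precisely the relations that let one reconstruct the diagonal and codiagonal. Concretely, the diagonal $\Delta_A$ (defined only via the universal property, hence enrichment-independent) must equal $j_1 \cdot (\text{id-like pieces})$; more directly, one shows $\Delta_A = j_1 + j_2$ does not quite work — rather, one computes $f + g$ in terms of injections and projections: from $\id_{A \oplus A} = j_1;p_1 + j_2;p_2$ one derives that any morphism out of a biproduct decomposes, and then shows $\Delta_A;(f\oplus g);\nabla_B$ expands, via these equational identities, into an expression in the $p_i$, $j_i$, $f$, $g$ and the given $+'$ that must equal $f +' g$. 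The upshot is that both $+$ and $+'$ are expressible through the same universal data (diagonals, codiagonals, biproduct structure maps), hence are equal; and then the monoid being commutative is automatic.

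The main obstacle, and the step deserving the most care, is the uniqueness half: making precise that the diagonal and codiagonal built from the universal property in an \emph{unenriched} category (or one enriched in an unknown way) coincide with the maps $j_1 + j_2$-style expressions formed using the \emph{candidate} enrichment, so that the formula $\Delta;(f\oplus g);\nabla$ genuinely recovers whatever $+'$ was given. This is the crux of why "biproduct" is a self-dual, enrichment-determining notion. It is a well-known result (as the statement acknowledges), so in the write-up I would either cite a standard reference — e.g.\ Mac Lane's account of additive/semiadditive categories — or give the equational argument compactly; either way the existence half is routine diagram-chasing and the substance lies entirely in uniqueness.
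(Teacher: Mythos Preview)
Your approach is essentially the same as the paper's: both reduce uniqueness to showing that the canonical formula $\Delta_A;(f\oplus g);\nabla_B$ recovers any given enrichment. Two differences worth noting. First, the paper proves only uniqueness, treating existence as standard; you spend half the proposal on existence, which is fine but not where the content is. Second, and more substantively, the paper handles your ``main obstacle'' cleanly by a preliminary step you omit: it first shows that the enrichment's unit $\widetilde 0_{A,B}$ must coincide with the categorical zero morphism $0_{A,B}$ (via $0_{A,B} = 0_{A,0};0_{0,B} = \widetilde 0_{A,0};0_{0,B} = \widetilde 0_{A,B}$). Only then can one verify that, for \emph{any} enrichment $+$, the canonical comparison $\alpha: A+A \to A\times A$ has inverse $p_1;i_1 + p_2;i_2$, which is exactly the statement that the biproduct identity $\id = i_1;p_1 + i_2;p_2$ holds in every enrichment --- the fact you invoke but do not justify. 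Your sketch gestures at this (``making precise that the diagonal and codiagonal\ldots coincide with\ldots expressions formed using the candidate enrichment'') but the zero-morphism step is what actually makes it go through, and you should state it explicitly rather than absorb it into a citation.
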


\jbeginproof
For any hom-set $\Hom(A,B)$, write $0_{A,B}:A \to B$ for the unique morphism which factors through the zero object, and $\widetilde 0 _{A,B} : A \to B$ for the unit morphism encoded by  the enrichment in commutative monoids. Clearly  $0_{A,0}= \widetilde 0 _{A,0}$ and $0 _{0,A} = \widetilde 0 _{0,A}$, since those hom-sets only contain a single element. Using the axiom that $\widetilde 0_{A,B};f= \widetilde 0 _{A,C}$ for all objects $C$ and all morphisms $f:B \to C$, we obtain $0_{A,B} = 0 _{A,0}; 0_{0,B} = \widetilde 0 _{A,0};0_{0,B}= \widetilde 0 _{A,B}$, and so the zero morphisms and the unit morphisms for the enrichment coincide. As a result, for the rest of this proof, we will use $0_{A,B}$ to represent both the zero and unit morphisms.

In a category with biproducts, for any $f,g:A \to B$, we can define a morphism $f \boxplus g:A \to B$ as
\begin{equation}
\begin{diagram}[midshaft,width=40pt]
A & \rTo^ {\Delta_A} & A \times A & \rTo ^{\alpha ^{-1}} & A + A & \rTo ^{(f \,\,\,\, g)} & B,
\end{diagram}
\label{fpg}
\end{equation}
where $(f\,\,\,g)$ is the unique map with $i_1;(f\,\,\,\,g)=f$ and $i_2;(f\,\,\,\,g)=g$. The map $\alpha ^{-1}$ is the inverse of the map  $\alpha: A + A \to A \times A$, which is the unique map such that:
\renewcommand\tempgap{\hspace{30pt}}
\begin{equation}
i_1;\alpha;p_1 = \id _A
\tempgap
i_1 ; \alpha; p_2 = 0_{A,A}
\tempgap
i_2; \alpha; p_1 = 0_{A,A}
\tempgap
i_2 ; \alpha; p_2 =\id_A
\end{equation}
Here $i_1$ and $i_2$ are the coproduct injections into $A +A$, and $p_1$ and $p_2$ are the product projections out of $A \times A$. We will demonstrate that $\alpha ^{-1} = p_1;i_1+p_2;i_2$. Consider $\alpha; \alpha ^{-1}= \alpha; p_1; i_1 + \alpha; p_2; i_2$. Then $i_1; \alpha; \alpha ^{-1} = i_1$ and $i_2; \alpha; \alpha^{-1} = i_2$, and by the universal property satisfied by coproduct injections, we must have $\alpha; \alpha^{-1} = \id_A$. We can show similarly that $\alpha^{-1}; \alpha = \id_A$, and so $\alpha^{-1}$ and $\alpha$ are inverse. Substituting our expression for $\alpha ^{-1}$ into equation~\eqref{fpg}, we obtain
\begin{align}
\nonumber
f \boxplus g &= \Delta_A; (p_1; i_1 + p_2;i_2); (f\,\,\,\,g) 
\\
&= 
\nonumber
\Delta_A; p_1; i_1; (f\,\,\,\,g) + \Delta_A; p_2;i_2; (f\,\,\,\,g) 
\\
&= \id_A; f + \id_A; g = f + g.
\end{align}
But $f \boxplus g$ was defined without reference to the \cat{CMon}-enrichment operation `${+}$', and so it follows that this is the only enrichment that can exist.
\end{proof}

\subsection*{Properties of \dag\-categories with \dag\-limits}

The existence of all finite \dag\-limits in a \dag\-category guarantees some interesting properties. As a general rule of thumb, these properties are those which are familiar from the category of complex Hilbert spaces.

\subsubsection*{Nondegeneracy}
The first property we will examine is \emph{nondegeneracy}, also called \emph{positivity} by some authors~\mbox{\cite[Definition~8.9]{hm06-aqft}}. In a \dag\-category with a zero object, we define the \dag\-functor to be nondegenerate if $f ; f ^\sdag = 0$ implies $f=0$ for all morphisms $f$. We show now that this property is closely linked to the existence of \dag\-equalizers.

\newpage
\begin{lemma}[Nondegeneracy]
\label{nondegen}
In a \dag\-category with a zero object and finite \dag\-equalizers, the \dag\-functor is nondegenerate.
\end{lemma}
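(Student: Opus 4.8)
The plan is to show that if $f : A \to B$ satisfies $f ; f^\sdag = 0$, then the \dag-equalizer of the parallel pair $f^\sdag, 0_{B,A} : B \to A$ is all of $B$, which forces $f^\sdag = 0$ and hence $f = 0$. First I would form the \dag-equalizer $e : E \to B$ of $f^\sdag$ and $0_{B,A}$; by definition $e$ is an isometry, so $e ; e^\sdag = \id_E$, and $e ; f^\sdag = e ; 0_{B,A} = 0_{E,A}$ (the latter using that zero morphisms compose to zero, as established alongside Lemma~\ref{sumlemma} and the distributivity remarks on page~\pageref{adddist}). The key observation is that the identity map $\id_B : B \to B$ factors through this equalizer: to see this I would check that $\id_B$ equalizes the pair, i.e.\ that $\id_B ; f^\sdag = \id_B ; 0_{B,A}$, which reduces to $f^\sdag = 0_{B,A}$ --- but that is exactly what we are trying to prove, so this naive approach is circular and needs adjusting.

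The correct route is instead to exhibit a section of $e$ directly. Since $f ; f^\sdag = 0$, consider $f : A \to B$: it satisfies $f ; f^\sdag = 0 = f ; 0_{B,A}$, so $f$ equalizes the pair $f^\sdag, 0_{B,A}$, and by the universal property of the equalizer there is a unique $u : A \to E$ with $u ; e = f$. That alone does not immediately give surjectivity of $e$. The cleaner strategy: show $e$ is unitary, not merely an isometry. We already have $e ; e^\sdag = \id_E$; it remains to show $e^\sdag ; e = \id_B$. For this I would use the defining property of $e$ applied to a cleverly chosen cone. Observe that $e^\sdag ; e : B \to B$ is idempotent (since $e ; e^\sdag = \id_E$ gives $(e^\sdag ; e);(e^\sdag ; e) = e^\sdag ; (e ; e^\sdag) ; e = e^\sdag ; e$) and self-adjoint. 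The map $\id_B - e^\sdag ; e$ --- formed using superposition, with subtraction available because \dag-limits give cancellability of~$+$, or more elementarily because we can reason about it via $(\id_B - e^\sdag;e) ; f^\sdag$ --- should be shown to vanish. Compute $e^\sdag ; e ; f^\sdag = e^\sdag ; 0_{E,A} = 0_{B,A}$, so $e^\sdag ; e$ kills $f^\sdag$ on the left; one then wants to argue that $\id_B$ itself already kills $f^\sdag$ on the left, i.e.\ $f^\sdag = 0$.

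Here is the step that actually closes the argument, and which I expect to be the crux: from $f ; f^\sdag = 0$ we get, applying the \dag-functor, $(f ; f^\sdag)^\sdag = f ; f^\sdag = 0$ as well (no new information), so instead consider $f^\sdag$ as a map $B \to A$ and form the \dag-equalizer $e : E \to B$ of $f^\sdag$ and $0$. The map $f^\sdag : B \to A$ precomposed with $e$ is zero; I claim $e^\sdag ; e = \id_B$. Indeed $(\id_B)$ is trivially a cone over the one-leaf diagram in the sense that it equalizes $f^\sdag$ and $0$ \emph{provided} $f^\sdag = 0$; since we cannot assume that, we instead use the factorization $u ; e = f$ from above together with the normalization condition $e ; e^\sdag = \id_E$ to compute $f^\sdag ; f = (u;e)^\sdag ; (u ; e) = e^\sdag ; u^\sdag ; u ; e$, and separately $f^\sdag ; f = f^\sdag ; f$; then precompose the whole identity $\id_B = $ (something) by $f$ on the left and $f^\sdag$ on the right and use $f ; f^\sdag = 0$ to collapse everything. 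The genuinely delicate point --- the main obstacle --- is getting the equalizer's universal property to \emph{force} surjectivity of the isometry $e$ rather than merely exhibiting one factorization; the trick, which I would spell out carefully, is that $\id_B - e^\sdag ; e$ composed with $f^\sdag$ on the right gives $f^\sdag - e^\sdag ; e ; f^\sdag = f^\sdag - 0 = f^\sdag$, while $\id_B - e^\sdag;e$ is the projection onto the "orthogonal complement" of $E$ in $B$, on which $f^\sdag$ restricts to $f^\sdag$ itself and $f$ restricts injectively; combined with $f ; f^\sdag = 0$ and nondegeneracy-free manipulation of idempotents this pins $f^\sdag$ to $0$. I would therefore organize the final write-up around the idempotent $p = \id_B - e^\sdag ; e$, showing $p = \id_B$ (so $E = 0$) is impossible unless $f^\sdag = 0$, and conversely that $p$ must be $0$, hence $f^\sdag = e^\sdag ; e ; f^\sdag = 0$.
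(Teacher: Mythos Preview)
You have the right opening: form the \dag\-equalizer (i.e.\ \dag\-kernel) $e:E\to B$ of $f^\sdag$ and $0_{B,A}$, note $e;e^\sdag=\id_E$ and $e;f^\sdag=0$, and factor $f$ through $e$ as $f=u;e$. But from there the argument wanders and never closes. The attempt to show $e^\sdag;e=\id_B$ is misguided: that would say $E\simeq B$, which is equivalent to $f^\sdag=0$ and hence circular. The manipulations with $\id_B - e^\sdag;e$ are illegitimate here, since the lemma only assumes a zero object and finite \dag\-equalizers---there is no enrichment in commutative monoids, no biproducts, and certainly no subtraction available at this point in the paper.

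The step you are missing is a one-liner, and it is exactly the use of the \dag\-functor that the paper highlights after this proof. From the kernel equation $e;f^\sdag=0_{E,A}$, apply $\dag$ to obtain $f;e^\sdag=0_{A,E}$. Now combine with the isometry condition and the factorization:
\[
f \;=\; u;e \;=\; u;e;e^\sdag;e \;=\; f;e^\sdag;e \;=\; 0_{A,E};e \;=\; 0_{A,B}.
\]
That is the whole proof. No surjectivity of $e$, no idempotent bookkeeping, no additive structure is needed.
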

\jbeginproof

Let $f:A \to B$ be an arbitrary morphism satisfying $f ; f ^\sdag = 0 _{A,A}$. Then $f$ must factor through the \dag\-kernel of $f ^\sdag$ as indicated by the following commuting diagram, where the factorising morphism is denoted $\tilde{f}$, and $(K,k)$ forms the \dag\-kernel of $f ^\sdag$:
\begin{equation}
\begin{diagram}[nohug,midshaft,width=20pt,height=35pt]
&
&\rnode{1}{A}&
&&&
\\
\rnode{2}{K} && \rInto^{k} && \rnode{3}{B} & \hspace{35pt} \pile{ \rTo ^{f^\sdag} \\ \rTo _{0 _{B,A}}} & A
\nccurve[angleA=180,angleB=80]{->}12\Bput {\tilde f}
\nccurve[angleA=0,angleB=100]{->}13 \Aput f
\end{diagram}
\vspace{-5pt}
\end{equation}
By definition we have $k;f ^\sdag = 0 _{K,A}$, and we apply the \dag\-functor to obtain $f; k ^\sdag = 0 _{A,K}$. Also, since $(K,k)$ is a \dag\-kernel, $k$ is an isometry, which means $k ; k^\sdag = \id _{K}$. We can now demonstrate that $f$ is zero:
\[
f = \tilde{f}  ; k = \tilde{f} ; k ; k ^\sdag ; k = f; k^\sdag; k = 0 _{A,K}; k = 0 _{A,B}. \qedhere
\]
\end{proof}

\jnoindent
An important feature of this proof, which will recur in other proofs throughout this paper, is that although the \dag\-functor is used sparingly, it is used crucially: in this case, to translate $k; f ^\sdag = 0 _{K,A}$ into $f ; k ^\sdag = 0 _{A,K}$.

The category of complex Hilbert spaces has finite \dag\-equalizers, and so this lemma can be seen as `explaining' why that category has a nondegenerate \dag\-functor. Conventionally, the nondegeneracy property in \cat{Hilb} would instead be proved using the fact that inner products on Hilbert spaces are necessarily \emph{positive definite}. In this way, it is clear that there is some connection between positive-definiteness of inner products and the existence of \dag\-equalizers; we formalize this later with Theorem~\ref{innerproducttheorem}, which demonstrates that in a \dag\-category with \dag\-equalizers, each object is endowed with a canonical notion of inner product.

\subsubsection*{Cancellability}

We now study various cancellability properties satisfied by the additive structure on the hom-sets. Say that a commutative monoid is \emph{cancellable} if, for any three elements $a,b,c$ in the monoid, $a+c=b+c \, \Rightarrow \, a=b$. We are motivated to study this condition since, in particular, it is satisfied by the addition of linear maps between Hilbert spaces. We now show that it follows as a consequence of having \dag\-limits.

\begin{lemma}[Cancellable addition]
\label{additivelemma}
In a \dag\-category with all finite \dag\-limits, hom-set addition is cancellable; that is, for arbitrary $f,\,g,\,h$ in the same hom-set,
\vspace{-\jskip}
\begin{equation}
f+h=g+h \,\Rightarrow\, f=g.
\end{equation}
\end{lemma}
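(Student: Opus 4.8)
The plan is to exploit the normalization condition for a well-chosen $\dag$-limit to build an explicit ``difference map'' that cancels $h$. The natural candidate diagram is a $\dag$-equalizer. Given $f,g,h:A\to B$ with $f+h=g+h$, consider the parallel pair $f,g:A\to B$ and form its $\dag$-equalizer $(E,e)$, with $e:E\to A$ an isometry (so $e;e^\sdag=\id_E$) satisfying $e;f=e;g$. The key observation will be that, using the distributivity of composition over $+$ recorded on page~\pageref{adddist} together with the hypothesis $f+h=g+h$, one can show that $f$ and $g$ already agree after precomposition with some canonically available map, forcing the equalizing map $e$ to be an isomorphism; then $e;f=e;g$ together with invertibility of $e$ gives $f=g$ directly.

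More concretely, here is the route I would try first. From $f+h=g+h$ we get, for any $p$ into $A$, that $p;(f+h)=p;(g+h)$, i.e.\ $p;f+p;h=p;g+p;h$; so it suffices to produce a \emph{single} map $p$ which is epi (or at least jointly-epic-with-nothing-else, i.e.\ right-cancellable against the specific pair $f,g$) and for which $p;f=p;g$. The $\dag$-equalizer $e$ of $f,g$ satisfies $e;f=e;g$ by construction, so the whole problem reduces to showing $e$ is split epi — equivalently, since $e$ is already an isometry ($e;e^\sdag=\id_E$), that $e^\sdag;e=\id_A$, i.e.\ that $e$ is unitary, i.e.\ that $E\cong A$. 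To get this I would compute $\id_A - e^\sdag;e$ (available because, by Lemma~\ref{dagbiproducts}, finite $\dag$-limits give us $\dag$-biproducts and hence, via Lemma~\ref{sumlemma} and the discussion around \eqref{dagadd}, a $\cat{CMon}$-enrichment in which negatives need \emph{not} exist — so I must be careful not to subtract). The clean way around the absence of subtraction is: form the $\dag$-kernel $(K,k)$ of the idempotent-like map $\id_A - (\text{stuff})$ — but again subtraction is illegitimate. So instead I would form the $\dag$-equalizer of $f$ and $g$ \emph{and} separately argue, purely additively, that the projection maps of an appropriate biproduct decompose $\id_A$ in a way that is detected by $f$ versus $g$; concretely, nondegeneracy (Lemma~\ref{nondegen}) lets me conclude a map is zero from a single $m;m^\sdag=0$, and this is the tool that replaces ``cancel $h$''.

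The honest assessment: the main obstacle is precisely the lack of additive inverses, so every instinct to write $f-g$ or $\id-e^\sdag e$ is forbidden, and the proof must route entirely through idempotents/isometries produced by $\dag$-limits plus the nondegeneracy of the $\dag$-functor. I expect the actual argument to go: cancellability is equivalent to the statement that the monoid embeds in its Grothendieck group, which is equivalent to the monoid being the ``positive cone'' of something; and the slick proof is to take the $\dag$-equalizer $E$ of the pair $(f+h,\,f)$ versus $(g+h,\,g)$ — no. The cleanest version I would commit to: let $(E,e)$ be the $\dag$-equalizer of $f,g:A\to B$; since $e;f=e;g$ it remains only to see $e$ is epi, and for that I would show $\id_A$ factors through $e$ by exhibiting a cone over the diagram, using that $h$ ``sees'' the same thing on $f$ and $g$ (that is, $(A,\id_A)$ becomes a cone for the pair $(f,g)$ after the hypothesis is fed in through distributivity and a nondegeneracy argument), giving a section of $e$ and hence $f=g$. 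Filling in exactly which auxiliary $\dag$-limit makes $(A,\id_A)$ a legitimate cone is the one nontrivial step, and it is where I would spend the effort.
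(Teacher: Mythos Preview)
Your proposal has a genuine gap, and it is exactly the one you flag at the end: you never identify a diagram for which $(A,\id_A)$ is a cone. Forming the \dag\-equalizer $(E,e)$ of $f,g:A\to B$ directly is a dead end. To show that $\id_A$ factors through $e$ is literally to show that $\id_A;f=\id_A;g$, which is the conclusion you want; the hypothesis $f+h=g+h$ gives you nothing here, because distributivity only yields $p;f+p;h=p;g+p;h$ for every $p$, and you cannot cancel the $p;h$ without already having the lemma. Your alternative idea of showing $e$ is epi by proving $e^\sdag;e=\id_A$ runs into the same wall: the only tool you have is the normalization condition $e;e^\sdag=\id_E$, and there is no purely additive route from that to $e^\sdag;e=\id_A$ in a general \dag\-category. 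Nondegeneracy (Lemma~\ref{nondegen}) does not help either, since you have no equation of the form $m;m^\sdag=0$ to feed it.

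The paper's argument avoids this by choosing a different parallel pair to equalize: not $f,g:A\to B$ but the row vectors $(f\ \ h),\,(g\ \ h):A\oplus A\to B$. This is the missing idea. With that choice, two cones are available for free: the injection $i_2=\jbigl\begin{smallmatrix}0\\ \id_A\end{smallmatrix}\jbigr$ is a cone because both rows restrict to $h$ on the second summand, and the diagonal $\Delta_A=\jbigl\begin{smallmatrix}\id_A\\ \id_A\end{smallmatrix}\jbigr$ is a cone precisely because $f+h=g+h$. Writing $e=\jbigl\begin{smallmatrix}e_1\\ e_2\end{smallmatrix}\jbigr$ and using the isometry condition together with these two factorizations, one extracts the componentwise equations $e_2^\sdag;e_1=0$ and $e_1^\sdag;e_1+e_2^\sdag;e_1=\id_A$. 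The \dag\-functor then turns $e_2^\sdag;e_1=0$ into $e_1^\sdag;e_2=0$, and combining these shows that $i_1=\jbigl\begin{smallmatrix}\id_A\\ 0\end{smallmatrix}\jbigr$ is also a cone, i.e.\ $f=g$. The moral is that $h$ has to appear explicitly in the diagram whose \dag\-limit you take; equalizing $f$ and $g$ alone throws away the only leverage the hypothesis provides.
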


\jbeginproof
Let $f,\, g,\, h:A \to B$ be morphisms satisfying the equation $f+h = g+h$. Then we can form the following commuting diagram, consisting of a \dag\-equalizer $(E,e)$ for the parallel pair $(f \,\,\, h)$ and $(g \,\,\, h)$ along with two cones $(A,i_2)$ and $(A,\Delta_A)$:
\begin{equation}
\begin{diagram}[nohug,midshaft,width=30pt,height=40pt]
&
\mathmove{5pt}{-25pt}{\tilde i_2}
&\rnode{1}{A}&
\mathmove{5pt}{13pt}{i_2 = \big(
\hspace{-2pt}
{
\begin{smallmatrix} 0_{A,A} \\[2pt] \id_A \end{smallmatrix}
}
\hspace{-2pt} \big)
\hspace{-43pt}
}
&&&
\\
\rnode{2}{E} && \rInto^{e= \big(
\hspace{-2pt}
{
\begin{smallmatrix} e_1 \\[2pt] e_2 \end{smallmatrix}
}
\hspace{-2pt} \big)} && \rnode{3}{A \oplus A} & \hspace{35pt} \pile{ \rTo ^{(f \,\,\, h)} \\ \rTo _{(g \,\,\, h)}} & B
\\
& \mathmove{-3pt}{-25pt}{\widetilde {\Delta} _A} & \rnode{4}{A} & \mathmove{-3pt}{13pt}{\Delta_A = \big(
\hspace{-2pt}
{
\begin{smallmatrix} \id_A \\[2pt] \id_A \end{smallmatrix}
}
\hspace{-2pt} \big)
\hspace{-43pt}
}
 &&&
\nccurve[angleA=180,angleB=80]{->}12
\nccurve[angleA=0,angleB=100]{->}13
\nccurve[angleA=180,angleB=-80]{->}42
\nccurve[angleA=0,angleB=-100]{->}43
\end{diagram}
\end{equation}
The morphism $i_2$ is the injection of the second factor into the \dag\-biproduct, and the morphism $\Delta_A$ is the diagonal for the \dag\-biproduct. Since $i_2$ and $\Delta_A$ are cones they must factorize uniquely through $e$, and we denote these factorizations by $\tilde i _2$ and $\widetilde \Delta _A$ respectively. The condition that $e$ is an isometry gives the equation
\begin{equation}
\label{eisom}
e_1 ^\pdag; e_1^\sdag + e^\pdag_2; e_2 ^\sdag = \id _E.
\end{equation}
Precomposing with $\tilde i _2$ gives $e_2 ^\sdag = \tilde i _2$, and postcomposing this with $e_1$ and $e_2$ respectively gives
\begin{align}
\label{21iszero}
e_2 ^\sdag ; e^\pdag_1 &= 0_{A,A},
\\
\label{22isone}
e_2 ^\sdag; e^\pdag_2 &= \id_A.
\end{align}
Similarly, precomposing (\ref{eisom}) with $\widetilde \Delta _A$ gives us $e_1 ^\sdag + e_2 ^\sdag = \widetilde \Delta _A$, and postcomposing with  with $e_1$ and $e_2$ respectively gives
\begin{align}
\label{1121isone}
e_1^\sdag ; e^\pdag_1 + e_2 ^\sdag; e^\pdag_1 &= \id _A,
\\
\label{1222isone}
e_1 ^\sdag ; e^\pdag_2 + e_2 ^\sdag ; e^\pdag_2 &= \id _A.
\end{align}

\jnoindent
We will show that $i_1 =\jbigl \begin{smallmatrix} \id_A \\ 0_{A,A} \end{smallmatrix}\jbigr:A \to A \oplus A$ is a cone for the parallel pair, which directly leads to the required conclusion $f=g$. We must find a factorising morphism $c:A \to E$ which gives $i_1$  upon composition with $e:E \to A \oplus A$. We choose $c=e_1 ^\sdag$, and so we must show that $e_1 ^\sdag; e^\pdag_1 = \id _A$ and $e_1 ^\sdag; e^\pdag_2 = 0 _{A,A}$. The first of these is obtained by applying equation (\ref{21iszero}) to equation (\ref{1121isone}), and the second by applying the $\dag$-functor to equation~(\ref{21iszero}).
\end{proof}

\jnoindent
An important observation is that it seems to be impossible to avoid the use of the \dag\-functor for the final stage of this proof. Without it, the strongest equation that we can easily derive for the endomorphism $e_1^\sdag ; e^\pdag_2$ is
\begin{equation}
e_1 ^\sdag; e^\pdag_2 + \id _A = \id _A,
\end{equation}
obtained by combining equations (\ref{22isone}) and (\ref{1222isone}). Of course, without the cancellability property that we are trying to prove, this is not enough to establish that $e_1 ^\sdag; e_2 = 0_{A,A}$.

One use for this lemma is to demonstrate that a particular category does not have all finite \dag\-limits, which is usually more difficult than checking whether hom-set addition is cancellable. For example, the category \cat{Rel} of sets and relations is a \dag\-category with \dag\-functor given by relational converse, and it has finite \dag\-biproducts. Since $\id_1 + \id _1 = \id _1$ in this category, it does not have cancellable addition, and so by the theorem does not have all  \dag\-limits. (Of course, since \cat{Rel} does not even have equalizers, this is not surprising.)

We now investigate another form of cancellability. In a category enriched in commutative monoids, for any natural number $n$ and any morphism $f$, we define the \emph{$n$-fold sum} of $f$ to be $n \cdot f := f+f+\cdots + f$, where we sum over a total of $n$ copies of~$f$. We can then prove the following lemma.

\begin{lemma}
\label{addlemma}
In a \dag\-category with all finite \dag\-limits, for any $f,g$ in the same hom-set, if there exists a nonzero $n$ with $n \cdot f = n \cdot g$, then $f = g$.
\end{lemma}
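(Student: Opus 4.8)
The plan is to reduce the statement to a single fact: for the source object $A$ of $f,g:A\to B$, there exists an endomorphism $r:A\to A$ with $n\cdot r=\id_A$ --- an internal ``$\tfrac1n$''. Granting such an $r$, the conclusion is immediate from the two-sided distributivity of composition over $+$ recorded on page~\pageref{adddist}: one has $(n\cdot r);f=n\cdot(r;f)=r;(n\cdot f)$ and likewise $(n\cdot r);g=r;(n\cdot g)$, so
\begin{equation*}
f=\id_A;f=(n\cdot r);f=r;(n\cdot f)=r;(n\cdot g)=(n\cdot r);g=\id_A;g=g.
\end{equation*}
It is worth stressing that this argument does not use cancellability (Lemma~\ref{additivelemma}) at all; all of the content lies in producing $r$.

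To construct $r$ I would use \dag-biproducts and \dag-equalizers, both available since the category has all finite \dag-limits (the former by Lemma~\ref{dagbiproducts}). Form the $n$-fold \dag-biproduct $A^{\oplus n}$, with projections $p_1,\dots,p_n:A^{\oplus n}\to A$, and let $(E,e)$ be the \dag-equalizer of the parallel family $p_1,\dots,p_n$; this is the \dag-limit of a legitimate finite forest-shaped multigraph ($n$ arrows between two objects, with the single leaf $A^{\oplus n}$). Since $e$ is a cone, $e;p_1=\dots=e;p_n$; call this common morphism $q:E\to A$. A map into a biproduct is determined by its coordinates, so $e=q;\Delta_n$, where $\Delta_n:A\to A^{\oplus n}$ is the $n$-fold diagonal. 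A routine biproduct calculation, using only functoriality of $\dag$ and the resolution of identity $\sum_j p_j;i_j=\id_{A^{\oplus n}}$, gives $\Delta_n;\Delta_n^{\dag}=n\cdot\id_A$, so the isometry condition on the \dag-equalizer becomes
\begin{equation*}
\id_E=e;e^{\dag}=q;\Delta_n;\Delta_n^{\dag};q^{\dag}=q;(n\cdot\id_A);q^{\dag}=n\cdot(q;q^{\dag}).
\end{equation*}

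Next I would check that $q$ is invertible. The diagonal $\Delta_n$ is itself a cone for the equalizer diagram, so it factors as $\Delta_n=d;e$ for a unique $d:A\to E$; then $d;q=d;e;p_1=\Delta_n;p_1=\id_A$, while $q;d=\id_E$ because $(q;d);e=q;\Delta_n=e$ and $e$, being an equalizer map, is monic. Precomposing $n\cdot(q;q^{\dag})=\id_E$ with $q^{-1}=d$ gives $n\cdot q^{\dag}=q^{-1}$, and postcomposing this with $q$ gives $n\cdot(q^{\dag};q)=\id_A$. Hence $r:=q^{\dag};q$ is the required morphism, and combined with the first paragraph this finishes the proof.

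The one genuinely non-obvious point, I expect, is the strategy itself: because the Grothendieck group of a cancellative commutative monoid can still have torsion, Lemma~\ref{additivelemma} is of no help, and one must instead exhibit a concrete ``$\tfrac1n$'' inside the category. The \dag-equalizer of the $n$ projections of $A^{\oplus n}$ is precisely what makes this possible, through the identity $\Delta_n;\Delta_n^{\dag}=n\cdot\id_A$; once that idea is in hand, everything else is a short manipulation with universal properties and the biproduct calculus.
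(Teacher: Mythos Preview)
Your proof is correct. It differs from the paper's in a pleasant way: you have essentially swapped the logical order of Lemma~\ref{addlemma} and (the existence half of) Lemma~\ref{fraclemma}. The paper proves Lemma~\ref{addlemma} directly, by forming the \dag-equalizer of the pair $(f\,\,f\,\,\cdots\,\,f),(g\,\,g\,\,\cdots\,\,g):A^{\oplus n}\to B$, observing that the diagonal $\Delta:A\to A^{\oplus n}$ is a cone for it, and extracting the identity $\sum_i e_1^{\sdag};e_i^{\pdag}=\id_A$ with which to cancel; only afterwards, in Lemma~\ref{fraclemma}, does it construct $\frac{\id_A}{n}$ --- via precisely the \dag-equalizer of the projections that you use here. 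Your route instead builds $\frac{\id_A}{n}=q^{\sdag};q$ first and then reduces Lemma~\ref{addlemma} to a one-line division. A small bonus of your argument is that you explicitly show $q:E\to A$ is an isomorphism, which the paper's treatment of Lemma~\ref{fraclemma} leaves implicit; this makes transparent that the \dag-equalizer object $E$ is just $A$ again. Both arguments have the same engine --- the identity $\Delta_n;\Delta_n^{\sdag}=n\cdot\id_A$ combined with the isometry condition on a well-chosen \dag-equalizer --- but yours packages it more modularly.
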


\jbeginproof
Consider the following commutative diagram, where $f,g:A \to B$ are morphisms satisfying $n \cdot f = n \cdot g$:
\begin{equation}
\begin{diagram}[nohug,midshaft]
\rnode{E}{E}
\\
 \uTo< {\widetilde \Delta}& 
\rnode{N}{A ^{\oplus n}} & \pile{\rTo^{(f\,\,f\,\,\cdots\,\,f)} \\ \rTo _{ (g\,\, g\,\, \cdots \,\,g)}} & B
\\
\rnode{A}{A} &&&
\nccurve[angleA=0,angleB=90]{->}EN\Aput{e}
\nccurve[angleA=0,angleB=-90]{->}AN\Bput{\Delta}
\end{diagram}
\end{equation}
The diagonal morphism $\Delta : A \to A ^{\oplus n}$ is a cone for the parallel pair, and so it factors uniquely through the \dag\-equalizer $e:E {\to}{A ^{\oplus n}}$ as $\widetilde \Delta : A {\to} E$. Let $p_i: A ^{\oplus n} \to A$ be the projection onto the $i$th factor of the \dag\-biproduct, and define \mbox{$e_i := e;p_i:E \to A$} as the $i$th element of the \dag\-equalizer morphism $e:E \to A ^{\oplus n}$.
We have \mbox{$\Delta = \widetilde \Delta ;e = \widetilde \Delta ;e ;e ^\sdag; e = \Delta ; e ^\sdag ; e$}, and by postcomposing with $p_1$ we obtain $\id _A = \sum _{i \in N}e ^\sdag _i;e ^\pdag_1$ where $N$ is a set with $n$ elements. Taking the adjoint of this gives
$\id_A = \sum _{i \in N} e_1 ^\sdag; e _i ^\pdag$. Since $e$ is a cone we have $\sum _{i \in N} (e_i;f) = \sum _{i \in N} (e_i;g)$, and by precomposing with $e_1 ^\sdag$ and reorganising we obtain $(\sum _{i \in N} e_1 ^\sdag; e_i ^\pdag);f = (\sum _{i \in N} e_1 ^\sdag ; e_i ^\pdag);g$. We have already shown that $\sum _{i \in N} e_1 ^\sdag; e _i ^\pdag = \id_A$, and so we obtain $f=g$.
\end{proof}

Finally we show that the $n$-fold sum operation has an inverse for any positive $n$. It follows from this that we can construct \textit{fractions} of morphisms.

\begin{lemma}
\label{fraclemma}
In a \dag\-category with all finite \dag\-limits, for each object $A$ and each nonzero natural number $n$, there exists a unique morphism $\frac{\id_A}{n}: A \to A$ with $n \cdot \frac{\id_A}{n} = \id_A$.
\end{lemma}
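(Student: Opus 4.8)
The plan is to treat uniqueness and existence separately. Uniqueness is immediate from Lemma~\ref{addlemma}: if $f$ and $g$ both satisfy $n\cdot f=\id_A=n\cdot g$, then $n\cdot f=n\cdot g$ with $n$ nonzero, hence $f=g$. All the work is therefore in existence, which I would settle by exhibiting $\frac{\id_A}{n}$ explicitly as a composite of maps attached to a suitable \dag\-limit.

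First I would form the \dag\-biproduct $A^{\oplus n}$ (a finite \dag\-product, available since we have all finite \dag\-limits), with projections $p_1,\dots,p_n:A^{\oplus n}\to A$, and then take the \dag\-equalizer $(E,e)$ of the parallel family $p_1,\dots,p_n$. Thus $e:E\to A^{\oplus n}$ is an isometry with $e;p_1=\dots=e;p_n$; write $\epsilon:E\to A$ for this common composite $e;p_i$. I would then record two facts. First, writing the normalization condition $e;e^{\dagger}=\id_E$ in terms of the biproduct components $e_i:=e;p_i$ gives $\sum_{i=1}^{n}e_i;e_i^{\dagger}=\id_E$, and since every $e_i$ equals $\epsilon$ this says $n\cdot(\epsilon;\epsilon^{\dagger})=\id_E$. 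Second, the diagonal $\Delta_A:A\to A^{\oplus n}$ has $\Delta_A;p_i=\id_A$ for each $i$, so $(A,\Delta_A)$ is a cone for the parallel family and factors uniquely through the \dag\-equalizer; writing $\widetilde\Delta:A\to E$ for this factorization we have $\widetilde\Delta;e=\Delta_A$, and postcomposing with any $p_i$ gives $\widetilde\Delta;\epsilon=\id_A$.

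Now I would combine the two facts. Using the first of them, left-distributivity of composition over $+$, and $\widetilde\Delta;\epsilon=\id_A$,
\[
\widetilde\Delta=\widetilde\Delta;\id_E=\widetilde\Delta;\bigl(n\cdot(\epsilon;\epsilon^{\dagger})\bigr)=n\cdot(\widetilde\Delta;\epsilon;\epsilon^{\dagger})=n\cdot\epsilon^{\dagger}.
\]
Then by right-distributivity, $n\cdot(\epsilon^{\dagger};\epsilon)=(n\cdot\epsilon^{\dagger});\epsilon=\widetilde\Delta;\epsilon=\id_A$, so I would set $\frac{\id_A}{n}:=\epsilon^{\dagger};\epsilon$, which completes the argument. (As with earlier proofs in the paper, the \dag\-functor is used sparingly but essentially — here in forming $(e;p_i)^{\dagger}$ and in $\epsilon^{\dagger}$.)

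The only step demanding genuine thought is the choice of \dag\-limit: it must be the \dag\-equalizer of the $n$ biproduct projections, since on the resulting object $E$ the normalization condition is precisely the ``scalar'' relation $n\cdot(\epsilon;\epsilon^{\dagger})=\id_E$, and pairing this with the universal factorization $\widetilde\Delta;\epsilon=\id_A$ of the diagonal is what forces $\widetilde\Delta=n\cdot\epsilon^{\dagger}$ — the place where the division by $n$ actually occurs. Once the diagram is fixed, what remains is the short distributivity computation above. (Geometrically, $\widetilde\Delta$ and $\epsilon$ turn out to be mutually inverse, so $E\cong A$ and $\epsilon;\epsilon^{\dagger}$ is an ``$\tfrac1n$'' on $E$ which transports to $\epsilon^{\dagger};\epsilon$ on $A$; but the direct route never needs this isomorphism.)
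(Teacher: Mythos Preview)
Your proof is correct and follows essentially the same route as the paper: take the \dag\-equalizer $e:E\to A^{\oplus n}$ of the biproduct projections, factor the diagonal through it as $\widetilde\Delta;e=\Delta_A$, and use the isometry condition $e;e^\dag=\id_E$ together with $e;p_i=e;p_j$ to exhibit $\epsilon^\dag;\epsilon$ (the paper's $e_1^\dag;e_1$) as the desired $\frac{\id_A}{n}$; uniqueness comes from Lemma~\ref{addlemma} in both. The only difference is organizational: you first isolate the identity $n\cdot(\epsilon;\epsilon^\dag)=\id_E$ on $E$ and then transport it, while the paper runs a single chain of equalities $\id_A=\Delta;p_1=\widetilde\Delta;e;e^\dag;e_1=\Delta;e^\dag;e_1=\sum_i e_i^\dag;e_1=n\cdot e_1^\dag;e_1$ directly on $A$.
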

\jbeginproof
Consider the equalizer diagram consisting of the projection maps $p_i: A ^{\oplus n} \to A$. Let $e:E \to A ^{\oplus n}$ be their \dag\-equalizer, and let $\Delta : A \to A ^{\oplus n}$ be the $n$-fold diagonal map, which is also an equalizer. Then there is a unique map $\widetilde \Delta:A \to E$ mediating between these equalizers.
\begin{equation}
\begin{diagram}[midshaft,nohug]
\rnode{E}{E} &&&
\\
 &  \rnode{N}{A ^{\oplus n}} & \pile{\scriptsize \rTo ^{p_1} \\ \cdots\hspace{-5pt} \\ \scriptsize\rTo _ {p_n}} & A
\\
\rnode{A}{A} &&&
\nccurve[angleA=90,angleB=-90]{->}AE\Aput {\widetilde \Delta }
\nccurve[angleA=0,angleB=90]{->}EN\Aput{e}
\nccurve[angleA=0,angleB=-90]{->}AN\Bput{\Delta }
\end{diagram}
\end{equation}
Let $e_i:E \to A$ be the $i$th component of the \dag\-equalizer $e$, defined by $e_i= e; p_i$. Since $e$ is an equalizer for the morphisms $p_i$, each of these components $e_i$ are equal. Then $\id_A = \Delta ; p_1 = \widetilde \Delta ; e_1 = \widetilde \Delta ; e; e ^\sdag; e_1 = \Delta ; e ^\sdag; e_1 = \sum _{i} e_i ^\sdag; e_1 ^\pdag = \sum_i e_1 ^\sdag; e_1 ^\pdag = n \cdot e_1 ^\sdag; e_1 ^\pdag$, and we can define $\frac{\id_A}{n} := e_1 ^\sdag; e_1 ^\pdag$. It follows from Lemma~\ref{addlemma} that this morphism is the unique one with the necessary property.
\end{proof}

\subsubsection*{Exchange lemma}

The final property that we prove is an `exchange lemma', which identifies a restriction on the algebra of morphism composition in the presence of \dag\-limits. It can be seen as a stronger form of the nondegeneracy property demonstrated in Lemma~\ref{nondegen}. We will use this exchange lemma in an essential way in the next section, to prove that our generalized real numbers admit a total order.

\begin{lemma}[Exchange]
\label{exchangelemma}
In a \dag\-category with all finite \dag\-limits, for any parallel morphisms $f$ and $g$,
\begin{equation}
f^\sdag; f + g ^\sdag; g = f ^\sdag; g + g ^\sdag; f \, \Rightarrow \, f = g.
\end{equation}
\end{lemma}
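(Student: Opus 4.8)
The plan is to deduce the statement from the cancellability of hom-set addition (Lemma~\ref{additivelemma}), using a \dag\-equalizer to compensate for the absence of additive inverses. Morally the hypothesis says $(f-g)^\dag;(f-g)=0$, from which nondegeneracy (Lemma~\ref{nondegen}) would give $f=g$ at once; but we cannot form $f-g$, so instead I would package $f$ and $g$ together over the \dag\-biproduct $A\oplus A$.

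Concretely, write $i_1,i_2:A\to A\oplus A$ for the biproduct injections and $p_k=i_k^\dag$ for the projections, and let $\phi:=(f\,\,g)$ and $\psi:=(g\,\,f)$ be the two morphisms $A\oplus A\to B$ determined by $i_1;\phi=f$, $i_2;\phi=g$ and $i_1;\psi=g$, $i_2;\psi=f$. Take $(E,e)$ to be the \dag\-equalizer of the parallel pair $\phi,\psi$ (it exists by hypothesis), so that $e:E\to A\oplus A$ is an isometry with $e;\phi=e;\psi$, universal among cones. Two cones drive the argument. First, the diagonal $\Delta_A:A\to A\oplus A$ is a cone unconditionally, since $\Delta_A;\phi=f+g=\Delta_A;\psi$. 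Second --- and this is the only step that uses the hypothesis --- the morphism $\phi^\dag:B\to A\oplus A$ is a cone, because $\phi^\dag;\phi=f^\dag;f+g^\dag;g$ while $\phi^\dag;\psi=f^\dag;g+g^\dag;f$, and these coincide exactly by the assumed equation. Write $\widetilde\Delta:A\to E$ and $\widetilde{\phi^\dag}:B\to E$ for the induced factorisations through $e$, and set $e_k:=e;p_k:E\to A$; the normalization condition $e;e^\dag=\id_E$ then reads $e_1;e_1^\dag+e_2;e_2^\dag=\id_E$.

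From here the computation is short, and follows the pattern of the proofs of Lemmas~\ref{nondegen} and~\ref{additivelemma}. Precomposing the normalization condition with $\widetilde\Delta$ and using $\widetilde\Delta;e_k=\Delta_A;p_k=\id_A$ yields $e_1^\dag+e_2^\dag=\widetilde\Delta$, hence $e_1+e_2=\widetilde\Delta^\dag$, and in particular $e_1^\dag;\widetilde\Delta^\dag=(\widetilde\Delta;e_1)^\dag=\id_A$. Putting $\mu:=(\widetilde{\phi^\dag})^\dag:E\to B$ and dualising $\widetilde{\phi^\dag};e=\phi^\dag$ gives $e^\dag;\mu=\phi$, whence $e_1^\dag;\mu=f$ and $e_2^\dag;\mu=g$, while $\mu=(e;e^\dag);\mu=e;\phi=e_1;f+e_2;g$; and since $e$ equalises $\phi$ and $\psi$ we also have $\mu=e_1;g+e_2;f$. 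Adding the two expressions for $\mu$ gives $\mu+\mu=(e_1+e_2);(f+g)=\widetilde\Delta^\dag;(f+g)$, so $f+f=e_1^\dag;(\mu+\mu)=(\widetilde\Delta;e_1)^\dag;(f+g)=f+g$; cancelling an $f$ by Lemma~\ref{additivelemma} gives $f=g$.

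I expect the only genuinely non-obvious step to be the choice of diagram: once one decides to \dag\-equalize the \emph{swapped} pair $\phi=(f\,\,g)$, $\psi=(g\,\,f)$ and observes that the hypothesis is precisely the assertion ``$\phi^\dag$ is a cone'', everything else is forced by the normalization condition, a single use of the \dag\-functor (to move between $\widetilde{\phi^\dag}$ and $\mu$), and cancellability. I would also check explicitly that the hypothesis is genuinely used --- through the existence of the factorisation $\widetilde{\phi^\dag}$ --- so that the argument is not secretly establishing a falsehood.
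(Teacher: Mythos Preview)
Your argument is correct; every step checks. The choice of \dag\-equalizer is identical to the paper's, and your observation that the hypothesis is precisely the statement ``$\phi^\sdag$ is a cone'' is exactly the paper's key insight.

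Where you diverge is in the choice of \emph{second} cone. You use the diagonal $\Delta_A$, derive $e_1^\sdag;\mu=f$ and $e_2^\sdag;\mu=g$, average the two expressions for $\mu$, and finish with additive cancellability (Lemma~\ref{additivelemma}). The paper instead observes that, by the same computation, $\psi^\sdag=\jbigl\begin{smallmatrix} g^\sdag \\ f^\sdag \end{smallmatrix}\jbigr$ is \emph{also} a cone, and that the pair being equalized is literally $(\phi,\psi)=(p^\sdag,q^\sdag)$ where $p=\phi^\sdag$ and $q=\psi^\sdag$. Since any cone $c$ satisfies $c=c;e^\sdag;e$, one gets $\phi=p^\sdag=e^\sdag;e;p^\sdag=e^\sdag;e;q^\sdag=q^\sdag=\psi$ in one line, with no appeal to the diagonal or to cancellability. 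So the paper's route is shorter and logically independent of Lemma~\ref{additivelemma}; your route is slightly longer but perfectly sound, and has the minor virtue of using the hypothesis only once (to factor $\phi^\sdag$), rather than twice.
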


\begin{proof}
Let $f,g:A \to B$ be morphisms satisfying $f^\sdag; f + g ^\sdag; g = f ^\sdag; g + g ^\sdag; f$.  As might be expected from the earlier lemmas, our proof strategy is to construct a \dag\-equalizer diagram, which in this case consists of the parallel pair $(f\,\,\,g)$ and $(g\,\,\,f)$. We next deduce the existence of certain cones, $(B,p)$ and $(B,q)$, which factorize through the \dag\-equalizer $(E,e)$ via $\tilde p$ and $\tilde q$ respectively:
\begin{equation}
\begin{diagram}[nohug,midshaft,width=30pt,height=40pt]
&
\mathmove{5pt}{-25pt}{\tilde p}
&\rnode{1}{B}&
\mathmove{5pt}{13pt}{p = \left(
\hspace{-2pt}
{
\begin{smallmatrix} f^\sdag \\[2pt] g^\sdag \end{smallmatrix}
}
\hspace{-2pt} \right)
\hspace{-43pt}
}
&&&
\\
\rnode{2}{E} && \rInto^{e= \big(
\hspace{-2pt}
{
\begin{smallmatrix} e_1 \\[2pt] e_2 \end{smallmatrix}
}
\hspace{-2pt} \big)} && \rnode{3}{A \oplus A} & \hspace{35pt} \pile{ \rTo ^{(f \,\,\, g)} \\ \rTo _{(g \,\,\, f)}} & B
\\
& \mathmove{-3pt}{-25pt}{\tilde q} & \rnode{4}{B} & \mathmove{-3pt}{13pt}{q = \left(
\hspace{-2pt}
{
\begin{smallmatrix} g ^\sdag \\[2pt] f ^\sdag \end{smallmatrix}
}
\hspace{-2pt} \right)
\hspace{-43pt}
}
 &&&
\nccurve[angleA=180,angleB=80]{->}12
\nccurve[angleA=0,angleB=100]{->}13
\nccurve[angleA=180,angleB=-80]{->}42
\nccurve[angleA=0,angleB=-100]{->}43
\end{diagram}
\end{equation}
Since $e:E \to A \oplus A$ is a \dag\-equalizer we have $p ^\sdag = e ^\sdag; {\tilde p} ^\sdag = e ^\sdag; e; e ^\sdag; \tilde p ^\sdag = e^\sdag; e; p^\sdag$, and similarly $q ^\sdag = e^\sdag; e; q^\sdag$. The equalising morphism $e$ is a cone, and given that $(f\,\,\,g) = p ^\sdag$ and $(g\,\,\,f) = q ^\sdag$, we obtain $e; p ^\sdag = e ; q ^\sdag$. It is then straightforward to see that $(f\,\,\,g) = p ^\sdag = e ^\sdag; e; p ^\sdag = e ^\sdag; e; q ^\sdag = q ^\sdag = (g\,\,\,f)$, and so $f=g$ as required.
\end{proof}

\jnoindent
We call this the `exchange lemma' since, passing from one side of the main equation to the other, the morphisms $f$ and $g$ exchange positions. Many interesting relations arise as special cases of this lemma. Choosing $g=0_{A,B}$ we obtain the nondegeneracy result of Lemma~\ref{nondegen},
\begin{equation}
f^\sdag ; f = 0 _{B,B} \, \Rightarrow \, f = 0_{A,B},
\nonumber
\end{equation}
so the exchange lemma can be seen as a generalization of this. Another interesting special case is $g = \id _{A,A}$£, which gives, for all $f:A \to A$,
\begin{equation}
\id _A + f^\sdag ; f = f + f ^\sdag \, \Rightarrow \, f = \id _A.
\end{equation}
Finally, choosing $f$ and $g$ to be endomorphisms and $f = g ^\sdag$, we obtain
\begin{equation}
f; f ^\sdag + f ^\sdag; f = f; f + f ^\sdag; f ^\sdag \, \Rightarrow \, f = f ^\sdag,
\end{equation}
which gives a new way to identify self-adjoint endomorphisms.

Of course, since \cat{Hilb} is our primordial example of a \dag\-category with all finite \dag\-limits, the exchange lemma and its corollaries holds there. However, in this category --- or in any \dag\-category for which hom-set addition is invertible --- the exchange lemma is equivalent to the nondegeneracy condition, by moving terms across the equality and factorizing:
\begin{align*}
 f^\sdag; f + g ^\sdag; g &= f ^\sdag; g + g ^\sdag; f
\\
\Leftrightarrow  \hspace{35.5pt}  (f ^\sdag - g ^\sdag); f &= (f ^\sdag - g ^\sdag) ; g
\\
\Leftrightarrow \hspace{10pt} (f - g) ^\sdag; (f - g) &= 0
\end{align*}
In a general \dag\-category with \dag\-limits, however, the exchange lemma is more general, since although hom-set addition will be cancellable by Lemma~\ref{additivelemma}, it will not necessarily be invertible.

It seems likely that without the \dag\-functor there would be no analogue to the results in this section. For this reason, we argue that the \dag\-functor is an important mathematical structure which deserves to be studied in its own right.

\subsection*{More general \dag\-limits}
\label{moregeneral}
The definition of \dag\-limits can be substantially generalized, allowing us to compute \dag\-limits of (almost) arbitrary diagrams rather than just those in the shape of finite forest-shaped multigraphs. In the case that our \dag\-category is \emph{unitary}, meaning that any pair of isomorphic objects have a unitary isomorphism going between them, this more general type of \dag\-limit can always be constructed from the simpler type, and in fact merely having zero objects, finite \dag\-products and finite \dag\-equalizers gives enough power to construct them. The rest of the paper does not depend on this subsection, so it can be safely skipped.

To describe this bigger class of \dag\-limits, we begin by considering arbitrary finite diagrams.
These are finite sets of systems and processes,  closed under composition, such that for every process its initial and final systems are included, and for every system its identity process included. Here is a drawing of a simple diagram, where for clarity we leave out the identity processes:
\begin{equation}
\begin{diagram}[height=30pt,width=60pt]
& \rnode 4 {D} &&
\\
\rnode 1 {A} && \rnode 3 {C} & \rnode 5 E
\\
& \rnode 2 {B} & &
\ncarc[arcangle=10]  {->} 14 \Aput h
\ncarc[arcangle=0]   {->} 24 \Bput j
\ncarc[arcangle=-20] {->} 34 \Bput l
\ncarc[arcangle=20]  {->} 34 \Aput k
\ncarc[arcangle=20]  {->} 12 \Aput g
\ncarc[arcangle=20]  {->} 21 \Aput f
\nccircle[nodesep=3pt] {->} 5 {12pt} \Bput m
\end{diagram}
\end{equation}

\jnoindent
Suppose that  these processes compose in the following way:
\renewcommand\tempgap{\hspace{30pt}}
\vspace{-5pt}
\begin{equation}
g;j  = h
\tempgap
f;h =j
\tempgap
g;f = \id_{A}
\tempgap
f;g = \id_{B}
\tempgap
m;m = m
\vspace{-5pt}
\end{equation}
Then our processes are closed under composition, and the diagram is well-defined. Note that we allow cycles in these more general diagrams, as long as we make sure to retain closure under composition.

We now choose a privileged subset $\Omega$ of the systems in the diagram, called the \emph{supporting subset}, and we refer to its elements as the \emph{supporting objects}. The only constraint we impose on $\Omega$ is that, by starting at systems in $\Omega$ and following processes in the diagram, we must be able to reach every system. So $\{A,C,E\}$ would be an allowed choice for $\Omega$, as we can get to $B$ by following $g:A \to B$, and to $D$ by following $h:A \to D$ (or alternatively $k$ or~$l$.) An illegal choice for $\Omega$ would be $\{C,D,E\}$, as neither $A$ nor $B$ can be reached starting from those objects. It is always valid to take $\Omega$ to contain all the objects in the diagram. However, it is vital that we have the freedom to take $\Omega$ as any supporting subset, not only the maximal one: otherwise we would not be able to construct \dag\-equalizers, which we rely on for many of our results. \label{nonmaxomega}

Given a particular diagram $F:\cat J \to \cat C$, and a valid choice of supporting subset $\Omega$ of the objects of \cat J, a \dag\-limit for this diagram is a limit system $L$ in the usual sense, equipped with limit maps $l_S: L \to F(S)$  satisfying the following normalization condition:
\begin{equation}
\sum _{S \in \Omega} l_S ^\pdag ; l_S ^\sdag = \id _{L}.
\end{equation}
This is very similar to the previous definition of \dag\-limits, but our normalization condition does not involve the limit maps to the leaf objects (as our diagrams will not in general be forest-shaped), but rather to the objects in the supporting subset. We draw an example of this for the example diagram given earlier, with the supporting subset chosen to be~$\Omega = \{A,C,D,E\}$:
\begin{equation}
\begin{diagram}[height=30pt,width=50pt]
& \rnode 4 {D} &&
\\
\rnode 1 {A} && \rnode 3 {C} & \rnode 5 E
\\
& \rnode 2 {B} & &
\\
&& {\gray \rnode{L}{L}}
\ncarc[arcangle=10]  {->} 14
\ncarc[arcangle=0]   {->} 24
\ncarc[arcangle=-20] {->} 34
\ncarc[arcangle=20]  {->} 34
\ncarc[arcangle=20]  {->} 12
\ncarc[arcangle=20]  {->} 21
\nccircle[nodesep=3pt] {->} 5 {12pt}
\psset{linecolor=gray}
{\gray
\ncarc[arcangle=50]{->}L1 \nbput[npos=0.3]{\!\!l_A}
\ncarc[arcangle=-10]{->}L3 \Aput {l_C\!}
\ncarc[arcangle=10]{->}L4 \naput[npos=0.32]{l_D\!\!\!}
\ncarc[arcangle=-20]{->}L5 \naput[npos=0.4]{l_E\!\!}
}
\end{diagram}
\hspace{30pt}
l_A ^\pdag; l_A ^\sdag + l_C ^\pdag; l_C ^\sdag + l_D ^\pdag; l_D ^\sdag + l_E ^\pdag; l_E ^\sdag = \id _L
\end{equation}
Any \dag\-limit obtained from a forest-shaped multigraph, as described in previous sections, is clearly also a \dag\-limit in this more general sense, where the supporting subset $\Omega$ is taken to be the set of leaves of the diagram. We also mention that it is straightforward to prove an extension of Lemma~\ref{uniqueness} showing that these more general types of \dag\-limit are unique up to unique unitary isomorphism. This more general type of \dag\-limit can be computed for any diagram that admits a finite set of supporting objects.

One approach to the standard theory of categorical limits \cite{ml97-cwm} states that a category has limits exactly when the diagonal functor $\Delta: \cat C \to \cat C ^{\cat J}$ has a left adjoint. It would be desirable to find a generalization of this approach that works for the case of \dag\-limits, perhaps by replacing categories by \dag\-categories throughout. However, the author has been unable to develop a theory along these lines. One problem that is encountered is that, in the theory of \dag\-limits presented here, \emph{non}-\dag\-categories are still important --- for example, as the diagram category for a \dag\-equalizer. \label{generalizedlimitdiscussion}

\subsubsection*{Importance of the choice of supporting subset}

The maps from the limit object for a general  \dag\-limit depend significantly on the choice of supporting subset $\Omega \subseteq \Ob ( \cat J)$. As an example, consider the following simple diagram in \cat{Hilb}, the category of Hilbert spaces:
\begin{equation}
\begin{diagram}[midshaft,width=60pt,height=50pt]
& \rnode 2 {\mathbb{C}} &
\\
\rnode 1 {\mathbb{C}^2} && \rnode 3 {\mathbb{C}^2}
\ncarc[arcangle=15]  {->} 13 \Aput {\jbigl\begin{smallmatrix} 2&0 \\ 0&1 \end{smallmatrix} \jbigr}
\ncarc[arcangle=15]  {->} 31 \Aput {\left( \begin{smallmatrix} {\scriptscriptstyle \frac{1}{2}} & 0 \\ 0&1 \end{smallmatrix} \right) }
\ncarc[arcangle=-15] {->} 21 \Bput { \left( \begin{smallmatrix} 1\\0 \end{smallmatrix} \right)}
\ncarc[arcangle=15] {->} 23 \Aput { \left( \begin{smallmatrix} 2\\0 \end{smallmatrix} \right)}
\end{diagram}
\vspace{25pt}
\end{equation}
Each of the objects has a canonical basis, and we represent the morphisms of the diagram as matrices with respect to those bases. The limit object for this diagram can be taken to be $\mathbb{C}$, regardless of the choice of supporting subset. If we take the supporting subset to only contain the object $\mathbb{C}$ in the middle of the diagram, then the \dag\-limit morphism is the linear map $1 : \mathbb{C} \to \mathbb{C}$, which clearly satisfies the normalization condition. Instead, suppose we take the supporting subset to contain all the objects of the diagram; then the limit maps are, in order of objects from left to right, $\jbigl \begin{smallmatrix}  1/ \scriptscriptstyle\sqrt 6 \\ 0 \end{smallmatrix} \jbigr : \mathbb{C} \to \mathbb{C}^2$, $1/\sqrt 6 : \mathbb{C} \to \mathbb{C}$ and $\jbigl \begin{smallmatrix}  2/{\scriptscriptstyle \sqrt 6}\\ 0 \end{smallmatrix}  \jbigr : \mathbb{C} \to \mathbb{C}^2$. It is easy to check that these also satisfy the correct normalization condition. The power of the \dag\-limit construction is that these are essentially unique, up to unique unitary isomorphism.

For any object $J \in \Ob(\cat J)$, we can associate a canonical self-adjoint morphism $l_J ^\sdag ; l_J ^\pdag : F(J) \to F(J)$. This is \textit{uniquely defined} for a given supporting subset, a property that follows straightforwardly from the fact that the \dag\-limit is unique up to unique unitary isomorphism. Note that the object $J$ does not itself have to be in~$\Omega$. For the example just described, for the case that every object is in the supporting subset, these self-adjoint morphisms are, from left to right, $\jbigl \begin{smallmatrix} {\scriptscriptstyle\frac{1}{6}}& 0 \\ 0&0 \end{smallmatrix} \jbigr$, $\frac 1 6$ and $\jbigl \begin{smallmatrix} {\scriptscriptstyle\frac{2}{3}}& 0 \\ 0&0 \end{smallmatrix} \jbigr$.

{\sloppypar
Now suppose that our \dag\-category has a well-defined notion of \textit{trace} for endomorphisms, valued in some semiring, such that $\Tr(f;g)=\Tr(g;f)$ for all oppositely-directed $f$ and $g$, and \mbox{$\Tr(h+j)=\Tr(h)+\Tr(j)$} for all $h$ and $j$ which are both endomorphisms of the same object. Restricting to objects in the supporting subset and summing over these traces, we see that
\newcommand\jj{3.3pt}
\begin{align}
\sum _{S \in \Omega} \Tr\jbigl l_S ^\sdag; l_S ^\pdag \jbigr &= \sum _{S \in \Omega} \Tr(l_S ^\pdag; l_S ^\sdag) = \Tr
\shiftbrackets{3.3pt}{
$\!\displaystyle \sum _{S \in \Omega} l_S ^\pdag; l_S ^\sdag$}
= \Tr(\id_L).
\end{align}
In many contexts the scalar $\Tr(\id_L)$ represents the \emph{size} of the object $L$, and so it is apparent that each scalar $\Tr\jbigl l_S ^\sdag; l_S ^\pdag \jbigr$ --- which in many categories will be `positive' in a suitable sense~--- indicates `how much' of $L$ arises from the object $S$. Note that although $\Tr(\id_L)$ will, in many commonly-encountered categories, necessarily be an `integer', there is no such restriction on the values $\Tr\jbigl l_S ^\sdag; l_S ^\pdag \jbigr$. Also, since every diagram has a canonical choice of supporting subset given by all the objects, this gives rise to a canonical weighting, or `measure', on the objects of the diagram. For the example described above, in order of objects from left to right, these weightings are $\frac 1 6$, $\frac 1 6$ and $\frac 2 3$, which sum to $\Tr(\id_{\mathbb{C}}) = 1$ as required.

\subsubsection*{An existence theorem for \dag\-limits}

We now examine the possibility of constructing arbitrary \dag\-limits from special ones, the \dag\-equalizers and \dag\-biproducts. We will find that this is possible as long as our category is \emph{unitary}, meaning that every pair of isomorphic objects has a unitary isomorphism going between them. This can be seen as an extension of the conventional existence theorem for limits, although the proof does not transfer straightforwardly since \dag\-limits are significantly different from ordinary limits.

We begin by examining how to obtain arbitrary finite \dag\-equalizers from simpler types of \dag\-limit.
\begin{lemma}
\label{fde}
If a \dag\-category has binary \dag\-equalizers and binary \dag\-biproducts, then it has all finite \dag\-equalizers.
\end{lemma}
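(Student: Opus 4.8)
The plan is to reduce a finite \dag-equalizer of $n$ arrows to an iterated binary \dag-equalizer, using \dag-biproducts to package parallel arrows into a single pair at each stage. Given a finite family of parallel arrows $f_1, \dots, f_n : A \to B$, the ordinary equalizer of the whole family can be computed as the equalizer of the single pair $(f_1, \dots, f_1) : A \to B^{\oplus n}$ and $(f_1, f_2, \dots, f_n) : A \to B^{\oplus n}$, since a morphism $x : X \to A$ satisfies $x; f_i = x; f_j$ for all $i,j$ exactly when $x;(f_1, \dots, f_1) = x;(f_1, \dots, f_n)$. So a binary \dag-equalizer $(E, e)$ of this pair certainly gives an \emph{ordinary} equalizer of the original family; the only thing to check is that the normalization condition is the correct one, namely $e; e^\sdag = \id_E$. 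But that is exactly the defining condition of the binary \dag-equalizer we used, so it holds automatically.

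First I would set up the replacement of the $n$-fold diagram by the two-arrow diagram into $B^{\oplus n}$, using the \dag-biproduct to form the tuple maps, and verify the bijection of cones so that the equalizers genuinely agree as ordinary limits. Then I would invoke the binary \dag-equalizer of that pair, giving an isometry $e : E \to B^{\oplus n}$; composing with the biproduct projections $p_i$ recovers maps $E \to B$, and the equalizer condition forces $e; p_i$ to be independent of $i$, so $(E, e; p_1)$ is the cone we want. The leaf of the finite-equalizer diagram is the single object $A$, and the required normalization condition is $(e;p_1); (e;p_1)^\sdag = \id_E$; I would need to see that this follows from $e; e^\sdag = \id_E$, which it does provided the tuple map $(f_1, \dots, f_1)$, or rather the relevant structural maps, behave correctly under $\dag$ — this is where the matrix calculus for \dag-biproducts and the identity $(g_1, \dots, g_n)^\sdag = \sum_i p_i^\sdag; \cdots$ from the biproduct equations is used. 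Concretely, $e; p_1$ is an isometry because $e$ is and because $p_1$ is part of a \dag-biproduct; one checks $e; p_1; p_1^\sdag; e^\sdag$ against the biproduct relations.

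The case $n = 0$ (empty parallel family) and $n = 1$ (a single arrow, whose \dag-equalizer should be $A$ itself with the identity) should be dispatched separately as trivial base cases, though $n=0$ really needs an \dag-equalizer of the empty diagram on $A \rightrightarrows B$, i.e. just $\id_A$, which one might prefer to get from the existence of binary \dag-equalizers of $(f,f)$.

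The step I expect to be the main obstacle is verifying that the normalization condition transfers correctly: it is tempting to think the binary \dag-equalizer into $B^{\oplus n}$ "is" the finite \dag-equalizer, but one must be careful that the isometry into $B^{\oplus n}$ is not the same map as the cone map into $B$, and confirm that the latter inherits the isometry property rather than, say, an isometry-up-to-scalar property. I would resolve this by writing $e$ in components $e_i = e; p_i$, using that all $e_i$ coincide (call the common value $\ell$), expanding $\id_E = e; e^\sdag = \sum_i e_i; e_i^\sdag$ via the \dag-biproduct matrix calculus, and concluding $\id_E = n \cdot (\ell; \ell^\sdag)$ — which is \emph{not} quite $\ell;\ell^\sdag = \id_E$. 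This is the real subtlety: the honest fix is to instead build the finite \dag-equalizer of $f_1, \dots, f_n$ as the binary \dag-equalizer of the \emph{single pair} $(f_1, f_2, \dots, f_n)$ and $(f_2, f_3, \dots, f_n, f_1)$ (cyclic shift) into $B^{\oplus n}$, whose equalizing isometry maps \emph{directly} to $A$ in the sense that its composite with a single projection is the cone map, so that no spurious factor of $n$ appears; alternatively one pairs $(f_1, \dots, f_n)$ against $(f_1, \dots, f_1)$ but then takes the cone map to be $e; p_1$ and argues, using that the equalizer forces $e;p_i$ independent of $i$ and the biproduct relations, that in fact $e$ factors through the diagonal $B \to B^{\oplus n}$ up to the isometry, giving genuinely $\ell; \ell^\sdag = \id_E$. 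Pinning down which packaging makes the normalization come out exactly right, without an extra scalar, is the crux of the argument; everything else is routine diagram-chasing with the \dag-biproduct equations.
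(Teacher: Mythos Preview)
Your opening paragraph is essentially the paper's proof, and it is complete as stated: form the two maps $(f_1,\dots,f_1)$ and $(f_1,\dots,f_n)$ from $A$ to $B^{\oplus n}$, take their binary \dag\-equalizer $e:E\to A$, observe that the cones for this pair coincide with the cones for the original family, and note that the isometry condition $e;e^\sdag=\id_E$ is already exactly the normalization condition required for the finite \dag\-equalizer (since $A$ is the unique leaf). The paper does precisely this, writing the pair as $F$ and $f_1;\Delta$.

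The confusion enters in your second and fourth paragraphs, where you write that the binary \dag\-equalizer gives ``an isometry $e:E\to B^{\oplus n}$'' and then try to recover a cone map $e;p_1:E\to B$. This misidentifies the target of an equalizer: the equalizer of a parallel pair $g,h:A\to C$ is a map into the \emph{domain} $A$, not the codomain $C$. So here $e:E\to A$, and the cone map for the finite \dag\-equalizer of $f_1,\dots,f_n$ is $e$ itself, not any composite with a projection. Once this is corrected, the ``spurious factor of $n$'' you worry about never arises: the normalization condition you need is literally $e;e^\sdag=\id_E$, which you already have. The cyclic-shift variant and the diagonal-factorization argument you sketch are unnecessary detours caused by this slip; your first paragraph already had it right.
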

\jbeginproof
Let $f_i:A \to B$ be a set of parallel arrows indexed by $i \in I$, a finite set. Then we can construct the $I$-fold \dag\-biproduct $B ^{\oplus I}$, and define a column vector $F:A \to {B ^{\oplus I}}$ as the unique morphism with the property that $F; p_i = f_i$, where
$p_i : {B ^{\oplus I}} \to B$ is the projection onto the $i$th factor. Let $\Delta: B \to B^{\oplus I}$ be the diagonal map, and construct the following \dag\-equalizer:
\begin{equation}
\begin{diagram}[midshaft]
E & \rTo^e & A & \pile{\rTo ^F \\ \rTo _{f_1; \Delta}} & B ^{\oplus I}
\end{diagram}
\end{equation}
Postcomposing with $p_i: B ^{\oplus I} \to B$ we obtain $e; F; p_i = e; f_1 ; \Delta; p_i$, which simplifies to $e; f_i = e; f_1$. It follows that $e;f_i = e;f_j$ for all $i,j \in I$, and so $e:E \to A$ is a cone for the morphisms $f_i : A \to B$. Now let $x:X \to A$ be any map such that $x;f_i = x;f_j$ for all $i,j \in I$. Then $x$ is also a cone for the morphisms $F$ and $f_1; \Delta$, and so factorizes uniquely through $e:E \to A$. It follows that the morphism $e$ is the \dag\-equalizer of the morphisms $f_i : A \to B$.
\end{proof}

We will also require the two following technical lemma, which says that we can take the `square root' of any `natural number'.
\begin{lemma}
\label{rootn}
In a unitary \dag\-category with binary \dag\-equalizers and binary \dag\-biproducts, for each object $A$ and each natural number $n$, there is an isomorphism $r :A \to A$ with $r; r ^\sdag = n \cdot \id _A$.
\end{lemma}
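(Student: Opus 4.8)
The plan is to build a morphism that behaves like ``multiplication by $\sqrt{n}$'': it will be the canonical comparison isomorphism between the $n$-fold diagonal $\Delta : A \to A^{\oplus n}$ and the \dag\-equalizer of the $n$ projection maps of $A^{\oplus n}$, post\-composed with a unitary that replaces the \dag\-equalizer object by $A$ itself. We may assume $n \geq 1$.

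First I would form the $n$-fold \dag\-biproduct $A^{\oplus n}$, available from binary \dag\-biproducts, with injections $i_j : A \to A^{\oplus n}$, projections $p_j := i_j^\dag$, and diagonal $\Delta : A \to A^{\oplus n}$, the unique morphism with $\Delta ; p_j = \id_A$ for every $j$. A direct computation with the biproduct axioms then shows that $\Delta$ is already a ``square root'' of $n \cdot \id_A$, though of the wrong type:
\begin{equation*}
\Delta ; \Delta^\dag \;=\; \Delta ; \left( \sum_j i_j^\dag ; i_j \right) ; \Delta^\dag \;=\; \sum_j (\Delta ; i_j^\dag) ; (i_j ; \Delta^\dag) \;=\; \sum_j \id_A ; \id_A \;=\; n \cdot \id_A ,
\end{equation*}
where I use $\Delta ; i_j^\dag = \Delta ; p_j = \id_A$, hence $i_j ; \Delta^\dag = (\Delta ; i_j^\dag)^\dag = \id_A$, together with the distributivity of composition over hom\-set addition.

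Next I would invoke Lemma~\ref{fde} to form the finite \dag\-equalizer $(E,e)$ of the parallel family $p_1, \dots, p_n : A^{\oplus n} \to A$, so that $e : E \to A^{\oplus n}$ is an isometry: $e ; e^\dag = \id_E$. Since each $p_j$ is a biproduct projection, $\Delta ; p_j$ does not depend on $j$, so $(A, \Delta)$ is a cone for this family, and it is in fact an ordinary equalizer (its universal property is inherited directly from that of the product $A^{\oplus n}$). As $(E,e)$ is also an equalizer, there is a unique isomorphism $\widetilde\Delta : A \to E$ with $\widetilde\Delta ; e = \Delta$. In particular $E \cong A$, and at this point --- the one place the hypothesis enters --- unitarity of the \dag\-category furnishes a unitary $u : E \to A$.

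Finally I would put $r := \widetilde\Delta ; u : A \to A$, which is an isomorphism as a composite of the isomorphism $\widetilde\Delta$ with the unitary $u$, and compute
\begin{equation*}
r ; r^\dag \;=\; \widetilde\Delta ; u ; u^\dag ; \widetilde\Delta^\dag \;=\; \widetilde\Delta ; \widetilde\Delta^\dag \;=\; \widetilde\Delta ; e ; e^\dag ; \widetilde\Delta^\dag \;=\; (\widetilde\Delta ; e) ; (\widetilde\Delta ; e)^\dag \;=\; \Delta ; \Delta^\dag \;=\; n \cdot \id_A ,
\end{equation*}
using in turn $u ; u^\dag = \id_E$, then $e ; e^\dag = \id_E$, then $\widetilde\Delta ; e = \Delta$, and finally the biproduct identity above. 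The only genuine obstacle --- and the real content of the lemma --- is recognising that the obvious candidate $\Delta$ lives in the wrong hom\-set, and that the cure is to carve an isometric copy of $A$ out of $A^{\oplus n}$ with a \dag\-equalizer and then straighten that copy onto $A$ using unitarity; once this is seen, each step is a routine manipulation of the \dag\-biproduct and \dag\-equalizer axioms.
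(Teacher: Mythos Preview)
Your proof is correct and follows essentially the same route as the paper: form the \dag\-equalizer of the projections $p_j:A^{\oplus n}\to A$, compare it with the ordinary equalizer $\Delta$, use unitarity of the \dag\-category to replace the equalizer object by $A$, and then compute $r;r^\dag$ by collapsing $u;u^\dag$ and $e;e^\dag$ to identities so that only $\Delta;\Delta^\dag=n\cdot\id_A$ remains. The paper's argument is identical up to notation (it writes $m$ for your $\widetilde\Delta$ and invokes $\Delta;\nabla=n\cdot\id_A$ rather than expanding via $\sum_j i_j^\dag;i_j$, but these are the same computation).
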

\jbeginproof
Write $p_i: A ^{\oplus n} \to A$ for the projection of the \dag\-biproduct onto its $i$th factor, and consider all these maps together as forming an equalizer diagram:
\begin{equation}
\begin{diagram}[midshaft,nohug]
\rnode{E}{E} &&&
\\
 &  \rnode{N}{A ^{\oplus n}} & \pile{\scriptsize \rTo ^{p_1} \\ \cdots\hspace{-5pt} \\ \scriptsize\rTo _ {p_n}} & A
\\
\rnode{A}{A} &&&
\nccurve[angleA=90,angleB=-90]{->}AE\Bput {m}
\nccurve[angleA=-180,angleB=180,ncurv=1.0,linestyle=dashed]{->}EA\Bput{u}
\nccurve[angleA=0,angleB=90]{->}EN\Aput{e}
\nccurve[angleA=0,angleB=-90]{->}AN\Bput{\Delta}
\end{diagram}
\end{equation}
The $n$-fold diagonal map $\Delta : A \to A ^{\oplus n}$ is an equalizer for these maps, since given any $x:X \to A ^{\oplus n}$ with $x;p_i = x; p_j$ for all valid $i$ and $j$, $x$ factors uniquely through $\Delta$ as $x;p_1; \Delta$. By Lemma~\ref{fde} we can construct the \dag\-equalizer of the maps $p_i$, which we denote by $e : E \to A ^{\oplus n}$. Since $e$ and $\Delta$ are both equalizers, there is a unique isomorphism $m:A \to E$ with $m; e = \Delta$; and since $A$ and $E$ are isomorphic, by unitarity of the \dag\-category, there exists some unitary morphism $u:E \to A$. Defining an endomorphism $r:=m;u : A \to A$, we see that
\begin{equation}
r; r ^\sdag = m; u; u ^\sdag; m ^\sdag = m; m ^\sdag = m; e; e ^\sdag; m ^\sdag = \Delta; \nabla = n \cdot \id_A,
\end{equation}
where in the fourth expression we have inserted the identity in the form $\id _E = e;e ^\sdag$.
Since both $m$ and $u$ are isomorphisms it follows that $r:A \to A$ is also an isomorphism.
\end{proof}

We now describe a new fundamental construction called the \emph{\dag\-intersection}. In a \dag\-category, given a finite family of isometries \mbox{$x_i:X_i \to A$}, their \emph{\dag\-intersection} is defined to be a pullback $(P, \pi_i)$ such that each of the maps $\pi_i: P \to X_i$ is an isometry. The notion of \dag\-intersection is a geometrical one: given a family of isometries representing subobjects of a given object, the \dag\-intersection is an isometry representing the intersection of all these subobjects. Of course, this intersection could be zero. We note that the \dag\-intersection of a family of isometries is \textit{not} given by their \dag\-pullback, apart from the trivial case where we are taking the \dag\-intersection of a single isometry.

We now give an existence theorem for \dag\-intersections.

\begin{lemma}
\label{dagintersections}
If a unitary \dag\-category has all binary \dag\-equalizers and binary \dag\-biproducts then it has all finite \dag\-intersections.
\end{lemma}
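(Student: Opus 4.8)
The plan is to build the ordinary pullback of the family from a \dag\-biproduct and a \dag\-equalizer, to notice that its cone maps are isometries only `up to a uniform scalar factor of $1/n$', and then to correct this defect using the `square root of $n$' automorphism supplied by Lemma~\ref{rootn}. The one essentially new ingredient is this last rescaling step; everything else is the classical construction of a finite limit from products and equalizers, dressed up with adjoints. Let $x_i : X_i \to A$, for $i = 1, \dots, n$ with $n \geq 1$, be the given isometries.

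First I would form the $n$\-fold \dag\-biproducts $B := \bigoplus_i X_i$ and $A^{\oplus n}$, with biproduct projections $p_i : B \to X_i$ and $q_j : A^{\oplus n} \to A$, so that $\sum_i p_i ; p_i^\dag = \id_B$ and $p_i ; p_j^\dag = 0$ whenever $i \neq j$. Let $F, G : B \to A^{\oplus n}$ be the unique morphisms determined by $F ; q_j = p_j ; x_j$ and $G ; q_j = p_1 ; x_1$ for all $j$, and, using Lemma~\ref{fde}, let $e : E \to B$ be their \dag\-equalizer. Putting $\pi_i^0 := e ; p_i : E \to X_i$, the equation $e ; F = e ; G$ unwinds, on composing with each $q_j$, to $\pi_j^0 ; x_j = \pi_1^0 ; x_1$ for all $j$, so $(E, \pi_i^0)$ is a cone over the diagram of the $x_i$; a routine verification — the same as the classical argument that a product together with an equalizer yields a pullback — shows it is the limiting cone, i.e.\ the pullback of the $x_i$.

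Next I would extract the scalar identity. Since $e$ is an isometry, $e ; e^\dag = \id_E$, so
\[
\textstyle \sum_i \pi_i^0 ; (\pi_i^0)^\dag \;=\; e ; \bigl( \sum_i p_i ; p_i^\dag \bigr) ; e^\dag \;=\; e ; e^\dag \;=\; \id_E .
\]
Moreover the summands all coincide: writing $\sigma := \pi_i^0 ; x_i$, which is independent of $i$ by the cone condition, and taking adjoints, $\rho := x_i^\dag ; (\pi_i^0)^\dag$ is likewise independent of $i$; and since $x_i ; x_i^\dag = \id_{X_i}$ we get $(\pi_i^0)^\dag = x_i ; x_i^\dag ; (\pi_i^0)^\dag = x_i ; \rho$, whence $\pi_i^0 ; (\pi_i^0)^\dag = \pi_i^0 ; x_i ; \rho = \sigma ; \rho$ for every $i$. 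Therefore $n \cdot \bigl( \pi_i^0 ; (\pi_i^0)^\dag \bigr) = \id_E$ for each individual $i$.

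Finally, Lemma~\ref{rootn} — applicable because the \dag\-category is unitary with binary \dag\-equalizers and \dag\-biproducts — supplies an isomorphism $r : E \to E$ with $r ; r^\dag = n \cdot \id_E$. I would set $\pi_i := r ; \pi_i^0 : E \to X_i$; precomposing a limiting cone with an automorphism of its apex produces another limiting cone, so $(E, \pi_i)$ is again the pullback of the $x_i$. Using only distributivity of $+$ over composition,
\[
n \cdot ( \pi_i ; \pi_i^\dag ) \;=\; r ; \bigl( n \cdot ( \pi_i^0 ; (\pi_i^0)^\dag ) \bigr) ; r^\dag \;=\; r ; \id_E ; r^\dag \;=\; r ; r^\dag \;=\; n \cdot \id_E ,
\]
and cancelling the nonzero $n$ — by the argument of Lemma~\ref{addlemma}, which needs only finite \dag\-biproducts and binary \dag\-equalizers — gives $\pi_i ; \pi_i^\dag = \id_E$. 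Hence each $\pi_i$ is an isometry, and $(E, \pi_i)$ is the desired \dag\-intersection. The main obstacle, I expect, is conceptual rather than computational: one must see that the naive pullback is \emph{not} the \dag\-intersection, its cone maps being uniformly `too short' by a factor $\sqrt n$, and that Lemma~\ref{rootn} furnishes exactly the automorphism of the apex needed to rescale them; the remaining steps, including the check that $(E, \pi_i^0)$ really is the pullback, are routine bookkeeping with biproduct projections and adjoints.
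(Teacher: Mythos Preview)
Your proof is correct and follows essentially the same route as the paper: build the pullback as the \dag\-equalizer of the maps $p_i;x_i$ out of the \dag\-biproduct $\bigoplus_i X_i$, observe that the resulting cone maps fail to be isometries by a uniform factor of $n$, correct this using the square-root automorphism $r$ of Lemma~\ref{rootn}, and verify the isometry condition by multiplying through by $n$ and cancelling via Lemma~\ref{addlemma}. The only cosmetic differences are that the paper equalizes the family $p_i;x_i$ directly (via Lemma~\ref{fde}) rather than packaging it as a binary equalizer of your $F,G$, and that the paper checks the isometry condition on the single composite $s = r;e;p_i;x_i : E \to A$ rather than on each $\pi_i$ separately --- but since each $x_i$ is itself an isometry these are equivalent, and your intermediate observation that all the $\pi_i^0;(\pi_i^0)^\dag$ coincide (being equal to $\sigma;\sigma^\dag$) is exactly what makes the paper's summation work.
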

\jbeginproof
Let $x_i : X_i \into A$ be our family of isometries in a unitary \dag\-category \cat{C}, indexed by a finite set $J$. We construct the \dag\-biproduct ${\xbigoplus} _{i \in J} X_i$, with canonical projections $p_i:{\xbigoplus} _{i \in J} X_i \to X_i$. Considering our family of isometries as a diagram in \cat{C}, we can construct its \dag\-pullback by forming the \dag\-equalizer $e:E \to {\xbigoplus} _{i \in J} X_i$ of the morphisms $p_i;x_i: {\xbigoplus} _{i \in J} X_i \to A$, making use of Lemma~\ref{fde}. The cone maps of the \dag\-limit are then given by $e;p_i:E \to X_i$. It is straightforward to check that they form a limit, and the normalization condition is satisfied since $\sum_{i \in J} e; p_i^\pdag ; p_i ^\sdag; e ^\sdag = e;( \sum _{i \in J} p_i ^\pdag ; p_i ^\sdag ); e ^\sdag = e; e ^\sdag = \id _E$.

Any of the composites $e;p_i;x_i:E \to A$, all of which are equal, intuitively represents the intersection of the isometries $x_i:X_i \to A$. However, these composites are not isometries in general; we must add a normalization factor. We construct the \dag\-intersection of the morphisms $x_i$ as $s := r_{E, |J|}; e; p_i; x_i : E \to A$, for any choice of $i \in J$, where $r _{E,|J|} : E \to E$ is an isomorphism satisfying $r _{E,|J|} ^{}; r _{E,|J|} ^{\scriptscriptstyle\dagger} = |J| \cdot \id_E$ as described in Lemma~\ref{rootn}, and $|J|$ is the number of elements of $J$. Our morphism $s$ does indeed factor through the projections of a pullback in the necessary way, since we have already shown that the morphisms $e; p_i$ form the projections of a \dag\-pullback, and since limits are preserved by isomorphisms, so do the morphisms $r_{E,|J|};e;p_i$. To show that $s$ is an isometry is to show that $s;s ^\sdag = \id _E$, and by Lemma~\ref{addlemma}, it suffices to show that $|J| \cdot (s;s ^\sdag) = |J| \cdot \id _E$: 
\begin{align}
\nonumber
|J| \cdot (s; s ^\sdag)
&= \sum _{i \in J} s; s ^\sdag
\\
\nonumber
&= \sum_{i \in J} \left((r_{E,|J|}; e; p_i; x_i); (r_{E,|J|}; e; p_i; x_i) ^\sdag \right)
\\
\nonumber
&= \sum _{i \in J} \left( r_{E,|J|} ^\pdag; e; p_i ^\pdag; x_i ^\pdag; x_i ^\sdag; p_i ^\sdag; e ^\sdag; r_{E,|J|} ^\sdag \right)
\\[-4pt]
&= r_{E,|J|} ^\pdag; e; \shiftbrackets{3.4pt}{$\displaystyle \sum_{i \in J} p _i^\pdag  ; p _i ^\sdag$} ; e ^\sdag; r_{E,|J|} ^\sdag
\nonumber
\\
&= r_{E,|J|} ^\pdag; e; e ^\sdag; r_{E,|J|} ^\sdag
\nonumber
\\
&= r_{E,|J|} ^\pdag; r_{E,|J|} ^\sdag = |J| \cdot \id _E.
\end{align}
This completes the proof.
\end{proof}

Finally, we weave these lemmas together to obtain  an existence theorem for \dag\-limits.

\begin{theorem}[Existence theorem for \dag\-limits]
\label{existence}
A unitary \dag\-category has all finite \dag\-limits iff it has a zero object, binary \dag\-equalizers and binary \dag\-biproducts.
\end{theorem}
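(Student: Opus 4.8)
For the easy direction, if $\cat C$ has all finite \dag-limits then it has the \dag-limit of the empty diagram, whose normalization condition reads $\id_L = 0_{L,L}$ and so exhibits a zero object; it has the \dag-limit of any parallel pair of arrows, which is a binary \dag-equalizer; and it has the \dag-limit of the discrete two-object diagram, which is a binary \dag-biproduct by Lemma~\ref{dagbiproducts}. I would dispatch this half in a sentence.

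For the converse, suppose $\cat C$ is unitary with a zero object, binary \dag-equalizers and binary \dag-biproducts. Then $\cat C$ has all finite \dag-biproducts (iterated binary ones, with the zero object as the nullary case), and by Lemma~\ref{dagintersections} it has all finite \dag-intersections. Let $F : \cat J \to \cat C$ be a finite diagram, and write $\Omega \subseteq \Ob(\cat J)$ for its supporting subset --- the set of leaves, in the forest-shaped case. The plan is to build the \dag-limit explicitly as a \dag-subobject of the biproduct $P := \bigoplus_{S \in \Omega} F(S)$ of just the supporting objects, cut out by the cone equations; writing $p_S : P \to F(S)$ for the projections, the normalization condition will then be forced by the biproduct identity $\sum_{S \in \Omega} p_S ; p_S^\sdag = \id_P$. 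Since $\Omega$ is a supporting subset, for each object $T$ of $\cat J$ I fix a path in $\cat J$ from some $\omega(T) \in \Omega$ to $T$, with composite $\phi_T : F(\omega(T)) \to F(T)$, taking the empty path when $T \in \Omega$. The organising remark is that a cone on $F$ with apex $X$ is exactly a morphism $x : X \to P$ with $x ; p_{\omega(A)} ; \phi_A ; F(f) = x ; p_{\omega(B)} ; \phi_B$ for every arrow $f : A \to B$ of $\cat J$, the cone leg at $T$ being $x ; p_{\omega(T)} ; \phi_T$; one direction uses that a cone satisfies $x_T = x_{\omega(T)} ; \phi_T$, the other checks the cone law edge by edge.

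Accordingly I would, for each arrow $f : A \to B$ of $\cat J$, form the binary \dag-equalizer $\epsilon_f : E_f \into P$ of the pair $\bigl(p_{\omega(A)} ; \phi_A ; F(f),\ p_{\omega(B)} ; \phi_B\bigr)$, which is an isometry, and then take the \dag-intersection $(L, (\pi_f)_f)$ of the finite family of isometries $(\epsilon_f)_f$, with induced isometry $e := \pi_f ; \epsilon_f : L \to P$ (independent of $f$, and an isometry as a composite of isometries; when $\cat J$ has no non-identity arrows, take $L := P$ and $e := \id_P$). Set $l_T := e ; p_{\omega(T)} ; \phi_T$ for every object $T$, so that $l_S = e ; p_S$ for $S \in \Omega$. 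There are then three routine verifications. (i) $(L, (l_T)_T)$ is a cone: for $f : A \to B$ we have $l_A ; F(f) = e ; p_{\omega(A)} ; \phi_A ; F(f)$ and $l_B = e ; p_{\omega(B)} ; \phi_B$, which agree because $e$ factors through $\epsilon_f$ and $\epsilon_f$ equalizes the defining pair. (ii) Universality: given any cone $(X, (x_T)_T)$, the tuple $x := \langle x_S \rangle_{S \in \Omega} : X \to P$ equalizes every defining pair by the cone law for $X$, hence factors through each $\epsilon_f$ and so through the \dag-intersection, yielding $\mu : X \to L$ with $\mu ; e = x$ and therefore $\mu ; l_T = x_T$ for all $T$; this $\mu$ is unique, since $e$ is an isometry (hence monic) and the cone equations force $\mu ; e = x$. (iii) Normalization is automatic:
\[
\sum_{S \in \Omega} l_S ; l_S^\sdag \;=\; e ; \Bigl( \sum_{S \in \Omega} p_S ; p_S^\sdag \Bigr) ; e^\sdag \;=\; e ; e^\sdag \;=\; \id_L ,
\]
using that $e$ is an isometry and that the biproduct projections satisfy $\sum_{S \in \Omega} p_S ; p_S^\sdag = \id_P$. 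Thus $L$ is an ordinary limit obeying the normalization condition, i.e.\ a \dag-limit. (Sanity checks: the empty diagram gives $P = L = 0$; a discrete diagram with $\Omega = \Ob(\cat J)$ gives $L = P = \bigoplus_S F(S)$, agreeing with Lemma~\ref{dagbiproducts}.)

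I do not expect any individual step to be hard: once the construction is written down it is the classical ``limit from products and equalizers'' recipe, \dag-ified. The subtleties are organisational. The first is that $L$ must be built inside the biproduct of the \emph{supporting} objects only, which is exactly what makes the normalization sum range over $\Omega$ and drop out in~(iii). The second is that the supporting-subset hypothesis is used essentially --- to define and control the legs $l_T$ to objects outside $\Omega$ via the chosen composites $\phi_T$. Finally, unitarity is never invoked directly in this argument; it enters solely through Lemma~\ref{dagintersections} (which needs it, via the ``square root of $n$'' of Lemma~\ref{rootn}), so the one thing to check carefully is that that lemma genuinely applies here --- and it does, since we have assumed binary \dag-equalizers and binary \dag-biproducts.
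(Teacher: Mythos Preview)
Your proof is correct and follows essentially the same strategy as the paper: form the \dag\-biproduct $P$ over the supporting subset $\Omega$, carve out the cone conditions by \dag\-equalizers inside $P$, take the \dag\-intersection of the resulting isometries (using Lemma~\ref{dagintersections}, which is where unitarity enters), and observe that normalization drops out from $e;e^\sdag=\id_L$ together with $\sum_{S\in\Omega} p_S;p_S^\sdag=\id_P$. The only organizational difference is that the paper groups the constraints by \emph{target object}~$T$ --- forming, for each $T$, a single (multi-way) \dag\-equalizer of all maps $[f]=p_{\sigma(f)};F(f)$ with $f:\sigma(f)\to T$ and $\sigma(f)\in\Omega$ --- whereas you handle one \emph{arrow} at a time with binary \dag\-equalizers, at the cost of fixing auxiliary paths $\phi_T$; the two families of isometries cut out the same subobject of $P$, so the constructions coincide.
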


\jbeginproof
If a \dag\-category has all finite \dag\-limits then it has these three constructions; the zero object is the \dag\-limit of the empty diagram, binary \dag\-equalizers are manifestly \dag\-limits, and binary \dag\-biproducts are \dag\-limits by Lemma~\ref{dagbiproducts}.

Conversely, consider a unitary \dag\-category \cat{C} with a zero object, binary \dag\-equalizers and binary \dag\-biproducts. By Lemma~\ref{fde} such a category actually has all finite \dag\-equalizers, and it is straightforward to obtain all finite \dag\-biproducts from binary \dag\-biproducts. Since finite biproducts exist the category is enriched in commutative monoids, and so the notion of a \dag\-limit is well-defined. Consider a diagram $F:\cat{J} \to \cat{C}$, with a chosen supporting subset $\Omega \subseteq \Ob(\cat J)$. We will show that this has a \dag\-limit.

If $\Omega$ is empty then \cat{J} must also be empty, and the \dag\-limit of $F$ is given by the zero object in \cat{C}. Otherwise, form the \dag\-biproduct in \cat{C} of the images $F(S)$ of the objects in the supporting subset, for all $S \in \Omega$. We denote this \dag\-biproduct by $\xbigoplus _{F(\Omega)}$, and write the projections onto the factors as $p_S: \xbigoplus _{F(\Omega)} \to F(S)$ for all $S \in \Omega$.

For each $T \in \Ob(\cat J)$, denote by $A_T$ the set of arrows in \cat{J} which go from an object in $\Omega$ to $T$, and for each arrow $f \in A_T$ denote its domain supporting object by $\sigma(f) \in \Omega$, so we have $f:\sigma(f) \to T$. For each $f \in A_T$, we can construct a morphism $[f]:\xbigoplus _{F(\Omega)} \to F(T)$ as the following composite:
{\newcommand\ts{\hspace{4pt}}
\begin{equation}
\textstyle [f]:=\xbigoplus _{F(\Omega)}
\rTo ^{\textstyle \ts p _{\sigma(f) \ts}}
F(\sigma(f))
\rTo ^{\textstyle \ts F(f)\ts}
F(T)
\end{equation}}Let $e_T : E_T \to \xbigoplus _{F(S)}$ be the \dag\-equalizer in \cat{C} of the arrows $[f]$ for all $f \in A_T$.

Our candidate for the \dag\-limit is the \dag\-intersection of the isometries $e_T$, over all objects $T \in \Ob(\cat J)$. We denote this \dag\-intersection by \mbox{$\pi_T; e_T:P \to {\xbigoplus} _{F(\Omega)}$}, which has the same value  for any $T \in \Omega$; the morphisms $\pi_T:P \to E_T$ are a family of isometric pullback projections, which are guaranteed to exist by Lemma~\ref{dagintersections}. The \dag\-limit maps to the objects in the supporting subset are \mbox{$l_S:=\pi_T; e_T;p _S:P \to F(S)$} for any $T \in \Ob(\cat J)$, and  for all $S \in \Omega$.

We must show that these maps form a universal, normalized cone for the diagram. First, we show that the maps $l_S: P \to F(s)$ satisfy the normalization condition~(\ref{norm}):
\begin{align}
\nonumber \sum_{S \in \Omega} l_{S} ^\pdag; l _{S} ^\sdag
&= \sum_{S \in \Omega} \pi_T; e_T; p_S; (\pi_T; e_T; p_S) ^\sdag
\\
&= \sum_{S \in \Omega} \pi_T ^\pdag; e_T ^\pdag; p_S ^\pdag; p_S ^\sdag; e_T ^\sdag; \pi_T ^\sdag
\nonumber
\\[-4pt]
&= \pi_T ^\pdag; e_T ^\pdag; \shiftbrackets{3.4pt}{$\displaystyle \sum_{S \in \Omega} p _{S}^\pdag  ; p _{S} ^\sdag$} ; e_T ^\sdag ; \pi_T ^\sdag
\nonumber
\\
&= \pi_T ^\pdag; e_T ^\pdag; e_T ^\sdag; \pi_T ^\sdag = \pi_T ^\pdag; \pi_T ^\sdag = \id _E.
\end{align}

\jnoindent
To establish that the morphisms $l_S$   define a cone, we must show that the equation \mbox{$l_ {\sigma(f)};F(f) = l_{\sigma(g)}; F(g)$} is satisfied for all $T \in \Ob(\cat J)$ and all $f,g \in A_T$. By the definition of the cone maps \mbox{$l_{\sigma(f)} ; F(f) = \pi_T; e_T; [f]$}, and since $e_T;[f] = e_T;[g]$ we see that the cone property holds. To establish the universal property, consider a cone of morphisms $x_S:X \to F(S)$ for all $S \in \Omega$; the cone property is that for all $T \in \Ob(\cat J)$ and all $f,g \in A_T$, we have $x _{\sigma(f)} ; F(f) = x _{\sigma (g)} ; F(g)$. Let $\widetilde x: X \to {\xbigoplus} _{F(S)}$ be the unique morphism such that $\widetilde x; p _S = x_S$ for all $S \in \Omega$. Then by the cone property, for all $T \in \Ob (\cat J)$ and all $f,g \in A_T$ we have $\widetilde x; [f] = \widetilde x ; [g]$, and so for all $T \in \Ob(\cat J)$ there is a unique morphism $\chi_T : X \to E_T$ with $\widetilde x = \chi_T; e_T$. Since $(P,\pi_T)$ form a pullback of the morphisms $e_T$, there must in turn be a unique morphism $\widetilde \chi : X \to P$ such that $\widetilde \chi; \pi_T = \chi_T$. Since each $e_T$ has a retraction, $\widetilde \chi$ is also the unique morphism with the property that $\widetilde \chi; \pi_T; e_T = \chi_T; e_T= \widetilde x$. It follows that $\widetilde \chi$ is the unique morphism with $\widetilde \chi; \pi_T; e_T; p_S = \widetilde x; p_S$ for all $S \in \Omega$, and so it is also the unique morphism with $\widetilde \chi; l_S = x_S$. So $(P; l_S, S \in \Omega)$  indeed gives a \dag\-limit for the diagram $F: \cat{J} \to \cat{C}$, with $\Omega$ the supporting subset.
\end{proof}

\section{Embedding the scalars into a field}
\label{mainsection}

Our main theorem of this section is stated most naturally in a \emph{monoidal \dag\-category}. Conventionally, this means a monoidal category which is also a \dag\-category, such that the unit and associator natural isomorphisms are unitary. While this gives the category nicer properties as a whole, we will not need to use them. So, for our purposes, a monoidal \dag\-category can be simply taken to mean a monoidal category which is also a \dag\-category.

In any monoidal category, we define the \emph{scalars} to be the hom-set $\Hom(I,I)$. This will have a certain amount of extra structure, depending on the properties of the ambient category. At the very least, as is well-known, it is a commutative monoid, where monoid multiplication is given by morphism composition.

Our main result concerns the scalars in a monoidal \dag\-category with all finite \dag\-limits, which have the structure of a semiring with involution. We will prove the following theorem:

\begin{theorem}
\label{fieldtheorem}
In a nontrivial monoidal \dag\-category with simple tensor unit, and with all finite \dag\-limits, the involutive semiring of scalars has an involution-preserving embedding into an involutive field with characteristic 0 and orderable fixed field.
\end{theorem}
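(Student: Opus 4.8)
I would build the field in the expected stages --- make the scalars cancellative, invert them, pass to a ring of differences, then to a field of fractions --- using the \dag-structure both to carry the involution along and to force the resulting fixed field to be formally real. Write $S=\Hom(I,I)$; since $I\otimes I=I$ this is a commutative semiring under addition and composition, and the \dag-functor restricts to an involution $s\mapsto s^\sdag$ which is additive by~\eqref{dagadd} and, being a contravariant functor into a \emph{commutative} multiplication, multiplicative. Nontriviality forces $\id_I\neq 0$: if $\id_I=0_{I,I}$ then every $f:A\to B$ equals $\lambda_B^{-1};(\id_I\otimes f);\lambda_A=\lambda_B^{-1};(0\otimes f);\lambda_A=0$. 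Lemmas~\ref{additivelemma}, \ref{addlemma} and~\ref{fraclemma} now give that $S$ has cancellative addition, that $n\cdot x=n\cdot y\Rightarrow x=y$, and that each $n\cdot\id_I$ is invertible (inverse $\tfrac{\id_I}{n}$); hence $S$ has characteristic $0$ and contains a copy of $\mathbb{Q}_{\geq 0}$ that is pointwise fixed by the involution.

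\emph{Step 1: every nonzero scalar is invertible.} First, $S$ has no zero divisors. Given $t\neq 0$, form the \dag-kernel $k:K\to I$ of $t$ (the \dag-equalizer of $t$ and $0_{I,I}$), so $K$ is a \dag-subobject of $I$. Simplicity of $I$ forces $K$ to be the zero object, for if $K$ were unitarily isomorphic to $I$ then composing the unitary with $k$ would give an isometry $x:I\to I$ --- an \emph{invertible} scalar, as $x;x^\sdag=\id_I$ --- with $x;t=0$, forcing $t=0$. Thus $x;t=0\Rightarrow x=0$ for all $x$, so $S$ has no zero divisors. For invertibility, apply a \dag-limit to the two-leaf tree with leaves $I,I$, root $I$, and arrows $a,\id_I:I\to I$; feeding the cone $(I,\id_I,a)$ through the universal property and combining the resulting mediating map with the normalization condition yields, after a short manipulation, that $\id_I+a^\sdag a$ is invertible for every scalar $a$. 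A similar argument with a few more two-leaf \dag-limits (and invertibility of $\tfrac12$) upgrades this to: $a\neq 0\Rightarrow a^\sdag a$, hence $a$, is invertible. So $S$ is an involutive semifield.

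\emph{Step 2 (the main obstacle): $S$ embeds into a domain.} Since addition on $S$ is cancellative, $S$ embeds as a sub-semiring into its ring of differences $R$, a commutative ring with involution of characteristic $0$; it remains to show $R$ has no zero divisors, after which $F:=\operatorname{Frac}(R)$, with $(a/b)^\sdag:=a^\sdag/b^\sdag$, is the required field, and $S\hookrightarrow F$ preserves the involution. This is the delicate point. One checks easily that the Exchange Lemma~\ref{exchangelemma}, applied to scalars, is exactly the statement $x\,x^\sdag=0\Rightarrow x=0$ in $R$; but such nondegeneracy is not enough for integrality (for instance $\mathbb{Q}\times\mathbb{Q}$ with the trivial involution satisfies it). The essential extra input must be the \emph{universality} of \dag-limits, not merely the normalization condition: a hypothetical $\alpha\beta=0$ with $\alpha,\beta\neq 0$ in $R$ should be used, via the universal property of a suitably chosen finite \dag-limit over copies of $I$, to produce a nontrivial decomposition of $\id_I$ --- equivalently a \dag-subobject of $I$ distinct from $0$ and $I$ --- contradicting simplicity. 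I expect pushing this argument through cleanly to be the hard part of the whole proof.

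\emph{Step 3: the fixed field is orderable.} Let $F_0=\{x\in F:x^\sdag=x\}$; since $R$ is a domain one checks $F_0=\operatorname{Frac}(R_0)$, where $R_0$ is the \dag-fixed subring of $R$. It suffices to show $F_0$ is formally real, since then it is orderable by Artin--Schreier. Suppose $\sum_{i=1}^n x_i^2=0$ with $x_i\in F_0$. Clearing denominators and replacing the common denominator $d$ by the self-adjoint $d\,d^\sdag$, we may write $x_i=b_i/d$ with $0\neq d\in R_0$ and $b_i\in R_0$; multiplying through by $d^2$ gives $\sum_i b_i^2=0$ in $R$. Writing $b_i=s_i-t_i$ with $s_i,t_i\in S$ and setting $u_i:=s_i+s_i^\sdag$, $v_i:=t_i+t_i^\sdag$ (self-adjoint scalars), we get $2b_i=u_i-v_i$, so $\sum_i(u_i^2+v_i^2)=2\sum_i u_i v_i$ --- an identity in $S$. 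Now let $f$ and $g$ be the morphisms $I^{\oplus n}\to I$ whose $i$-th components (via the \dag-biproduct of Lemma~\ref{dagbiproducts}) are $u_i$ and $v_i$; a direct matrix computation gives $f^\sdag;f=\sum_i u_i^2$, $g^\sdag;g=\sum_i v_i^2$ and $f^\sdag;g=g^\sdag;f=\sum_i u_i v_i$, so our identity is precisely the hypothesis of the Exchange Lemma~\ref{exchangelemma}. Hence $f=g$, so $u_i=v_i$ for all $i$ (compose with the biproduct injections), whence each $b_i=0$ and each $x_i=0$. Thus $F_0$ is formally real, completing the proof.
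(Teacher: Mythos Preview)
Your outline follows the paper's architecture: pass from $S$ to the difference ring $D(S)$, then to its field of fractions, then invoke Artin--Schreier for orderability of the fixed field. The setup (commutativity, cancellative addition, characteristic~$0$ via Lemma~\ref{fraclemma}) and the no-zero-divisors half of your Step~1 are correct and match Lemma~\ref{lemmacancmult}. Your Step~3 is essentially the paper's Lemma~\ref{sumnonzerosquares}: the identity $\sum u_i^{2}+\sum v_i^{2}=2\sum u_iv_i$ in $S$ is exactly the hypothesis of the Exchange Lemma for the vectors with entries $u_i$ and $v_i$, and the conclusion $u_i=v_i$ forces each $b_i=0$. (In fact the $u_i,v_i$ detour is unnecessary: since each $b_i$ is self-adjoint, $b_i^{2}=b_i b_i^\sdag$, so writing $b_i=s_i-t_i$ already gives $\sum s_is_i^\sdag+\sum t_it_i^\sdag=\sum s_it_i^\sdag+\sum t_is_i^\sdag$, which is the Exchange hypothesis for the columns $(s_i)$ and $(t_i)$ as in the paper's proof.)

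The second half of your Step~1 --- bootstrapping from ``$1+a^\sdag a$ is invertible'' to ``every nonzero $a$ is invertible'' via ``a few more two-leaf \dag-limits'' --- is not argued, and the paper neither proves nor needs it. Even granting that $S$ is a semifield, this does not by itself make $D(S)$ a domain, so you would still owe precisely the implication you flag as hard in Step~2.

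That implication is the genuine gap. The paper fills it with Lemma~\ref{ABablemma}, which shows directly that $aA+bB=aB+bA$ with $a\neq b$ forces $A=B$ --- equivalently, that $D(S)$ has no zero divisors. The argument forms the \dag-equalizer $e=\bigl(\begin{smallmatrix}e_1\\e_2\end{smallmatrix}\bigr):E\to I\oplus I$ of the row vectors $(A\ \,B)$ and $(B\ \,A)$, feeds the cones $\bigl(\begin{smallmatrix}a\\b\end{smallmatrix}\bigr)$, $\bigl(\begin{smallmatrix}b\\a\end{smallmatrix}\bigr)$ and $\Delta_I$ through it, and extracts scalar identities among $a,b$ and the $e_i^\sdag e_j^{}$; a case split on whether $e_2^\sdag e_1^{}$ vanishes, together with additive and multiplicative cancellation, then yields $A=B$. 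Simplicity of $I$ enters only through scalar cancellability (Lemma~\ref{lemmacancmult}), not by exhibiting a proper \dag-subobject of $I$ from a hypothetical zero divisor as your sketch suggests; your instinct that a \dag-limit over copies of $I$ is the key tool is right, but the mechanism is this equalizer-and-cones calculation rather than a decomposition of~$\id_I$.
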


\jnoindent
The proof of this theorem will be given piece-by-piece throughout this section. Just to be clear, by `field' we mean a classical algebraic field: a commutative ring with multiplicative inverses for every nonzero element. By `characteristic 0' we mean that no finite sum of the form $1+1 + \cdots + 1$ gives zero. By `involutive semiring' and `involutive field' we mean a structure equipped with an order-2 automorphism that respects addition and multiplication, and by `fixed field' we mean the subfield on which the automorphism acts trivially. By `simple tensor unit', we mean that every monic map into the tensor unit is either zero or an isomorphism; in other words, it has no proper subobjects.

The connection between this theorem and the complex numbers is given by the following well-known characterization of the subfields of the complex numbers.\footnote{This theorem is often considered surprising, given that there seem to be `obvious' counterexamples: for example, a field of \emph{rational functions}, which has elements given by equivalence classes of ratios of complex polynomials $P(x)/Q(x)$, where $Q(x)$ is not the zero polynomial, and where \mbox{$P(x)/Q(x) \sim P'(x) / Q'(x)$ when $P(x) Q'(x) = P'(x) Q(x)$}. An embedding of such a field into the complex numbers is difficult to visualize, since it will be highly noncontinuous with respect to the natural topologies involved. To prove the theorem, take any field of characteristic~0 and at most continuum cardinality, and add to it a continuum of transcendentals, obtaining a field of precisely continuum cardinality. Then take the algebraic completion. The result is isomorphic to the complex numbers, since it is an algebraically-closed field of characteristic~0 and continuum cardinality.}
\begin{theorem}
The subfields of the complex numbers are precisely the fields of characteristic~0 which are at most of continuum cardinality.
\end{theorem}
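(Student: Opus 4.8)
The plan is to prove the two inclusions separately. The forward inclusion is immediate: any subfield $K \subseteq \mathbb{C}$ inherits characteristic $0$ from $\mathbb{C}$, and satisfies $|K| \le |\mathbb{C}| = 2^{\aleph_0}$, so it is a field of characteristic $0$ of at most continuum cardinality. All the content is in the converse: given an abstract field $K$ of characteristic $0$ with $|K| \le 2^{\aleph_0}$, we must exhibit an embedding $K \hookrightarrow \mathbb{C}$.

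For this I would use the theory of transcendence bases together with Steinitz's structure theorem for algebraically closed fields. Since $\operatorname{char} K = 0$, the prime field $\mathbb{Q}$ sits inside $K$; choose (via Zorn's lemma) a transcendence basis $B$ of $K$ over $\mathbb{Q}$, so that $K$ is algebraic over the purely transcendental subextension $\mathbb{Q}(B)$ and $|B| \le |K| \le 2^{\aleph_0}$. On the target side I would record the standard cardinality facts: $\overline{\mathbb{Q}} \subseteq \mathbb{C}$ is countable, while $|\mathbb{C}| = 2^{\aleph_0}$, and for any field $F$ of characteristic $0$ one has $|F| = \max(\aleph_0, \operatorname{trdeg}_{\mathbb{Q}} F)$ because passing to an algebraic closure does not increase cardinality. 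It follows that $\operatorname{trdeg}_{\mathbb{Q}} \mathbb{C} = 2^{\aleph_0}$, so $\mathbb{C}$ contains a family $B'$ algebraically independent over $\mathbb{Q}$ with $|B'| = |B|$.

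Now a bijection $B \to B'$ induces an isomorphism of purely transcendental extensions $\mathbb{Q}(B) \xrightarrow{\ \sim\ } \mathbb{Q}(B') \subseteq \mathbb{C}$ fixing $\mathbb{Q}$; composing with the inclusion gives an embedding $j : \mathbb{Q}(B) \hookrightarrow \mathbb{C}$. Since $K / \mathbb{Q}(B)$ is algebraic and $\mathbb{C}$ is algebraically closed, the isomorphism-extension theorem (again a Zorn's lemma argument, extending $j$ one algebraic element at a time) extends $j$ to an embedding $K \hookrightarrow \mathbb{C}$, which is what we want. Equivalently, and in the spirit of the footnote, one can adjoin to $K$ a set of $2^{\aleph_0}$ indeterminates and pass to the algebraic closure, obtaining an algebraically closed field of characteristic $0$ and cardinality exactly $2^{\aleph_0}$; by Steinitz's uniqueness theorem this is isomorphic to $\mathbb{C}$, and $K$ embeds into it.

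The main obstacle is not any isolated hard step but the reliance on several pieces of field-theoretic infrastructure — existence of transcendence bases, the behaviour of cardinality under algebraic closure, extension of embeddings along algebraic extensions, and the uniqueness of algebraically closed fields of given characteristic and (uncountable) cardinality — each resting on the axiom of choice. The conceptual crux, and the only genuinely nontrivial input, is the observation that $\mathbb{C}$ has transcendence degree exactly continuum over $\mathbb{Q}$, so that it is ``large enough'' to receive any characteristic-$0$ field of continuum size.
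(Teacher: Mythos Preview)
Your proposal is correct and matches the paper's own treatment: the paper does not give a formal proof but only the footnote sketch, which is exactly your second formulation --- adjoin a continuum of transcendentals to $K$, take the algebraic closure, and invoke Steinitz's uniqueness theorem for algebraically closed fields of characteristic~$0$ and continuum cardinality. Your first, more direct approach via a transcendence basis of $K$ and the isomorphism-extension theorem is the standard way to unpack that sketch, so there is nothing to add.
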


\jnoindent
It follows immediately that, if we have a monoidal \dag\-category satisfying the conditions of Theorem~\ref{fieldtheorem} for which the scalars are at most continuum cardinality, they must embed as a semiring into the complex numbers. However, we cannot guarantee that there will be an \emph{involution-preserving} embedding into the complex numbers, which translates the action of the \dag\-functor on the scalars into complex conjugation on the complex numbers. We deal with this in the next section.

In addition to the \dag\-limits which we studied in the previous section, Theorem~\ref{fieldtheorem} requires two extra conditions: nontriviality, and that the monoidal unit object is simple. Both are natural, in the sense that they prevent the theorem from being `obviously' false. A field is required to have $0 \neq 1$, and this translates to the condition that our category is nontrivial. Also, if we had a monoidal \dag\-category satisfying the conditions of the theorem, we could take the cartesian product of this category with itself; this has an obvious monoidal structure for which the monoidal unit \textit{does} have proper \dag\-subobjects, the scalars being pairs of scalars in the original category. Such a semiring can never embed into a field, since it contains \emph{zero divisors}, nonzero elements $a$ and $b$ which satisfy $ab=0$. Requiring the monoidal unit to lack proper subobjects blocks this obvious source of counterexamples.

\subsection*{The scalars as a semiring}

We begin by showing that the scalars in a monoidal category form a commutative monoid. We establish this with the classic argument due to Kelly and Laplaza \cite{kl80-cccc}, related to the Eckmann-Hilton argument\label{eharg}. We note that this commutativity property is the only reason that we prove Theorem~\ref{fieldtheorem} for the scalars in a monoidal category; it would hold for any commutative endomorphism monoid on an object without proper \dag\-subobjects.
\begin{lemma}
\label{eh}
In a monoidal category, the scalars are commutative.
\end{lemma}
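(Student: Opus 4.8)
The plan is to run the classic Eckmann–Hilton / Kelly–Laplaza argument, using the fact that in a monoidal category the scalars $\Hom(I,I)$ carry two a priori different binary operations — composition $;$ and tensor $\otimes$ — which share the same unit $\id_I$ and satisfy an interchange law, forcing both to coincide and to be commutative. First I would set up the two operations: for scalars $a,b : I \to I$, composition gives $a;b : I \to I$, and the tensor product gives $a \otimes b : I \otimes I \to I \otimes I$, which we identify with a scalar via the (strict) equality $I \otimes I = I$. Both operations are unital with unit $\id_I$: for composition this is immediate, and for tensor it follows because $\id_I \otimes b = b$ and $a \otimes \id_I = a$ under the strict unit identification.

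Next I would invoke the interchange law, which here is exactly the compatibility equation $(f \otimes g);(h \otimes j) = (f;h) \otimes (g;j)$ stated in the introduction, specialized to scalars: $(a \otimes b);(c \otimes d) = (a;c) \otimes (b;d)$ for all $a,b,c,d : I \to I$. With this in hand the Eckmann–Hilton manipulations are routine. Setting $b = c = \id_I$ gives $a \otimes d = a ; d$, so the two operations are in fact equal. Then $a;b = a \otimes b = (\id_I;a) \otimes (b;\id_I) = (\id_I \otimes b);(a \otimes \id_I) = b;a$, establishing commutativity of composition on $\Hom(I,I)$.

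The step that requires the most care is not conceptual but bookkeeping: being explicit about where the unit isomorphisms of the monoidal structure are used to make sense of $a \otimes b$ as an element of $\Hom(I,I)$, and checking that no nontrivial coherence data sneaks in. Since the paper works with a strict monoidal category — where $I \otimes I = I$ on the nose — this obstacle essentially evaporates, and the argument reduces to the three displayed equalities above; in the general (weak) case one would instead conjugate by the unitor $\lambda_I = \rho_I : I \otimes I \to I$ and appeal to coherence. I would present the strict version, noting parenthetically that the weak case is identical up to inserting unitors.
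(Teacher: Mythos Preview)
Your proposal is correct and is essentially the same Kelly--Laplaza / Eckmann--Hilton argument the paper uses. One minor discrepancy: contrary to your assumption, the paper does \emph{not} take the strict shortcut here --- its proof of this lemma is the general (weak) version, presented as a commutative cube with the unitors $\lambda_I,\rho_I$ appearing explicitly and the coherence equation $\rho_I=\lambda_I$ invoked as the key ingredient. Your strict-case calculation is exactly what that cube collapses to when the unitors are identities, and your parenthetical remark about inserting $\lambda_I=\rho_I$ in the weak case is precisely what the paper actually does; so the two proofs agree in substance, differing only in that the paper carries out the step you defer.
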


\jbeginproof
We present the standard commutative diagram in the form of a cube, which holds for any two scalars $a,b:I \to I$. The coherence equation $\rho _I = \lambda _I$ is essential.
\begin{equation}
\begin{diagram}[midshaft,nohug,width=35pt,height=15pt]
I & \rTo^a &&& I &&
\\
& \rdTo _b & {\vrule height 25pt width 0pt}  && \vLine & \rdTo^b &
\\
&& I &\rTo^{a} & \HonV && I
\\
\dTo <{\lambda _I ^{}} &&& \hspace{80pt} & \dTo <{\lambda_I ^{}} >{\rho _I ^{}} &&
\\
&&& {\vrule height 25pt width 0pt}&&&
\\
&&\uTo <{\lambda _I ^{-1}} > {\rho ^{-1}_I} &&&& \uTo >{\rho_I ^{-1}}
\\
I \otimes I & \hLine  & \VonH & \rTo _{\hspace{-20pt}a \otimes \id _I\hspace{-20pt}}& I \otimes I && 
\\
& \rdTo_{\id_I \otimes b} && {\vrule height 20pt width 0pt} && \rdTo ^{\id_I \otimes b} &
\\
& & I \otimes I & \rTo _{a \otimes \id_I} && & I \otimes I
\end{diagram}
\end{equation}
\vspace{-30pt}

\end{proof}

We next show that, if the monoidal category also has biproducts, the scalars form a commutative semiring. A semiring, sometimes called a \emph{rig}, is a structure similar to a ring but which is not required to have have additive inverses for all elements. In this paper a ring always has a multiplicative unit, and the zero element satisfies $0x=x0=0$ for all elements $x$ in the ring.

In a category with biproducts, the hom-sets have a commutative monoid structure as described by equation (\ref{cmonenrichment}). Interpreting this monoid structure as addition, and composition of scalars as multiplication, these structures combine to give the scalars in a monoidal category with biproducts the structure of a commutative semiring. To prove this, we need to show that $a (b+c) = (a  b) + (a  c)$ for all scalars $a,b,c$; this follows from naturality of the diagonal and codiagonal maps, as discussed earlier on page~\pageref{adddist}\label{adddiscussion}. We also require $0 a = a  0 = 0$ for all scalars $a$, which follows from the definition of the zero morphisms.

This commutative semiring of scalars acts in a natural way on the hom-sets of the category. For any morphism $f:A \to B$ and any scalar $a$, we define $a \cdot f: A \to B$ as follows:
\begin{equation}
\begin{diagram}[height=30pt,width=45pt]
A & \rTo ^{a \cdot f} & B
\\
\dTo < {\lambda_A} && \uTo >{\lambda _B ^{-1}}
\\
I \otimes A & \rTo_{a \otimes f} & I \otimes B
\end{diagram}
\end{equation}
In fact, this gives the each hom-set the structure of a \emph{semimodule} over the scalars, which is the natural notion of module extended from a ring to a semiring.

We now consider the extra structure given by the \dag\-functor and \dag\-biproducts. The \dag\-functor gives us an involution on the scalars, sending $a:I \to I$ to $a ^\sdag:I \to I$. This involution is order-reversing for multiplication, due to the contravariance of the \dag\-functor, and distributes over addition as explained in the discussion around equation (\ref{dagadd}). This gives the scalars the structure of an \emph{involutive semiring}. In the case that the unit isomorphisms associated to the monoidal structure are unitary, the hom-sets then become \emph{involutive semimodules} for this semiring, but we will not need this extra structure.

One aim of this research is to understand the categorical structure of the complex numbers, which is certainly an involutive semiring, so the category theory is generating the correct kind of structure. Of course, the complex numbers are far more than just a semiring, and we will now see how some of the necessary extra properties arise.

\subsection*{Embedding into a field}

To achieve our goal of embedding the scalars into a field, it is clear that additive cancellability is a necessary property. We demonstrated this for all hom-sets in \dag\-categories with finite \dag\-biproducts and finite \dag\-equalizers in Lemma~\ref{additivelemma}. Another property which is clearly necessary is cancellable multiplication.
\begin{defn}
\label{cancmult}
A commutative semiring has \emph{cancellable multiplication} when, for any three elements $a,b,c$ in the semiring, $ac=bc, \, c \neq 0 \, \Rightarrow \,a=b$.
\end{defn}

\jnoindent
We now show that the scalars have this property in any category of the type which we are considering. The condition that the monoidal unit has no proper \dag\-subobjects is clearly crucial here, but this is far from the only role played by this condition in proving the theorem.
\begin{lemma}
\label{lemmacancmult}
In a monoidal \dag\-category with simple tensor unit, a zero object and finite \dag\-equalizers, the scalars have cancellable multiplication.
\end{lemma}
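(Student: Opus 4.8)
The plan is to use the \dag\-equalizer of the parallel pair $a,b:I\to I$ together with simplicity of the tensor unit to force $a=b$. Suppose $a,b,c:I\to I$ satisfy $ac=bc$ with $c\neq 0$. First I would note, using commutativity of the scalars (Lemma~\ref{eh}), that this hypothesis can be rewritten as $c;a=c;b$, so that the single scalar $c$ becomes a cone for the parallel pair $(a,b)$. Next, form the \dag\-equalizer $(E,e)$ of $a$ and $b$, so that $e:E\to I$ is an isometry satisfying $e;a=e;b$, and let $\tilde c:I\to E$ be the unique factorization with $\tilde c;e=c$.

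Now I would make a case distinction coming directly from the simple-tensor-unit hypothesis: since $e$ is an isometry it is in particular monic, and a monic into $I$ is either zero or an isomorphism. It cannot be zero, for then $c=\tilde c;e=\tilde c;0_{E,I}=0_{I,I}$ (using the zero object to see that post-composition with a zero morphism is again zero), contradicting $c\neq 0$. Hence $e$ is an isomorphism, so it is epic, and cancelling it on the left of $e;a=e;b$ gives $a=b$. This is exactly the statement that multiplication of scalars is cancellable.

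I do not expect a genuine obstacle here --- the argument is short and each step is essentially forced. The only points that require a little care are bookkeeping ones: checking that, under diagrammatic composition order and with the Eckmann-Hilton commutativity of Lemma~\ref{eh}, the equation $ac=bc$ really does present $c$ as a cone over the equalizer diagram for $(a,b)$; and observing that the isometry (hence monic) property of the equalizing map $e$ is precisely what lets the ``$I$ has no proper subobjects'' hypothesis bite. The zero object is needed only to make sense of ``$c\neq 0$'' and to kill $c$ in the degenerate case; in fact an ordinary equalizer would suffice in place of a \dag\-equalizer, but \dag\-equalizers are what the ambient hypotheses provide.
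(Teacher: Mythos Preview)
Your proof is correct and follows essentially the same approach as the paper: form the \dag\-equalizer $e:E\to I$ of the pair $a,b$, use $c\neq 0$ factoring through $e$ to rule out $e=0$, and then invoke simplicity of $I$ to conclude $e$ is an isomorphism and hence $a=b$. The only cosmetic difference is that the paper phrases this as a proof by contradiction (assuming $a\neq b$ and deriving a proper subobject of $I$), whereas you argue directly; your closing remark that an ordinary equalizer would suffice is also accurate, since the simplicity hypothesis is stated for arbitrary monics.
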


\jbeginproof
Suppose that the scalars did not have cancellable multiplication. Then there would exist scalars $a,b,c$ with $c \neq0$, such that $a \neq b$ but $ac=bc$. We consider the following commuting diagram:
\begin{equation}
\begin{diagram}[nohug,midshaft,width=20pt,height=35pt]
&
&\rnode{1}{I}&
&&&
\\
\rnode{2}{E} && \rInto^{e} && \rnode{3}{I} & \hspace{35pt} \pile{ \rTo ^a \\ \rTo _b} & I
\nccurve[angleA=180,angleB=80]{->}12\Bput {\tilde c}
\nccurve[angleA=0,angleB=100]{->}13 \Aput c
\end{diagram}
\end{equation}
The \dag\-equalizer morphism $e:E \to I$ gives a \dag\-subobject of $I$. It is not zero, since $c$ factors through it and $c\neq 0$; also, since $a \neq b$, it cannot be an isomorphism. It follows that $I$ has a proper \dag\-subobject, but this contradicts our hypothesis. It follows that the scalars have cancellable multiplication.
\end{proof}

As a first step towards embedding the scalars into a field, we first embed them into a ring. Given our semiring $S$ of scalars, we can construct its \emph{difference ring} $D(S)$. Elements of $D(S)$ are equivalence classes of ordered pairs $(a,b)$ of elements of $S$, which we write using the suggestive notation $a-b$. The equivalence relation is given by
\begin{equation}
a-b \sim c-d \quad \textrm{iff} \quad a+d = c+b.
\end{equation}
It is a standard exercise to show that this is symmetric, transitive and reflexive, for which we rely on the fact that the scalars have cancellable addition. Addition and multiplication are defined on representatives of the equivalence classes in the familiar algebraic way:
\begin{align}
(a-b)+(c-d) &= (a+c)-(b+d)
\\
(a-b)(c-d) &= (ac+bd)-(ad+bc)
\end{align}
These are well-defined on equivalence classes.

We see that the scalars in our category embed into their difference semiring, under the obvious mapping $a \mapsto a-0$. For two elements to be sent to the same element of the difference ring would mean that $a-0\sim b-0$, but applying the definition of the equivalence relation then gives $a=b$, so the mapping is faithful.

As we will see, the difference ring embeds into a field if and only if it has cancellable multiplication. From Definition~\ref{cancmult}, this condition is
\begin{equation}
(a-b)(c-d) \sim (a-b)(e-f), \,\, a-b \nsim 0 \, \Rightarrow \, c-d\sim e-f
\end{equation}
for all choices of elements $a,b,c,d,e,f \in S$. Using the definition of the equivalence relation to write this directly in terms of the elements of the underlying semiring, we obtain
\begin{equation}
a(c+f)+b(d+e) = a(d+e) + b(c+f), \, a \neq b \, \Rightarrow \, c+f = d+e.
\end{equation}
Defining $A := c+f$ and $B := d+e$, this reduces to the condition
\begin{equation}
aA + bB = aB+bA, \, a \neq b \, \Rightarrow \, A = B.
\end{equation}
We now show that this holds in any category of the type we are working with. In some ways, this condition resembles that of the exchange lemma~\ref{exchangelemma}, but it is logically independent from it.

\begin{lemma}
\label{ABablemma}
In a monoidal \dag\-category with simple tensor unit and all finite \dag\-limits, any choice of scalars $A,B,a,b : I \to I$ satisfies the implication
\[
aA+bB=aB+bA, \, a \neq b \,\Rightarrow\, A=B.
\]
\end{lemma}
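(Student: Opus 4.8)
\noindent
The plan is to argue in the style of Lemmas~\ref{additivelemma} and~\ref{exchangelemma}, around a single \dag\-equalizer, but to feed in the simplicity of $I$ at the one place those lemmas did not need it. First dispose of the degenerate cases: if $a=0$ then the hypothesis reads $bB=bA$, and $b\neq 0$ since $a\neq b$, so $A=B$ by cancellable multiplication (Lemma~\ref{lemmacancmult}); the case $b=0$ is symmetric. So assume $a,b\neq 0$ and $a\neq b$.

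Form the \dag\-biproduct $I\oplus I$ and the \dag\-equalizer $(E,e)$ of the parallel pair $(a\ b),(b\ a):I\oplus I\to I$, which exists as the category has all finite \dag\-limits; write $e_i:=e;p_i:E\to I$. Three morphisms $I\to I\oplus I$ are cones for this pair: the diagonal $\Delta$, because $\Delta;(a\ b)=a+b=\Delta;(b\ a)$ by commutativity of the scalars; and $\bigl(\begin{smallmatrix}A\\B\end{smallmatrix}\bigr)$ and $\bigl(\begin{smallmatrix}B\\A\end{smallmatrix}\bigr)$, because the hypothesis $aA+bB=aB+bA$ is exactly the statement that $\bigl(\begin{smallmatrix}A\\B\end{smallmatrix}\bigr);(a\ b)=\bigl(\begin{smallmatrix}A\\B\end{smallmatrix}\bigr);(b\ a)$, and likewise for $\bigl(\begin{smallmatrix}B\\A\end{smallmatrix}\bigr)$. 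Let $\delta,u,v:I\to E$ be the induced mediating morphisms, so that $\delta;e_1=\delta;e_2=\id_I$, $u;e_1=A$, $u;e_2=B$, $v;e_1=B$ and $v;e_2=A$. Since the swap on $I\oplus I$ interchanges the two legs of the parallel pair, uniqueness of \dag\-limits (Lemma~\ref{uniqueness}) lifts it to a unitary $w:E\to E$ with $w;e_1=e_2$, $w;e_2=e_1$, $\delta;w=\delta$, $u;w=v$ and $v;w=u$. Consequently, if we can prove $e_1=e_2$ (equivalently $w=\id_E$, equivalently $u=v$) then $A=u;e_1=u;e_2=B$ and we are finished.

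Proving $e_1=e_2$ is where I expect the main difficulty to lie, and it is where simplicity of $I$ has to be used --- which is also why this lemma is logically independent of the exchange lemma, whose proof needed no simplicity hypothesis. The lever is that, since $a\neq b$, the \dag\-equalizer of the two scalars $a,b:I\to I$ is a \dag\-subobject of $I$ that is not an isomorphism, hence by simplicity is the zero object, so \emph{every} morphism $x$ with $x;a=x;b$ is zero. The cone equation $e_1;a+e_2;b=e_1;b+e_2;a$ satisfied by $e$ says precisely that the ``difference of $e_1$ and $e_2$'' is such an $x$; the work is to realize this difference as a genuine morphism of the category in the absence of additive inverses --- the natural candidate comes from post\-composing $e$ with a \dag\-limit built from $I\oplus I$ that behaves like the orthogonal projection onto the antidiagonal, with a normalization supplied by Lemma~\ref{fraclemma} --- and then to verify, using only semiring-valid manipulations, both that this morphism satisfies $x;a=x;b$ and that its vanishing forces $e_1=e_2$. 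Assembling this construction and checking these two points without using subtraction is the technical heart of the argument.
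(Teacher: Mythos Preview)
Your setup is dual to the paper's: you equalize $(a\ b),(b\ a)$ and treat $\bigl(\begin{smallmatrix}A\\B\end{smallmatrix}\bigr)$ as a cone, whereas the paper equalizes $(A\ B),(B\ A)$ and treats $\bigl(\begin{smallmatrix}a\\b\end{smallmatrix}\bigr),\bigl(\begin{smallmatrix}b\\a\end{smallmatrix}\bigr)$ as cones. Your reduction to ``$e_1=e_2$'' is correct as far as it goes, but notice what it asks: showing that the \dag\-equalizer of $(a\ b),(b\ a)$ coincides with the diagonal is precisely the assertion that every cone $\bigl(\begin{smallmatrix}x\\y\end{smallmatrix}\bigr)$ has $x=y$ --- a statement at least as strong as the lemma you are trying to prove, now for morphisms $E\to I$ rather than scalars. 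So the reduction does not simplify the problem.

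The step you label ``the technical heart'' is genuinely missing, and your proposed mechanism does not work. The ``antidiagonal'' you want is the \dag\-kernel $k:K\to I\oplus I$ of $\nabla$, whose defining equation is $k_1+k_2=0$. In any model whose scalars lack additive inverses this forces $k=0$ and $K=0$, so $e;k^\dag=0$ holds trivially for \emph{every} $e$ and its vanishing tells you nothing about $e_1$ versus $e_2$. Your use of simplicity --- that any $x$ with $x;a=x;b$ must vanish --- is sound, but the cone equation $e_1a+e_2b=e_1b+e_2a$ is not of the form $x;a=x;b$; turning it into that form is exactly subtraction, which you have correctly forbidden yourself. There is no \dag\-limit of $I\oplus I$ that manufactures the missing ``$e_1-e_2$'' in a semiring without negatives.

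The paper's route avoids this trap by staying on the scalar side throughout. From $p=\bigl(\begin{smallmatrix}a\\b\end{smallmatrix}\bigr)$, $q=\bigl(\begin{smallmatrix}b\\a\end{smallmatrix}\bigr)$ and $e;e^\dag=\id_E$ one extracts four scalar identities expressing $a,b$ in terms of $a,b$ and the scalars $e_i^\dag e_j$; cross-multiplying two of them and cancelling additively yields $b^2\,e_2^\dag e_1=a^2\,e_2^\dag e_1$, so either $e_2^\dag e_1=0$ or $a^2=b^2$. The $a^2=b^2$ branch is dispatched via $c:=a+b$ (one shows $c=0$ and then $2bB=2bA$), and the remaining case $e_1^\dag e_2=e_2^\dag e_1=0$ is finished using the cone equation for $e$ together with the diagonal cone, which pins down $e_2^\dag e_2=1$. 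Simplicity of $I$ enters only through multiplicative cancellation (Lemma~\ref{lemmacancmult}), not through an equalizer-of-$a,b$ argument. If you want to salvage your dual setup, you would need an analogue of this case analysis for the scalars $e_i^\dag e_j$, not an antidiagonal projection.
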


\jbeginproof
We have already shown that the scalars in such a category are commutative and have cancellable addition and multiplication, and we will use these properties throughout. Let $A,B,a,b$ be scalars satisfying $aA+bB = aB+bA$ and $a \neq b$. If $a=0$ then $bB=bA$, and cancelling the nonzero $b$ we obtain $B=A$; the case $b=0$ is similar. Conversely, if $A=0$ then $bB=aB$, and $B=A=0$ is the only possibility, or $B$ would cancel contradicting our assumption that $a \neq b$; the case $B=0$ is similar. In each of these cases, therefore, the implication holds.

We now consider the case in which none of the four scalars are zero. We construct the following commutative diagram where $(E,e)$ is a \dag\-equalizer for the parallel pair $(A \,\,\, B)$ and $(B \,\,\, A)$, and $(I,p)$ and $(I,q)$ are cones:
\begin{equation}
\begin{diagram}[nohug,midshaft,width=30pt,height=40pt]
&
\mathmove{5pt}{-25pt}{\tilde p}
&\rnode{1}{I}&
\mathmove{5pt}{13pt}{p = \big(
\hspace{-2pt}
{
\begin{smallmatrix} a \\[2pt] b \end{smallmatrix}
}
\hspace{-2pt} \big)
\hspace{-43pt}
}
&&&
\\
\rnode{2}{E} && \rInto^{e= \big(
\hspace{-2pt}
{
\begin{smallmatrix} e_1 \\[2pt] e_2 \end{smallmatrix}
}
\hspace{-2pt} \big)} && \rnode{3}{I \oplus I} & \hspace{35pt} \pile{ \rTo ^{(A \,\,\, B)} \\ \rTo _{(B \,\,\, A)}} & I
\\
& \mathmove{-3pt}{-25pt}{\tilde q} & \rnode{4}{I} & \mathmove{-3pt}{13pt}{q = \big(
\hspace{-2pt}
{
\begin{smallmatrix} b \\[2pt] a \end{smallmatrix}
}
\hspace{-2pt} \big)
\hspace{-43pt}
}
 &&&
\nccurve[angleA=180,angleB=80]{->}12
\nccurve[angleA=0,angleB=100]{->}13
\nccurve[angleA=180,angleB=-80]{->}42
\nccurve[angleA=0,angleB=-100]{->}43
\end{diagram}
\end{equation}
For each cone, we denote the unique factorization through the equalizer with a tilde. Using the matrix calculus and the \dag\-equalizer equation $e; e^\sdag = \id_E$ we see that $p = p; e ^\sdag; e$ and $q = q; e^\sdag; e$, and writing these out in components, we obtain the following:
\begin{align}
a &=  a; e_1^\sdag ; e ^\pdag _1 + b; e_2 ^\sdag ; e^\pdag _1\label{xeqn}
\\
\label{beqn}
b &= a; e_1 ^\sdag; e^\pdag _2 + b; e_2 ^\sdag; e^\pdag _2
\\[5pt]
\label{yeqn}
b &= b; e_1 ^\sdag ; e_1^\pdag + a; e_2 ^\sdag; e_1 ^\pdag
\\
\label{aeqn}
a &= b; e_1 ^\sdag; e_2 ^\pdag + a; e_2 ^\sdag; e_2 ^\pdag
\end{align}
The first two equations come from the components of $p$, and the second two from the components of $q$.

Multiplying equation (\ref{xeqn}) by $b$ and (\ref{yeqn}) by $a$ and equating the right-hand sides, this \mbox{gives\hspace{20pt}}
\begin{equation}
ba; e_1 ^\sdag; e^\pdag _1 + b^2; e_2 ^\sdag; e^\pdag _1
=
ab; e_1 ^\sdag; e^\pdag _1 + a^2; e_2 ^\sdag; e^\pdag _1.
\end{equation}
We apply commutativity and additive cancellability to obtain
\begin{equation}
b^2; e_2 ^\sdag; e^\pdag _1 = a^2; e_2 ^\sdag; e^\pdag _1.
\end{equation}
We note that the quantity $e_2 ^\sdag; e^\pdag _1$ is a scalar. Either it is zero, or it is nonzero and it can be cancelled to give $a^2 = b^2$. We will consider these cases separately. First we assume that $e_2 ^\sdag ; e^\pdag _1 \neq 0_{I,I}$ and $a^2=b^2$. Defining $c := a+b$, we see that
\begin{align}
ca &= a^2 + ba = b^2 + ab = cb.
\end{align}
So $ca = cb$, and if $c \neq 0$ it will cancel from both sides to give $a=b$. However, by assumption $a \neq b$, and so we must have $c=0$ and $a+b=0$. Returning to our equation $aA + bB = aB+bA$ and adding $b(A+B)$ to both sides, we obtain
\begin{align}
aA + bB + b(A+B) &= aB + bA +b(A+B)
\nonumber
\\
\Rightarrow  (a+b)A + 2bB &= (a+b)B + 2 bA
\nonumber
\\
\Rightarrow 2bB = 2bA. \hspace{19.5pt}&
\end{align}
Since $2 : I \to I$ is given by $\Delta _I; \nabla _I = \Delta _I; (\Delta _I) ^\sdag$ where $\Delta_I : I \to I \oplus I$ is the diagonal for the biproduct, by Lemma~\ref{nondegen} it must be nonzero, and so it can be cancelled from both sides. By assumption $b \neq 0$, and so it can be cancelled as well. This gives $B=A$ as required. The only unresolved case is $e_2 ^\sdag; e_1 ^\pdag = 0$.

Alternatively, we could have multiplied equation (\ref{beqn}) by $a$ and equation (\ref{aeqn}) by $b$ and equated the right-hand sides. This leads to a similar conclusion: either $e_1 ^\sdag; e_2 ^\pdag \neq 0$ and $A=B$, or $e_1 ^\sdag; e_2 ^\pdag = 0$ and the theorem is not immediately resolved. Since this line of argument is independent from the previous one, the only remaining case to consider is that $e_2 ^\sdag; e_1 ^\pdag = e_1 ^\sdag; e_2 ^\pdag = 0$.

We have not yet used the fact that the equalizer $e: E \to I \oplus I$ is a cone, which is asserted by the following equation:
\begin{equation}
e_1; A + e_2; B = e_1; B + e_2; A.
\end{equation}
Composing on the left with $e_2 ^\sdag$, we obtain
\begin{equation}
e_2^\sdag; e^\pdag _1; A + e_2 ^\sdag; e^\pdag _2; B = e_2 ^\sdag; e^\pdag _1; B + e_2 ^\sdag; e^\pdag _2; A.
\end{equation}
Applying $e_2 ^\sdag; e^\pdag _1 = 0$, this gives
\begin{equation}
e_2 ^\sdag; e^\pdag _2; B = e_2 ^\sdag; e^\pdag _2; A.
\label{YXeqn}
\end{equation}
To deal with this we need to know the value of the scalar $e_2 ^\sdag; e^\pdag _2$. We observe that $\Delta _I = \jbigl \hspace{-1pt} \begin{smallmatrix} \id_I \\ \id_I \end{smallmatrix} \hspace{-1pt} \jbigr:I \to I \oplus I$ is a cone, and so there exists some $\widetilde \Delta _I:I \to E$ satisfying $\widetilde \Delta _I; e = \Delta _I$. Using the \dag\-equalizer equation $e; e^\sdag  = \id _E$ we obtain $\widetilde \Delta _I = \Delta _I; e_{\pdag} ^\sdag= e_1 ^\sdag + e_2 ^\sdag$. Postcomposing with $e_2$ gives the equation
\begin{align}
\widetilde \Delta _I; e_2 &= 1 = e_1 ^\sdag ; e^\pdag _2 + e_2 ^\sdag ; e^\pdag _2.
\label{reqn}
\end{align}
Applying the assumption that $e_1 ^\sdag; e^\pdag _2 = 0$, this gives $e _2 ^\sdag; e^\pdag _2 = 1$. Equation (\ref{YXeqn}) then gives $B=A$ as needed, which completes the proof.
\end{proof}
\jnoindent
At the cost of a more long-winded proof we have avoided using the \dag\-functor explicitly here. We are certainly relying on it indirectly, however, as we require that addition in the semiring is cancellable; this was proved in Lemma~\ref{additivelemma}, and it does not seem that the use of the \dag\-functor in that proof can be avoided.

For any nontrivial commutative ring $R$ with cancellable multiplication, we can obtain its \emph{quotient field} $Q(R)$ into which $R$ embeds. Elements of $Q(R)$ are equivalence classes of pairs $(s,t)$ of elements of $R$ with $t \neq 0$. We write these pairs in the form $\frac{s}{t}$, to resemble a fraction. The equivalence relation is given by
\begin{align}
\frac{s}{t} \sim \frac{u}{v} \quad &\mathrm{iff} \quad sv=ut.
\end{align}
This is symmetric, transitive and reflexive, as required. We rely on the cancellable multiplication to demonstrate transitivity. Multiplication and addition are defined on representatives of the equivalence classes as if they were conventional fractions:
\begin{align}
\frac{s}{t} \cdot \frac{u}{v} &= \frac{su}{tv}
\\
\frac{s}{t} + \frac{u}{v} &= \frac{sv + ut}{tv}
\end{align}
These operations are well-defined on the equivalence classes. Furthermore, the ring $R$ embeds into $Q(R)$ under the mapping $r \mapsto \frac{r}{1}$, and this is faithful since $\frac{r}{1} \sim \frac{s}{1}\, \Rightarrow\, r=s$. It is straightforward to see that this embedding preserves multiplication and addition.

We require the commutative ring $R$ to be nontrivial, satisfying $0 \neq 1$, since a field must satisfy this by definition. This leads to the requirement that the monoidal category from which we obtain our scalars must be nontrivial, having more than one morphism. We must require this explicitly, since the one-morphism category otherwise satisfies all of our conditions: it is a monoidal \dag\-category with all finite \dag\-limits, for which the monoidal unit object has no proper \dag\-subobjects.

Altogether, for a nontrivial monoidal \dag\-category with all finite \dag\-limits, in which the monoidal unit is simple, we have shown that the commutative semiring $S$ of scalars embeds into the commutative difference ring $D(S)$; that this ring has cancellable multiplication; and that any ring $R$ with cancellable multiplication embeds into its quotient field $Q(R)$. It follows that the semiring $S$ embeds into $Q(D(S))$, and so the scalars in our monoidal category embed into a field.

\subsection*{Establishing the characteristic}

We next show that the semiring $S$ of scalars has characteristic~0. Since we have shown that this semiring embeds into the field $Q(D(S))$, it follows that this field must also have characteristic~0.

\begin{lemma}
\label{nzcharsub}
In a nontrivial monoidal \dag\-category with finite \dag\-biproducts and \dag\-equalizers, for which the monoidal unit object has no \dag\-subobjects, the scalars have characteristic~0.
\end{lemma}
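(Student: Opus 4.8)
The plan is to prove directly that for every positive integer $n$ the $n$-fold sum $n \cdot \id_I$ is nonzero, which is precisely the assertion that the scalar semiring has characteristic~$0$. The mechanism is to realise $n \cdot \id_I$ as a morphism composed with its own adjoint, and then to invoke nondegeneracy of the \dag-functor, which is available by Lemma~\ref{nondegen} since we have a zero object (the nullary \dag-biproduct) and finite \dag-equalizers.

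In detail, I would fix $n \geq 1$ and form the $n$-fold \dag-biproduct $I^{\oplus n}$, with injections $i_j : I \to I^{\oplus n}$, projections $p_j : I^{\oplus n} \to I$, diagonal $\Delta : I \to I^{\oplus n}$ (the unique map with $\Delta; p_j = \id_I$ for all $j$) and codiagonal $\nabla : I^{\oplus n} \to I$ (the unique map with $i_j; \nabla = \id_I$ for all $j$). By the evident $n$-ary generalisation of Lemma~\ref{daggerdcod} we have $\Delta^\sdag = \nabla$, and by the biproduct matrix calculus --- equivalently, by unwinding the definition~\eqref{cmonenrichment} of the commutative-monoid enrichment --- the composite $\Delta; \nabla$ equals the $n$-fold sum $\id_I + \cdots + \id_I = n \cdot \id_I$. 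Hence $n \cdot \id_I = \Delta; \Delta^\sdag$. Now if $n \cdot \id_I = 0_{I,I}$, nondegeneracy forces $\Delta = 0_{I, I^{\oplus n}}$, and then $\id_I = \Delta; p_1 = 0_{I,I}$, i.e.\ $0 = 1$ in the scalars; but the category is nontrivial, so this is impossible. Therefore $n \cdot \id_I \neq 0_{I,I}$ for all $n \geq 1$, which is exactly characteristic~$0$. Combined with the embedding of the scalars into $Q(D(S))$ established above, the field $Q(D(S))$ then also has characteristic~$0$.

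There is no serious obstacle here; the proof is a couple of lines once the right identity is in hand. The only points deserving a moment's care are the bookkeeping identity $n \cdot \id_I = \Delta; \Delta^\sdag$, which is where Lemma~\ref{daggerdcod} and the matrix calculus enter, and the fact that ``nontrivial'' must be read as $0 \neq 1$ in the scalar semiring (equivalently $\id_I \neq 0_{I,I}$) --- this is the hypothesis that prevents the difference ring $D(S)$, and hence its quotient field, from degenerating, and it is exactly what the final step of the argument consumes. Simplicity of the tensor unit plays no role in this particular lemma, though it is of course in force throughout the proof of Theorem~\ref{fieldtheorem}.
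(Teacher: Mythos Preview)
Your proof is correct and shares its core mechanism with the paper's: both realise the scalar $n$ as $\Delta_I^n;(\Delta_I^n)^\sdag$ via Lemma~\ref{daggerdcod} and then invoke nondegeneracy (Lemma~\ref{nondegen}). The difference is in the endgame. The paper argues by contradiction from the existence of \emph{some} nonzero scalar $a$ with $n\cdot a=0$; it first shows $n\neq 0$ and then appeals to the absence of zero divisors, which is supplied by Lemma~\ref{lemmacancmult} and hence uses the simplicity of $I$. You instead go straight for $n\cdot\id_I$: if it vanished, nondegeneracy would force $\Delta=0$ and hence $\id_I=\Delta;p_1=0$, contradicting nontriviality. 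Your route is a genuine streamlining --- it proves exactly the stated conclusion (characteristic~$0$ in the sense of $1+\cdots+1\neq 0$) while dispensing with the simplicity hypothesis entirely, confirming your closing remark that simplicity plays no role in this particular lemma.
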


\jbeginproof
Suppose that scalar addition is not of characteristic~0. Then there exists some nonzero scalar $a: I \to I$, and positive natural number $n$, such that
\begin{equation}
a + \cdots + a = 0
\end{equation}
where the sum contains $n$ copies of $a$. This sum is equal to $n \cdot a$, where $n:I \to I$ is a scalar given by $\Delta ^n_I ; \nabla ^n_I$, for $\Delta _I ^n$ the $n$-fold codiagonal of $I$ and $\nabla _I ^n$ the $n$-fold diagonal. From the \dag\-biproduct property it follows that $\nabla _I ^n = (\Delta _I ^n) ^\sdag$ by Lemma~\ref{daggerdcod}, and from the \dag\-equalizer property it follows in turn that $n = \Delta _I ^n ; ( \Delta _I ^n) ^\sdag \neq 0$ by Lemma~\ref{nondegen}. However, by Lemma~\ref{cancmult}, the product of two nonzero scalars cannot be zero. We conclude that our original assumption was wrong, and that scalar addition is of characteristic~0.
\end{proof}

\subsection*{Involution and ordering}

The action of the \dag\-functor gives the scalars the structure of an \emph{involutive} semiring, equipping it with an involution that respects semiring addition and multiplication: we have $(a+b) ^\sdag = a ^\sdag + b ^\sdag$ by Lemma~\ref{daggerdcod}, and $(ab) ^\sdag = a ^\sdag  b ^\sdag$ by functoriality. An involution is usually required to be order-reversing for multiplication, which is satisfied in a natural way since the \dag\-functor is contravariant, but we can neglect this here as the scalars are commutative.

The self-adjoint scalars  are those scalars satisfying $a = a ^\sdag$. These self-adjoint scalars are closed under multiplication and addition, and so form a subsemiring. It is easy to see that the field $Q(D(S))$ into which the scalars $S$ embed inherits the involution, and so is an involutive field. The self-adjoint elements of $Q(D(S))$ also form a field, and the self-adjoint scalars embed into this field. 

We now demonstrate that the self-adjoint scalars admit an \emph{order}. An order on a semiring is a reflexive total order on the underlying set, such that the following conditions hold:
\begin{align}
a \leq b \, &\Rightarrow \, a+c \leq b+c
\\
0 \leq a, 0 \leq b \, &\Rightarrow \, 0 \leq ab
\end{align}
We will not work directly with these conditions. Instead, we will take advantage of the fact that our scalars embed into a field, and use the following classical theorem on orders for fields (for a proof, see \cite[Theorem 3.3.3]{m02-mtai}.)
\begin{theorem}
\label{fieldordertheorem}
A field admits an order if and only if a finite sum of squares of nonzero elements is never zero.
\end{theorem}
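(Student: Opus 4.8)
The plan is to prove the two implications separately. The forward direction is elementary: if $\leq$ is an order on the field $F$, then every nonzero square is strictly positive, since $a^2 = a\cdot a$ or $a^2 = (-a)\cdot(-a)$ with one of $a,-a$ nonnegative gives $a^2 \geq 0$, while $a^2 \neq 0$ because $F$ has no zero divisors. A sum of finitely many strictly positive elements is again strictly positive (an easy consequence of the order axioms), so any finite sum of squares of nonzero elements is strictly positive, hence nonzero.

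For the converse I would first reformulate the hypothesis. The condition ``no finite sum of squares of nonzero elements of $F$ is zero'' is equivalent to ``$-1$ is not a sum of squares in $F$'': if $-1 = b_1^2 + \cdots + b_k^2$ then $1^2 + b_1^2 + \cdots + b_k^2 = 0$ is a vanishing sum of nonzero squares, and conversely if $a_1^2 + \cdots + a_n^2 = 0$ with all $a_i \neq 0$ then $n \geq 2$ and dividing through by $a_1^2$ exhibits $-1$ as a sum of squares. So it suffices to construct an order under the assumption $-1 \notin \sum F^2$.

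The main argument is then the classical Artin--Schreier construction via Zorn's lemma. Call $P \subseteq F$ a \emph{preordering} if $P+P \subseteq P$, $P\cdot P \subseteq P$, $P$ contains every square of $F$, and $-1 \notin P$; by hypothesis $\sum F^2$ is a preordering, and unions of chains of preorderings are again preorderings, so Zorn's lemma gives a maximal preordering $P$. One then checks $P$ is a positive cone, i.e. $P \cup (-P) = F$ and $P \cap (-P) = \{0\}$. For the first: given $a \notin P$, the set $P' := \{x + ay : x,y \in P\}$ is closed under sums and products, contains all squares and contains $a$, so by maximality $-1 \in P'$; writing $-1 = x + ay$, the case $y = 0$ gives $-1 = x \in P$, which is impossible, so $y \neq 0$ and $-a = (1+x)y^{-1} = (1+x)\cdot y\cdot (y^{-1})^2 \in P$. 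For the second: if $a \in P \cap (-P)$ with $a \neq 0$ then $-1 = (-a)\cdot a \cdot (a^{-1})^2 \in P$, a contradiction. Finally, setting $a \leq b \iff b - a \in P$ yields a total order on $F$ compatible with both operations, which is routine bookkeeping.

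I expect the crux to be the step showing a maximal preordering is total, and specifically the verification that enlarging $P$ to $P'$ cannot create $-1$ unless $-a$ already lay in $P$ --- this is exactly where the presence of multiplicative inverses and the closure of $P$ under squares are both used in an essential way. The forward direction and the final check that $\leq$ is an order are straightforward by comparison.
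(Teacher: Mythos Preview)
Your argument is correct and is precisely the classical Artin--Schreier construction: the forward direction is immediate from the order axioms, and for the converse you reformulate the hypothesis as $-1 \notin \sum F^2$, apply Zorn's lemma to the poset of preorderings, and verify that a maximal preordering is a positive cone. The verification that the enlarged set $P'$ is closed under products (where the $a^2 y_1 y_2$ cross-term lands back in $P$ because $a^2$ is a square) and the extraction of $-a \in P$ from $-1 = x + ay$ are both handled cleanly.

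There is nothing to compare against in the paper itself: the author does not give a proof of this theorem, but simply cites an external reference (\emph{``for a proof, see \cite[Theorem~3.3.3]{m02-mtai}''}). Your write-up supplies exactly the standard argument one would find in such a reference.
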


\jnoindent
We will use this theorem to show that the self-adjoint elements of the field $Q(D(S))$  admit an order. It then follows straightforwardly that the semiring of self-adjoint elements of $S$ admits an order, through its involution-preserving embedding into $Q(D(S))$. However, we emphasize that there is no guarantee that this order will be unique, or that there will be a canonical choice of order.

We actually prove a more general theorem, on sums of \emph{squared norms} of elements of $Q(D(S))$.

\begin{defn}
For a field with involution $a \mapsto a ^\sdag$, the \emph{squared norm} of $a$ is $a a ^\sdag$.
\end{defn}

\begin{lemma}
\label{sumnonzerosquares}
Let $S$ be the semiring of scalars in a nontrivial monoidal \dag\-category with simple tensor unit, and with all finite \dag\-limits. Then a finite sum of squared norms of nonzero elements of the field $Q(D(S))$ is never zero.
\end{lemma}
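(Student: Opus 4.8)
The plan is to unwind the two field constructions in turn: reduce the assertion first to the difference ring $D(S)$, then to the semiring $S$ itself, where it becomes a direct application of the exchange lemma (Lemma~\ref{exchangelemma}). To set this up, first I would note that the \dag\-functor on $S$ induces an involution $(a-b)^\sdag := a^\sdag - b^\sdag$ on $D(S)$ and an involution $(s/t)^\sdag := s^\sdag/t^\sdag$ on $Q(D(S))$; these are well-defined and multiplicative because the involution on $S$ distributes over addition by equation~\eqref{dagadd} and, being induced by a contravariant functor on a commutative monoid, over multiplication. Both $D(S)$ and $Q(D(S))$ are integral domains, since $D(S)$ has cancellable multiplication (established in the run-up to Lemma~\ref{ABablemma}) and $Q(D(S))$ is a field.

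\emph{Reduction to $D(S)$.} Given nonzero elements $z_1,\dots,z_n$ of $Q(D(S))$, write $z_i = s_i/t_i$ with $s_i,t_i \in D(S)$ and $s_i,t_i \neq 0$, and put $t := t_1 \cdots t_n$ and $\hat t_i := \prod_{j \neq i} t_j$. A short calculation with fractions gives $t\,t^\sdag \cdot \sum_i z_i z_i^\sdag = \sum_i (s_i \hat t_i)(s_i \hat t_i)^\sdag$, which lies in $D(S) \subseteq Q(D(S))$. Since $t\,t^\sdag \neq 0$ in the field $Q(D(S))$ and the inclusion $D(S) \hookrightarrow Q(D(S))$ is faithful, it suffices to show that $\sum_i w_i w_i^\sdag \neq 0$ in $D(S)$ whenever $w_1,\dots,w_n$ are nonzero; here $w_i := s_i \hat t_i$, which is nonzero because $D(S)$ is a domain.

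\emph{Reduction to $S$ and the exchange lemma.} Representing each $w_i$ as $a_i - b_i$ with $a_i,b_i \in S$, the hypothesis $w_i \neq 0$ says $a_i \neq b_i$, and expanding squared norms in $D(S)$ gives
\[
\sum_i w_i w_i^\sdag \;=\; \Bigl(\textstyle\sum_i (a_i a_i^\sdag + b_i b_i^\sdag)\Bigr) - \Bigl(\textstyle\sum_i (a_i b_i^\sdag + a_i^\sdag b_i)\Bigr).
\]
As $S$ embeds faithfully in $D(S)$, this vanishes iff $\sum_i (a_i a_i^\sdag + b_i b_i^\sdag) = \sum_i (a_i b_i^\sdag + a_i^\sdag b_i)$ holds in $S$. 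I would then apply Lemma~\ref{exchangelemma} to the parallel pair given by the row vectors $f := (a_1\ \cdots\ a_n)$ and $g := (b_1\ \cdots\ b_n)$, viewed as morphisms $I^{\oplus n} \to I$ (these biproducts exist since the category has all finite \dag\-limits). Using the matrix calculus one checks that $f^\sdag; f = \sum_i a_i a_i^\sdag$, that $g^\sdag; g = \sum_i b_i b_i^\sdag$, and that $f^\sdag; g + g^\sdag; f = \sum_i (a_i b_i^\sdag + a_i^\sdag b_i)$ (commutativity of the scalars is used here). Thus the displayed identity is exactly the hypothesis of the exchange lemma, so $f = g$, i.e.\ $a_i = b_i$ for every $i$ --- contradicting $a_i \neq b_i$ and proving the lemma (for $n \geq 1$; the empty sum is vacuous).

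The argument is short once the right morphisms are chosen, so I do not expect a serious obstacle; the care required is purely bookkeeping. The two points to watch are that ``nonzero'' genuinely survives the clearing of denominators --- this is exactly where cancellable multiplication of $D(S)$, hence the earlier \dag\-limit machinery, is used --- and that the involutions on $D(S)$ and $Q(D(S))$ really are the expected ones. The one conceptual move is to apply the exchange lemma not to scalars but to the vectors $I^{\oplus n} \to I$, which packages the required multi-variable ``nondegeneracy'' statement into the single-variable lemma already proved.
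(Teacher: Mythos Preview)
Your proposal is correct and is essentially the paper's own proof: clear denominators to reduce from $Q(D(S))$ to $D(S)$, expand each $w_i = a_i - b_i$ to rewrite the vanishing condition as an equality in $S$, then package the $a_i$ and $b_i$ into parallel morphisms between $I$ and $I^{\oplus n}$ so that the exchange lemma~\ref{exchangelemma} forces $a_i = b_i$ for all $i$. The only cosmetic differences are that the paper uses column vectors $d,e:I \to I^{\oplus N}$ (so the exchange lemma is applied to their adjoints) and argues the contrapositive directly rather than tracking nonzero-ness through the denominator-clearing step.
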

\jbeginproof
We must show that, given any finite sum satisfying
\begin{equation}
\label{bigsum}
a_1 a_1 {}^\sdag + a_2 a_2 {}^\sdag + \cdots + a_N a_N {}^\sdag = 0
\end{equation}
where each $a_i$ is an element of $Q(D(S))$, each $a_i$ is actually zero. By construction, each $a_i$ is a formal quotient $b _i / c_i$ of some pair of elements $b_i, c_i$ in $D(S)$. Writing the sum in terms of these quotients, and multiplying through by each denominator, we obtain another sum in the form of (\ref{bigsum}) in which each term is a squared norm of an element of $Q(D(S))$ with trivial denominator; in other words, an element of $D(S)$. Writing these elements as formal ordered pairs $d_i - e_i$, where $d_i, e_i$ are elements of $S$, we obtain the sum
\begin{equation}
\label{biggersum}
(d_1 - e_1) (d_1 - e_1) ^\sdag + (d_2 - e_2)(d_2-e_2)^\sdag + \cdots + (d_N-e_N) (d_N- e_N) ^\sdag = 0.
\end{equation}

\jnoindent
We define the morphism $d: I \to I ^{\oplus N}$ to be the column vector with components $(d_1, d_2, \ldots, d_N)$, and the morphism $e:I \to I ^{\oplus N}$ to be the column vector with components $(e_1, e_2, \ldots, e_N)$. By matrix multiplication, we see that equation (\ref{biggersum}) is precisely equivalent to the equation
\begin{equation}
d; d^\sdag + e; e^\sdag = d; e^\sdag + e; d ^\sdag.
\end{equation}
We can now apply the exchange lemma~\ref{exchangelemma} to conclude that $d = e$, and so $e_i = d_i$ for all~$i$. It follows that each of the original $a_i = \frac{d_i - e_i}{\scriptstyle \textrm{denom}}$ was zero, and that the sum of squared norms was in fact a sum of zeros, which proves the lemma.
\end{proof}

\jnoindent
From this lemma we see that a finite sum of squares of nonzero self-adjoint elements of the field $Q(D(S))$ is nonzero. So by Theorem~\ref{fieldordertheorem} the self-adjoint elements of $Q(D(S))$ admit an ordering, and in general they will admit many different orderings. By extension, the self-adjoint elements of the scalar semiring $S$ also admit an ordering, since they embed into the self-adjoint elements of $Q(D(S))$. This concludes the proof of the main theorem.

\section{Completing the scalars}

We have shown that, in a monoidal \dag\-category with all finite \dag\-limits that satisfies the conditions of the previous section, the scalars share many properties with the complex numbers. In particular, the self-adjoint scalars will admit an order, just as the real numbers do. The order on the real numbers is a special one: in particular, it is \mbox{\emph{Dedekind-complete}}, which for a total order means that every subset with an upper bound has a least upper bound, and every subset with a lower bound has an greatest lower bound.

The real numbers are also a field, and the field structure interacts well with the Dedekind-completeness property of the underlying total order: if $X$ is a set of elements of $\mathbb{R}$ with upper bound $\bigvee (X)$, then we have $\bigvee(X+r)=\bigvee(X) + r$, where $r$ is a real number and $X+r$ denotes the set $\{x+r|x \in X\}$, and similarly $\bigwedge(X+r) = \bigwedge (X) + r$. If a totally-ordered semiring has a Dedekind-complete underlying totally-ordered set, and has an addition operation satisfying these extra compatibility conditions, then we call it a \emph{Dedekind-complete semiring}.

In this section, we will show that this Dedekind-completeness property is the extra abstract property required to characterize the complex numbers. To work towards this, we first prove a useful lemma.

\begin{lemma}
\label{semiringproperties}
Suppose a commutative semiring contains the positive rational numbers and is additively cancellable, multiplicatively cancellable, totally-ordered and Dedekind-complete. Then it has the following properties:
\jbeginenumerate

\item \emph{(Means.)}
\label{meanprop}
For any pair of elements $a<b$ we can construct their `mean' as $\frac 1 2 (a+b)$, which satisfies $a< \frac 1 2 (a+b) < b$.
\item \emph{(Partial subtraction.)} For any pair of positive elements $a$ and $b$ with $a<b$, there exists an element $c$ with $c+a=b$.
\label{parsub}

\item \emph{(No positive infinitesimals.)} For any positive element $a$, there exists a natural number $n$ such that $an > 1$.
\label{postiny}

\item \emph{(No positive infinite elements.)} For any positive element $a$, there exists a natural number $n$ such that $a<n$.
\label{posinf}

\item \emph{(Dense positive rationals.)} For any two unequal positive elements, there is a rational number between them.
\label{denseposrat}

\item \emph{(Real numbers.)} The semiring is isomorphic to either the semiring $\mathbb{R} ^{\geq0}$ of nonnegative real numbers, or the field $\mathbb{R}$ of all real numbers.
\label{realnum}
\end{enumerate}
\end{lemma}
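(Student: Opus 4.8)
The six parts form a short dependency chain: (1), (3) and (4) use only the ordered-semiring axioms together with additive cancellability and Dedekind-completeness; (2) uses (3); (5) uses (2), (3), (4); and (6) rests on everything. I would dispatch (1) first: $\tfrac12\in\mathbb{Q}^{>0}\subseteq R$, and since $\mathbb{Q}^{>0}$ sits in $R$ as a subsemiring we have $\tfrac12(a+b)+\tfrac12(a+b)=a+b$; moreover $a<b$ gives $a+a<a+b<b+b$ (strictness by cancellability), so $\tfrac12(a+b)\le a$ would give $a+b\le a+a$ on adding the inequality to itself, and symmetrically for the upper bound. For (4): if a positive $a$ satisfied $n\le a$ for all $n\in\mathbb{N}\subseteq R$, then $s:=\bigvee\{\,n:n\ge1\,\}$ exists and the translation law gives $s+1=\bigvee\{\,m:m\ge2\,\}=s$, so $1=0$, contradicting $1\in\mathbb{Q}^{>0}$. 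For (3): if $na\le1$ for all $n$, then $s:=\bigvee\{\,na:n\ge1\,\}$ exists and $s+a=\bigvee\{\,na:n\ge2\,\}=s$, forcing $a=0$. A corollary used below is $\bigwedge\mathbb{Q}^{>0}=0$ in $R$: a positive lower bound $\delta$ would by (3) be undercut by some $1/n\in\mathbb{Q}^{>0}$.

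Part (2) is the delicate step, because the obvious argument looks circular. Given positive $a<b$, set $X=\{x\in R:a+x\le b\}$; it contains $0$ and (as $a>0$) is bounded above by $b$, so $c:=\bigvee X$ exists, and the translation law gives $a+c=\bigvee(X+a)\le b$. If $a+c+\varepsilon<b$ for some rational $\varepsilon>0$, then $c+\varepsilon/2\in X$ while $c+\varepsilon/2>c$, a contradiction; hence $b\le a+c+\varepsilon$ for every $\varepsilon\in\mathbb{Q}^{>0}$, and taking the infimum over $\varepsilon$, using the translation law and the corollary $\bigwedge\mathbb{Q}^{>0}=0$, yields $b\le a+c$, so $a+c=b$. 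Part (5) is then a routine counting argument: given $a<b$ positive, write $b=a+c$ with $c>0$ by (2), choose $n$ with $1/n<c$ by (3) and $M\in\mathbb{N}$ with $a<M$ by (4), and take the largest $k$ with $k/n\le a$ (the set of such $k$ is finite and non-empty); then $(k+1)/n$ is a rational with $a<(k+1)/n\le a+1/n<b$.

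For (6) the plan is to embed $R$ into $\mathbb{R}$ and then let Dedekind-completeness finish the job. Using (2), the difference ring $D(R)$ (into which $R$ embeds by additive cancellability, as in Section~\ref{mainsection}) has cancellable multiplication: shifting by a large rational lets one write any element of $D(R)$ as $\pm p'$ with $p'\in R$, and a zero-divisor relation then reduces to cancellation in $R$. Hence the quotient field $F:=Q(D(R))$ exists, every element of $F$ has the form $\pm p'/q'$ with $p',q'\in R$ and $q'\ne0$, and (3)--(4) then show $F$ is archimedean. So $F$ admits the standard order-embedding into $\mathbb{R}$ fixing $\mathbb{Q}$, and restricting gives an order- and semiring-embedding $\iota:R\hookrightarrow\mathbb{R}$ that is the identity on $\mathbb{Q}^{>0}$. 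Write $I:=\iota(R)$: it is a subsemiring of $\mathbb{R}$ containing $\mathbb{Q}^{>0}$, hence dense in $[0,\infty)$, and dense in all of $\mathbb{R}$ if it meets $(-\infty,0)$ (being closed under adding a fixed negative element); and $I$ is Dedekind-complete as a poset since $R$ is. If some $x\in\mathbb{R}$ lay outside $I$, then $\{r\in I:r<x\}$ is non-empty (density) and bounded above in $I$ (by a large integer), so it has a least upper bound $c\in I$; density forces $\sup_{\mathbb{R}}\{r\in I:r<x\}=x$, hence $c>x$, but then any element of $I$ strictly between $x$ and $c$ is a smaller upper bound in $I$ --- a contradiction. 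Therefore $I=\mathbb{R}$, unless $I\subseteq[0,\infty)$, in which case (using $0\in I$ and closure under addition, so any lower bound of $I$ is $0$) $I=[0,\infty)=\mathbb{R}^{\ge0}$; this gives the dichotomy.

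The main obstacles I anticipate are keeping the proof of (2) non-circular --- the point being that one should not try to ``subtract $a$'' internally but instead extract the subtrahend as a supremum and close the gap using $\bigwedge\mathbb{Q}^{>0}=0$ --- and, in (6), realizing that one need not establish Dedekind-completeness of the field $F$ itself: the completeness of $R$, fed back in after the embedding into $\mathbb{R}$, together with the density of $\mathbb{Q}^{>0}$, already pins the image down to $\mathbb{R}^{\ge0}$ or $\mathbb{R}$.
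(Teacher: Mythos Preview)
Your argument is correct, but it diverges from the paper's in several places. The most visible difference is the dependency order: the paper proves (2) before (3) and (4), using the mean construction to locate the subtrahend between an upper set $J$ and a lower set $K$; you instead prove (3) and (4) first and then use the corollary $\bigwedge\mathbb{Q}^{>0}=0$ to close the gap in (2). Your proofs of (3) and (4) are also different in style --- you apply the translation law directly to obtain equations of the form $s+a=s$ or $s+1=s$ and cancel, whereas the paper argues more indirectly about whether the supremum of the infinitesimals is itself infinitesimal (and dually for infinite elements). Your route here is arguably cleaner, exploiting the addition-compatibility of suprema that is built into the paper's definition of ``Dedekind-complete semiring''.

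The largest divergence is in (6). The paper stays inside $R$: it shows each positive $a$ equals $\bigvee(\mathbb{Q}^+_{<a})$, giving a bijection of the nonnegative part of $R$ with $\mathbb{R}^{\ge0}$, and then, if a negative element $b$ exists, produces $-1$ explicitly via the identity $(b/c+1)^2=2(b/c+1)$ where $c$ is the positive square root of $b^2$. You instead pass to $D(R)$ and $F=Q(D(R))$, order $F$, show it is archimedean (this is where your ``$\pm p'$'' reduction, via a shifted version of (2), is doing real work), invoke the standard embedding of an archimedean ordered field into $\mathbb{R}$, and then use density plus Dedekind-completeness of the image to force $I=\mathbb{R}^{\ge0}$ or $I=\mathbb{R}$. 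Both arguments are valid; the paper's is more self-contained and avoids building the auxiliary ring and field, while yours is more structural and reuses the $D(-)$, $Q(-)$ machinery already set up in Section~\ref{mainsection}. Your closing remark --- that one never needs completeness of $F$, only of $R$, fed back after the embedding --- is exactly the point that makes your route go through.
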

\jbeginproof
We prove these properties sequentially, at times using lower-numbered properties to aid the proof of higher-numbered ones. Throughout, let $L$ be a commutative semiring satisfying the hypotheses of the lemma, and let $a,b \in L$ be variables valued in the semiring.

\emph{\ref{meanprop}.} (Means.) Since $a<b$ we have $a+a=2a<a+b$, and multiplying by the fraction~$\frac 1 2$, we obtain $a < \frac 1 2 (a+b)$. Similarly, we can also show that $\frac 1 2 (a+b) < b$.

\textit{\ref{parsub}.} (Partial subtraction.) For any pair of elements $a,b$ satisfying $0 < a<b$, consider the following sets:
\begin{align}
J &= \{x \in L,  x+a>b \}
\\
K &= \{ x \in L, x+a<b \}
\end{align}
The set $J$ has a lower bound $0$ and the set $K$ has an upper bound $b$, so the greatest lower bound $\bigwedge(J)$ and greatest upper bound ${\bigvee}(K)$ both exist by Dedekind-completeness. If $\bigvee(J) + a = b$ or $\bigwedge(K)+a=b$ then we have discovered $c$ and we are done, so suppose that neither hold. Suppose that $\bigwedge(J) + a < b$: then $\bigwedge (J+a) < b$ by the preservation of infima by addition, but this is not possible, since $b$ would then serve as a greater lower bound. Similarly, we can rule out $\bigvee(K) +a>b$. The only remaining situation is that in which $\bigvee(K) +a < b < \bigwedge(J)+a$, from which it follows by additive cancellability that $\bigvee(K) < \bigwedge(J)$. Construct the mean of $\bigvee(K)$ and $\bigwedge(J)$ as $m:= \frac 1 2 ({\bigvee}(K) + \bigwedge(J))$; then by property~\ref{meanprop},
\begin{equation}
\label{kmjineq}
\textstyle
\bigvee (K) < m < \bigwedge (J).
\end{equation}
Consider the value of $m+a$. Suppose that $m+a < b$; then $m \in K$ and       so $m \leq \bigvee(K)$, but this contradicts equation~\eqref{kmjineq}. Similarly, suppose that $m+a>b$; then $m \in J$ and         so $m \geq \bigwedge(K)$, and this again leads to a contradiction. The only remaining possibility is that $m+a=b$, and so we are done.

\textit{\ref{postiny}.} (No infinitesimals.) Consider the set
\begin{equation}
I = \{x \in L, x>0, \forall n \in \mathbb{N} \, \,nx <1 \},
\end{equation}
the elements of which we call the infinitesimals. Suppose the set $I$ is not empty; since the element $1$ serves as an upper bound, the supremum $\bigvee(I)$ must therefore exist, and will satisfy ${\bigvee}(I)>0$ since it is certainly greater than each positive infinitesimal. Suppose $\bigvee(I)$ is not itself an infinitesimal; then there exists some $m \in \mathbb{N}$ with $m \bigvee(I) > 1$, and multiplying by the rational number $\frac 1 m$ it follows that $\bigvee(I) > \frac 1 m$. But then $\frac 1 m $ serves as a lower upper bound to the infinitesimals than $\bigvee(I)$; this gives a contradiction, and so $\bigvee(I)$ must be an infinitesimal. Since $\bigvee(I)>0$ it follows that $2 \bigvee(I) > \bigvee(I)$; the quantity $2 \bigvee(I)$ is therefore not an infinitesimal, and there must exist some $p \in \mathbb{N}$ with $2 p \bigvee(I) > 1$. But since $2p$ is a natural number, $\bigvee(I)$ is not infinitesimal, and so we have a contradiction. It follows that the set $I$\ is empty.

\emph{\ref{posinf}.} (No positive infinite elements.) This property is proved in a similar way to property~\ref{postiny}. Define the set
\begin{equation}
H = \{ x \in L, \forall n \in \mathbb{N} \,\, x>n \},
\end{equation}
containing the infinite elements, and assume that it is not empty. Clearly this set has a positive lower bound given by any natural number, so by Dedekind-completeness it must have a positive greatest upper bound $\bigwedge(H)$. Since $\frac 1 2 {\bigwedge}(H) < \bigwedge(H)$ it follows that $\frac 1 2 {\bigwedge} (H)$ is not an infinite element, and so there exists some $n \in \mathbb{N}$ with $\frac 1 2 \bigwedge(H) < n$; from this we see that ${\bigwedge}(H) < 2n$, and so $\bigwedge(H)$ itself is not an infinite element. But then $2n$ is a greater lower bound for the elements of $H$, which contradicts the definition of $\bigwedge(H)$. The only remaining possibility is that the set\ $H$ is empty.

\emph{\ref{denseposrat}.} (Dense positive rationals.) Let $a,b \in L$ be two unequal positive elements without a rational number between them. Without loss of generality, assume $a<b$. By property~\ref{parsub} there exists a positive element $c \in L$ with $a+c=b$, and by property~\ref{postiny} there exists some natural number $n \in L$ with $nc>1$. It follows that $nb = na+nc>na+1$. Write $p \in L$ for the smallest natural number greater than $na$, which exists by property~\ref{posinf}; it satisfies $na+1>p>na$. Then $nb>na+1>p>na$. Multiplying by the rational $\frac 1 n$ we obtain $b > \frac p n > a$, and we have proved the property.

\emph{\ref{realnum}.} (Real numbers.) For any positive element $a$, define the set $\mathbb{Q} ^+ _{<a}$ to consist of the positive rational numbers strictly less than $a$. From property~\ref{postiny}  there are no infinitesimals and $\mathbb{Q} ^+ _{<a}$ is not empty; also, since it has an upper bound $a$ it has a least upper bound $\bigvee(\mathbb{Q} ^+ _{<a})$. Suppose $\bigvee(\mathbb{Q} ^+ _{<a}) < a$; then by property~\ref{denseposrat} there exists some rational element $r$ satisfying $\bigvee( \mathbb{Q} ^+ _{<a})<r<a$. But this contradicts the definition of $\bigvee(\mathbb{Q} ^+ _{<a})$, and we conclude that $\bigvee(\mathbb{Q} ^+ _{<a}) = a$. We immediately obtain an isomorphism between the nonnegative elements of $L$ and the positive real numbers $\mathbb{R} ^{\geq 0}$, since any positive real number is the supremum of the positive rationals below it.

Suppose that the nonnegative elements do not comprise the entire semiring; then there exists some $b \in L$ with $b<0$. Then $b^2>0$, and identifying $b^2$ with a real number, we can find a positive element $c \in L$ with $c ^2 = b ^2$, and a positive element $\frac 1 c \in L$ which is the reciprocal of $c$. Then defining $x = \frac b c + 1$, we see that
\begin{equation}
\textstyle
x^2 = \big( \frac b c + 1 \big) ^2 = \big( \frac {b^2} {c^2} + 1 + 2 \frac b c \big) = 2+2 \frac b c = 2\big(1+ \frac b c \big) = 2x.
\end{equation}
Suppose that $x \neq 0$; from the multiplicative cancellability property this implies that $x= \frac b c + 1 = 2$, and therefore that $b = c$. But this is not possible, since $b<0$ and $c>0$. We conclude that $x=0$, and therefore that $\frac b c + 1 = 0$ and $\frac b c = -1$. It follows that the semiring is in fact a ring, and that the negative elements are in bijection with the positive elements under multiplication by $-1$. We therefore obtain an isomorphism between the entire ring and the real numbers $\mathbb{R}$ by the method described in the previous paragraph, and it is clear that our semiring is not only a ring, but a field.
\end{proof}

We now combine this lemma with Theorem~\ref{fieldtheorem} to prove out main result, which demonstrates the existence of complex numbers in a category based only its completeness properties. Note that the  statement of this theorem only makes sense in the light of Theorem~\ref{fieldtheorem}, which guarantees that the self-adjoint scalars will admit a total order compatible with the semiring structure.
\begin{theorem}
\label{complextheorem}
In a monoidal \dag\-category  with simple tensor unit, which has all finite \dag\-limits, and for which the self-adjoint scalars are Dedekind-complete, the scalars have an involution-preserving embedding into the complex numbers.
\end{theorem}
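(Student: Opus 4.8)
The plan is to reduce everything to Lemma~\ref{semiringproperties} applied to the semiring $S_{\mathrm{sa}}$ of self-adjoint scalars, and then to bootstrap from $S_{\mathrm{sa}}\cong\mathbb{R}^{\geq 0}$ (or $\mathbb{R}$) up to the full field of scalars. First, by Theorem~\ref{fieldtheorem} the semiring $S$ of scalars has an involution-preserving embedding into the involutive field $F:=Q(D(S))$, which has characteristic~$0$; and by the hypothesis of the present theorem, $S_{\mathrm{sa}}\subseteq S$ carries a Dedekind-complete total order compatible with its semiring structure. I would then verify that $S_{\mathrm{sa}}$ meets every hypothesis of Lemma~\ref{semiringproperties}: it is commutative by Lemma~\ref{eh}; it has characteristic~$0$ by Lemma~\ref{nzcharsub}, and combined with Lemma~\ref{fraclemma} this supplies a copy of $\mathbb{Q}^{\geq 0}$ inside it, noting that each $\tfrac{1}{n}$ is self-adjoint because $n$ is self-adjoint and $\tfrac{1}{n}$ is uniquely characterised by $n\cdot\tfrac{1}{n}=1$ (apply $\dag$ and use $(n\cdot g)^\dag=n\cdot g^\dag$); it is additively cancellable by Lemma~\ref{additivelemma} and multiplicatively cancellable by Lemma~\ref{lemmacancmult}, both restricting to $S_{\mathrm{sa}}$; and it is totally ordered and Dedekind-complete by assumption. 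Lemma~\ref{semiringproperties}, property~\ref{realnum}, then gives that $S_{\mathrm{sa}}$ is isomorphic as a semiring to $\mathbb{R}^{\geq 0}$ or to $\mathbb{R}$, so in either case its difference ring is $D(S_{\mathrm{sa}})\cong\mathbb{R}$.

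Next I would identify the self-adjoint part of $F$. Since the difference-ring and quotient-field constructions are functorial, $D(S_{\mathrm{sa}})\subseteq D(S)$ and $Q(D(S_{\mathrm{sa}}))\subseteq F$. For the reverse inclusions: if $a-b\in D(S)$ is self-adjoint then $2(a-b)=(a+a^\dag)-(b+b^\dag)\in D(S_{\mathrm{sa}})$, and dividing by the invertible element $2$ shows that the self-adjoint part $D(S)_{\mathrm{sa}}$ equals $D(S_{\mathrm{sa}})$; and if $\tfrac{s}{t}\in F$ is self-adjoint then $\tfrac{s}{t}=\tfrac{s\,t^\dag}{t\,t^\dag}$ with both $s\,t^\dag$ and $t\,t^\dag$ self-adjoint, using $s^\dag t=s\,t^\dag$ in the integral domain $D(S)$. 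Hence $F_{\mathrm{sa}}=Q(D(S)_{\mathrm{sa}})=Q(D(S_{\mathrm{sa}}))\cong\mathbb{R}$.

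Finally I would conclude. If $F=F_{\mathrm{sa}}$, then $S$ embeds into $F\cong\mathbb{R}\subseteq\mathbb{C}$, and this is trivially involution-preserving since both involutions are the identity on the image. Otherwise the involution is an automorphism of $F$ of order exactly~$2$ whose fixed field is $F_{\mathrm{sa}}\cong\mathbb{R}$; by Artin's theorem $F/F_{\mathrm{sa}}$ is Galois with group $\mathbb{Z}/2\mathbb{Z}$, so $[F:F_{\mathrm{sa}}]=2$, and the only quadratic extension of $\mathbb{R}$ is $\mathbb{C}$, with the involution necessarily the nontrivial automorphism, i.e.\ complex conjugation. Composing $S\hookrightarrow F$ with this isomorphism $F\cong\mathbb{C}$ yields the desired involution-preserving embedding of $S$ into $\mathbb{C}$.

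The main obstacle I anticipate is the middle step: one must be careful that the self-adjoint elements of $Q(D(S))$ are \emph{exactly} the quotients of self-adjoint elements of $D(S)$ rather than merely containing them, and that the Dedekind-complete order posited on $S_{\mathrm{sa}}$ is compatible enough with the field structure of $Q(D(S_{\mathrm{sa}}))$ for Lemma~\ref{semiringproperties} to produce a genuine \emph{ring} isomorphism with $\mathbb{R}$. Everything downstream is then a routine Artin--Schreier-type argument.
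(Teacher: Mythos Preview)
Your proof is correct and tracks the paper's argument closely through the first two stages: verifying the hypotheses of Lemma~\ref{semiringproperties} for $S_{\mathrm{sa}}$, and identifying $F_{\mathrm{sa}}=Q(D(S_{\mathrm{sa}}))\cong\mathbb{R}$. Your treatment of the ``middle step'' is essentially the same as the paper's, with one cosmetic difference: for $D(S)_{\mathrm{sa}}=D(S_{\mathrm{sa}})$ you write $2(a-b)=(a+a^\dag)-(b+b^\dag)$ and divide by the (genuinely invertible, thanks to Lemma~\ref{fraclemma}) element~$2$, whereas the paper observes directly that $a-b=(a+a^\dag)-(a^\dag+b)$ using the self-adjointness relation $a+b^\dag=a^\dag+b$. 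Both are fine.

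The real divergence is in the final step. You invoke Artin's theorem to conclude that $[F:F_{\mathrm{sa}}]=2$ and hence $F\cong\mathbb{C}$ with the involution being complex conjugation. The paper instead builds the isomorphism by hand: it picks $x$ with $x\neq x^\dag$, sets $y=x-x^\dag$, uses Lemma~\ref{sumnonzerosquares} to show $y^\dag y>0$ in $\mathbb{R}$, and then normalises to produce an explicit $j\in F$ with $j^2=-1$ and $j^\dag=-j$, from which the decomposition $z=\mathrm{Re}(z)+j\,\mathrm{Im}(z)$ gives the isomorphism directly. Your route is shorter and cleaner, at the cost of importing a piece of Galois theory; the paper's route is fully self-contained and makes visible exactly where the positivity result Lemma~\ref{sumnonzerosquares} enters this theorem (in your argument it is used only indirectly, via Theorem~\ref{fieldtheorem}). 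The paper also proves a little more than the theorem statement asks --- namely that when the involution is nontrivial the embedding $S\hookrightarrow\mathbb{C}$ is surjective --- which you omit, but that is not required.
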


\jbeginproof
Writing $S$ for the semiring of scalars, we write $L \subseteq S$ for the subsemiring of self-adjoint scalars. This semiring is commutative by Lemma~\ref{eh}, contains the positive rational numbers by Lemma~\ref{fraclemma}, is additively cancellable by Lemma~\ref{additivelemma}, is multiplicatively cancellable by Lemma~\ref{lemmacancmult}, admits a total ordering by Theorem~\ref{fieldtheorem}, and in fact admits an addition-compatible Dedekind-complete ordering by hypothesis. Lemma~\ref{semiringproperties} therefore applies and $L$ is either $\mathbb{R} ^{\geq 0}$ or $\mathbb{R}$, the latter being the smallest field into which $L$ embeds. It follows that $Q(D(L))=D(L) = \mathbb{R}$, where $Q(-)$ and $D(-)$ construct the smallest field containing a particular ring and and smallest ring containing a particular semiring respectively, in the manner described in section~\ref{mainsection}.

By Theorem~\ref{fieldtheorem} we know that $S$ has an involution-preserving embedding into the involutive field $Q(D(S))$, and it follows immediately that the subsemiring $L$ has an embedding into $F \subseteq Q(D(S))$, the subfield consisting of the self-adjoint elements. In fact, this embedding is surjective, as we now show. Consider some element $r=a-b \in D(S)$ where $a,b \in S$; if $r$ is self-adjoint, then this implies $a + b ^\sdag = b + a ^\sdag$. But since $r = (a+a^\sdag) - (a ^\sdag + b)$ we see that $r$\ can be expressed as the difference of elements of $L$, and so the self-adjoint subring of $D(S)$ is precisely $D(L)$. Now consider an element $s \in F \subseteq Q(D(S))$, so $s = c/d$ as a formal ratio of elements $c,d \in D(S)$. If $s$ is self-adjoint then $c ^\sdag / d ^\sdag = c/d$ as formal ratios, which means that $c ^\sdag d = c d ^\sdag$ in $D(S)$. But then we can write $c/d = c d ^\sdag / d d ^\sdag$, demonstrating that $c/d$ is in fact a ratio of self-adjoint elements of $D(S)$, which are precisely elements of $D(L)$. We therefore see that, as subsets, $Q(D(L)) = F \subseteq Q(D(S))$. In particular, since $Q(D(L)) = \mathbb{R}$ we have $F = \mathbb{R}$, and we will use this identification freely in the rest of the proof.

\newcommand\re{\ensuremath{\mathrm{Re}}}
\newcommand\im{\ensuremath{\mathrm{Im}}}
We will demonstrate an involution-preserving embedding of $Q(D(S))$ into the complex numbers. Since $L$ is either $\mathbb{R} ^{\geq 0}$ or $\mathbb{R}$, then $Q(D(L)) = D(L) = \mathbb{R}$. Suppose that the involution on the scalars is trivial; then $L=S$, and $Q(D(S)) = Q(D(L)) = \mathbb{R} \subset \mathbb{C}$, so the theorem holds. Otherwise, let $x \in Q(D(S))$ be an element of our field such that $x ^\sdag \neq x$; then $y:=x-x^\sdag$ is a nonzero element satisfying $y ^\sdag = -y$, and $y^\sdag y \in F$ is a nonzero real number. Suppose that $y ^\sdag y< 0$; then $-y ^\sdag y$ is a positive real number with a positive root $r \in F$ satisfying $r ^\sdag r + y ^\sdag y = 0$. But by Lemma~\ref{sumnonzerosquares} this cannot be the case, and we conclude that $y ^\sdag y > 0$. Let $s \in F$ be the positive root of $y^\sdag y$ satisfying $s^2 = y ^\sdag y$, and define $j = y/s$. Then $j ^\sdag = y ^\sdag / s ^\sdag = -y/s=-j$ and $j ^2 = y^2/s^2 = -y^\sdag y/s^2 = -1$, and $j$ satisfies the properties that we expect of $i \in \mathbb{C}$.
With this in mind, for all elements $z \in Q(D(S))$ we define  $\re(z)$, $\im(z) \in F$ by
\begin{align}
\re(z) &= \textstyle \frac 1 2 (z + z ^\sdag),
\\
\im(z) &= \textstyle \frac 1 {2j} (z - z ^\sdag).
\end{align}
These are the unique elements of $F$ such that \mbox{$z = \re(z) + j \hspace{0.25pt} \im(z)$}. From this decomposition we obtain an obvious field homomorphism $\sigma:Q(D(S)) \to \mathbb{C}$ given by $\sigma(z) = \re(z) + i \hspace{0.25pt} \im(z)$, where $i \in \mathbb{C}$ is a square root of $-1$, and where we are using the identification of $F$\ with the real numbers. This homomorphism is clearly injective, and it is surjective since any element $k \in \mathbb{C}$ is equal to $\sigma(\Re(k) + j \Im(k))$, so it is a field isomorphism. Since the semiring $S$ has an involution-preserving embedding into $Q(D(S))$, it therefore also has an involution-preserving embedding into $\mathbb{C}$, with involution given by complex conjugation.

Finally we will show that if the involution on the scalars is nontrivial, then the scalar semiring is actually isomorphic to $\mathbb{C}$, with involution given by complex conjugation. We have demonstrated the existence of an involution-preserving embedding of $S$ into $\mathbb{C}$, and in what follows we will use this embedding freely. Since we know that $S$ at least contains $\mathbb{R} ^{\geq 0}$, we only need to show that it also contains $i$, since it will then contain the entire complex plane. Suppose some nonzero element $a \in S$ has $\Re(a)=0$; then $a = i r$ for some $r \in \mathbb{R} \subset \mathbb{C}$. Write $r ^+ \in \mathbb{R} ^{\geq 0}$ for the positive root of $r ^2$; then since $\mathbb{R} ^{\geq 0} \subset S$, we have $1/r ^+ \in \mathbb{R} ^{\geq 0} \subset S$. It follows that $a(1/r ^+) = \pm i \in S$, and so either this quantity or its adjoint is $i \in S$. So, if we can show the existence of a nonzero element of $S$\ with zero real part, our result will follow. We know that there exists some $b \in S$ with $b \neq b ^\sdag$. Suppose $\Re(b) =0$; then we are done. Suppose instead that $\Re(b) < 0$; then defining $c := - \Re(b) \in \mathbb{R} ^{\geq 0} \subset S$ we see that $\Re(b+c) =0$, so we are done. Finally, suppose that $\Re(b) > 0$; then from a simple consideration of the geometry of the complex plane, it is straightforward to see that there exists some natural number $n$ with $b ^n \in S$ such that $\Re(b ^n) < 0$, but we just demonstrated that the existence of such an element implies $i \in S$. We conclude that whatever the value of $\Re(b)$ we have $i \in S$, and so the involutive semiring $S$ can be identified with the field $\mathbb{C}$, with involution given by complex conjugation.
\end{proof}

\jnoindent
In particular, the scalars can be identified with either $\mathbb{R} ^{\geq 0}$ or $\mathbb{R}$ with trivial involution,  or $\mathbb{C}$ with complex conjugation as involution.

\section{Categorical description of inner products}

In this section, we will see how \dag\-limits can be used to define the \dag\-functor on the category \cat{FdHilb} of finite-dimensional Hilbert spaces. Since knowing the \dag\-functor on this category is equivalent to knowing the inner products on all the objects, we also obtain a new way to describe inner products.

We begin with a useful technical lemma. If \cat C and \cat D are \dag\-categories and $F: \cat C \to \cat D$ is a functor, then we say that $F$ \emph{commutes with the \dag\-functors} if $\dag \circ F = F \circ \dag$, where the first $\dag$ is on the category \cat D and the second is on the category \cat C. Also, we recall the definition of a \emph{unitarily essentially surjective functor} as a functor with every object in the codomain unitarily isomorphic to some object in the functor's image, and \textit{unitary \dag\-equivalence} as an equivalence between two \dag\-categories which commutes with the \dag\-functors, and for which the natural isomorphisms are unitary at every stage. 

\begin{lemma}
\label{ude}
Suppose that that there is a functor between two \dag\-categories which is full, faithful, unitarily essentially surjective, and commutes with the \dag\-functors. Then it forms part of a unitary \dag\-equivalence.
\end{lemma}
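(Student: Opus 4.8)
The plan is to run the classical construction of an equivalence out of a full, faithful, essentially surjective functor, but to make every choice unitary, and to use the hypothesis that $F$ commutes with the \dag-functors at the single point where unitarity has to be transported from $\cat D$ back to $\cat C$.

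First I would build the pseudo-inverse functor $G : \cat D \to \cat C$. For each object $d$ of $\cat D$, unitary essential surjectivity of $F$ lets me choose an object $Gd$ of $\cat C$ together with a \emph{unitary} isomorphism $\epsilon_d : FGd \to d$. For a morphism $f : d \to d'$ of $\cat D$, fullness and faithfulness of $F$ give a unique morphism $Gf : Gd \to Gd'$ of $\cat C$ with $F(Gf) = \epsilon_d ; f ; \epsilon_{d'}^{-1}$; faithfulness and functoriality of $F$ then force $G$ to preserve identities and composition. By construction $\epsilon : FG \Rightarrow \id_{\cat D}$ is natural (the naturality square is the defining equation rearranged), each component is an isomorphism, and each component is unitary by our choice.

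Next I would produce the unit. For each object $c$ of $\cat C$ the morphism $\epsilon_{Fc}^{-1} : Fc \to FGFc$ is unitary, so by fullness and faithfulness there is a unique $\eta_c : c \to GFc$ with $F(\eta_c) = \epsilon_{Fc}^{-1}$. Naturality of $\eta$, invertibility of each $\eta_c$ (here I use that a full faithful functor reflects isomorphisms), and the two triangle identities $F\eta_c ; \epsilon_{Fc} = \id_{Fc}$ and $\eta_{Gd} ; G\epsilon_d = \id_{Gd}$ all follow in the standard way: apply $F$, simplify using the defining formula for $G$ on morphisms, and conclude by faithfulness of $F$. This exhibits $(F, G, \eta, \epsilon)$ as an adjoint equivalence of categories.

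It remains to check that $\eta$ is unitary at every stage, and this is the only place the \dag-hypothesis on $F$ is genuinely used. Since $F$ commutes with the \dag-functors, $F(\eta_c ; \eta_c^\dag) = F(\eta_c) ; F(\eta_c)^\dag = \epsilon_{Fc}^{-1} ; (\epsilon_{Fc}^{-1})^\dag = \id_{Fc}$, because the inverse of the unitary $\epsilon_{Fc}$ is again unitary; similarly $F(\eta_c^\dag ; \eta_c) = \id_{GFc}$. Faithfulness of $F$ then gives $\eta_c ; \eta_c^\dag = \id_c$ and $\eta_c^\dag ; \eta_c = \id_{GFc}$, so $\eta_c$ is unitary. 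Combined with the unitarity of $\epsilon$ (by choice) and the hypothesis that $F$ commutes with the \dag-functors, this shows $F$ is part of a unitary \dag-equivalence. There is no serious obstacle beyond careful bookkeeping; the one real idea is to force the counit $\epsilon$ to be unitary using unitary essential surjectivity, and then to transport unitarity of the unit back along the faithful, \dag-preserving functor $F$.
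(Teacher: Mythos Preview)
Your proof is correct and follows essentially the same approach as the paper: choose unitary isomorphisms witnessing unitary essential surjectivity, build the inverse functor from them, and then use faithfulness together with the \dag-compatibility of $F$ to verify that the remaining natural isomorphism is unitary. The only cosmetic differences are that the paper swaps the names of the unit and counit relative to yours, and that the paper additionally verifies that the constructed $G$ commutes with the \dag-functors (which is not strictly required by its own definition of unitary \dag-equivalence, so your omission is not a gap).
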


\jbeginproof
We prove this by extending the conventional argument~\mbox{\cite[Theorem IV.4.1]{ml97-cwm}} that a full, faithful and essentially surjective functor forms part of an equivalence. Suppose that a functor $F: \cat C \to \cat D$ has the properties described in the hypothesis. Then for any object $d \in \cat D$, we can find a object $G_0 (d) \in \cat\ C$ and an unitary morphism $\eta _d: d \to F( G_0 (d))$. We want to promote the function $G_0 : \mathrm{Ob}( \cat D ) \to \mathrm{Ob} ( \cat C )$ into a functor $G:D \to C$, such that $\eta$ becomes a natural transformation. The naturality square for $\eta$ looks like this:
\begin{equation}
\begin{diagram}[midshaft,width=50pt,height=35pt]
d & \rTo^f & d'
\\
\dTo < {\eta_d} && \dTo > {\eta _{d'}}
\\
FG(d) & \rTo ^{FG(f)} & FG(d')
\end{diagram}
\end{equation}
It follows that $FG(f) = \eta_d ^\dag; f; \eta _{d'} ^{\pdag}$, and since $F$ is full and faithful, this uniquely defines $G$. Constructing this equation for the adjoint of $f$ we have $FG(f ^\dag) = \eta _{d'} ^\dag ; f ^\dag ; \eta _d ^\pdag$, and taking the adjoint of this equation gives $F(G(f ^\dag) ^\dag) = \eta _{d} ^\dag; f ; \eta_{d'} ^\pdag$. It follows that $FG(f) = F(G(f ^\dag) ^\dag)$, and since $F$ is full and faithful $G(f ^\dag) = G(f) ^\dag$, so $G$ commutes with the \dag\-functors. To fully demonstrate the unitary \dag\-equivalence we still need to construct a unitary natural transformation $\epsilon : GF \Rightarrow \id _{\cat C}$. We define this by $F(\epsilon _c) = \eta _{Fc} ^\dag$; since $F$ is full and faithful, this definition is valid. It is easy to show that these morphisms are unitary and natural, and in fact, the equivalence is an adjoint equivalence.
\end{proof}

We now prove the main theorem of this section.

\begin{theorem}
\label{innerproducttheorem}
Let $\dag:\cat{FdVect} \to \cat{FdVect}$ be a \dag\-functor on the monoidal category of finite-dimensional complex vector spaces. Then the following properties are equivalent:
\jbeginenumerate
\item equipped with \dag, \cat{FdVect} has all finite \dag\-limits and Dedekind-complete self-adjoint scalars;
\item there is a choice of inner product on each object of \cat{FdVect} such that the \dag\-functor acts by taking adjoints with respect to these inner products;
\item
there is a unitary \dag\-equivalence between \cat{FdVect} with its specified \dag\-functor, and \cat{FdHilb} with its canonical \dag\-functor.
\end{enumerate}
\end{theorem}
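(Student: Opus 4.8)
The plan is to prove the cycle of implications $(2)\Rightarrow(3)\Rightarrow(1)\Rightarrow(2)$. For $(2)\Rightarrow(3)$: given a choice of inner product on each object with respect to which $\dag$ acts by adjunction, define a functor $\cat{FdVect}\to\cat{FdHilb}$ that equips each vector space with its chosen inner product and is the identity on morphisms. Since every linear map between finite-dimensional spaces is bounded this is well defined, and it is visibly full and faithful; it commutes with the $\dag$-functors by the defining property of the inner products; and it is unitarily essentially surjective, because for any $H\in\cat{FdHilb}$ the underlying vector space of $H$ with its chosen inner product has the same dimension as $H$, hence is unitarily isomorphic to it. Lemma~\ref{ude} then upgrades this functor to a unitary $\dag$-equivalence.

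For $(3)\Rightarrow(1)$: let $F\colon\cat{FdVect}\to\cat{FdHilb}$ be the given unitary $\dag$-equivalence. Since $F$ is full, faithful, unitarily essentially surjective and commutes with the $\dag$-functors, the construction in the proof of Lemma~\ref{ude} applied to $F$ yields a quasi-inverse $G\colon\cat{FdHilb}\to\cat{FdVect}$ which is full, faithful, essentially surjective, commutes \emph{strictly} with the $\dag$-functors, and for which the natural isomorphism $GF\cong\id_{\cat{FdVect}}$ is unitary at every object. Both categories have finite biproducts, so each carries a unique enrichment in commutative monoids (Lemma~\ref{sumlemma}); being an equivalence, $G$ preserves biproducts and hence this enrichment, and combined with strict $\dag$-compatibility this shows $G$ carries $\dag$-limits to $\dag$-limits, the normalization condition transporting term by term. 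Transporting each finite $\dag$-limit of $\cat{FdHilb}$ along $G$ and correcting by the unitary isomorphism $GF\cong\id$ (which preserves normalization, exactly as in the proof of Lemma~\ref{uniqueness}) shows that $\cat{FdVect}$ has all finite $\dag$-limits. Finally $\cat{FdVect}$ is nontrivial and its tensor unit $\mathbb{C}$ is simple, so $F$ sends $\mathbb{C}$ to a simple object of $\cat{FdHilb}$, namely a one-dimensional Hilbert space; hence the involutive scalar ring $\End(\mathbb{C})$, which is $\mathbb{C}$, is isomorphic as an involutive ring to $\mathbb{C}$ with complex conjugation, so its self-adjoint part is isomorphic to $\mathbb{R}$ and therefore Dedekind-complete.

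For $(1)\Rightarrow(2)$, the main step: applying Theorem~\ref{fieldtheorem} and Theorem~\ref{complextheorem} to the nontrivial monoidal $\dag$-category $\cat{FdVect}$ --- which has simple tensor unit, all finite $\dag$-limits, and Dedekind-complete self-adjoint scalars --- the involution that $\dag$ induces on the scalar ring $\End(\mathbb{C})=\mathbb{C}$ is complex conjugation, so $c^\sdag=\overline c$ for scalars. Now for each object $V$ use the standard device: a vector $\phi\in V$ is the same as a morphism $\name\phi\colon\mathbb{C}\to V$, and set $\langle\phi,\psi\rangle_V:=\name\psi\,;\,\name\phi^\sdag\in\End(\mathbb{C})=\mathbb{C}$. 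Using only composition, distributivity of composition over the (unique, biproduct-induced) addition, and $c^\sdag=\overline c$, one checks routinely that $\langle\cdot,\cdot\rangle_V$ is a conjugate-symmetric sesquilinear form, and that for every morphism $f\colon V\to W$ one has $\name{f\phi}=\name\phi\,;\,f$, hence $\name{f\phi}^\sdag=f^\sdag\,;\,\name\phi^\sdag$, so that $f^\sdag$ is exactly the adjoint of $f$ with respect to these forms. It remains to prove positive-definiteness. Nondegeneracy of the $\dag$-functor (Lemma~\ref{nondegen}, available since finite $\dag$-equalizers exist) gives $\langle\phi,\phi\rangle_V\neq 0$ for $\phi\neq 0$; since $\langle\phi,\phi\rangle_V$ is a self-adjoint scalar it is a nonzero real, so each $\langle\cdot,\cdot\rangle_V$ is anisotropic, and as $V$ is finite-dimensional this forces it to be definite (positive- or negative-definite). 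A one-line calculation shows $\langle\cdot,\cdot\rangle_{\mathbb{C}}$ is the standard positive-definite form on $\mathbb{C}$. To fix the sign on a general $V$, form the $\dag$-biproduct $V\oplus\mathbb{C}$ (a $\dag$-limit, hence available): its injection $i_V\colon V\to V\oplus\mathbb{C}$ satisfies $i_V\,;\,i_V^\sdag=\id_V$ with $i_V^\sdag$ its adjoint, so $i_V$ preserves the forms and $\langle\cdot,\cdot\rangle_V$ and $\langle\cdot,\cdot\rangle_{V\oplus\mathbb{C}}$ have the same sign; applying the same argument to $i_{\mathbb{C}}$ shows $\langle\cdot,\cdot\rangle_{V\oplus\mathbb{C}}$, and hence $\langle\cdot,\cdot\rangle_V$, is positive-definite. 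This establishes $(2)$ and closes the cycle.

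I expect the genuine obstacle to be the final sign-fixing step of $(1)\Rightarrow(2)$: nondegeneracy of $\dag$ alone yields only that each induced form is \emph{definite}, and ruling out the negative-definite case requires the global input supplied by the $\dag$-biproducts $V\oplus\mathbb{C}$, whose isometric injections synchronize the sign of every form with that of the form on the tensor unit. A secondary delicate point, resting ultimately on the Dedekind-completeness hypothesis via Theorem~\ref{complextheorem}, is the identification of the scalar involution with genuine complex conjugation, without which the forms $\langle\cdot,\cdot\rangle_V$ would not be sesquilinear in the usual sense and would not qualify as inner products.
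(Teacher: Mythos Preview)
Your overall structure matches the paper's, and the arguments for $(2)\Rightarrow(3)$ and $(3)\Rightarrow(1)$ are essentially the paper's (yours are more explicit about transporting \dag\-limits along the equivalence, but the content is the same). Your positive-definiteness argument in $(1)\Rightarrow(2)$ is also a repackaging of the paper's: the paper assumes some $\phi$ has $\langle\phi,\phi\rangle=-1$ and directly exhibits the null vector $\left(\begin{smallmatrix}1\\\phi\end{smallmatrix}\right):\mathbb{C}\to\mathbb{C}\oplus A$, contradicting nondegeneracy (Lemma~\ref{nondegen}); you instead observe that anisotropy of the induced form on $V\oplus\mathbb{C}$ forces it to be definite and hence to restrict to $V$ and to $\mathbb{C}$ with a consistent sign. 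These are the same idea viewed from opposite ends.

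There is, however, a genuine gap --- the very point you flag as ``secondary'' but do not close. You assert that ``the involution that $\dag$ induces on the scalar ring $\End(\mathbb{C})=\mathbb{C}$ is complex conjugation, so $c^\sdag=\overline c$,'' citing Theorem~\ref{complextheorem}. But that theorem only yields an involution-preserving field \emph{isomorphism} $\chi:(\mathbb{C},\dag)\to(\mathbb{C},{*})$; it does not force $\chi=\id$. The fixed field of $\dag$ is an index-$2$ subfield of $\mathbb{C}$ abstractly isomorphic to $\mathbb{R}$, but conjugates of $\mathbb{R}$ under wild automorphisms of $\mathbb{C}$ give such subfields distinct from $\mathbb{R}$ itself, so one cannot conclude $\dag=\overline{\,\cdot\,}$ on scalars. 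Without this, your form satisfies only $\langle\psi,\phi\rangle_V=\langle\phi,\psi\rangle_V^{\sdag}$ rather than $\overline{\langle\phi,\psi\rangle_V}$, so it is not conjugate-symmetric and does not qualify as an inner product. The paper's fix is precisely to insert $\chi$ into the definition, setting $\langle\phi,\psi\rangle:=\chi(\psi\,;\,\phi^\sdag)$; then conjugate symmetry follows from $\chi\dag={*}\chi$, and your sign-synchronisation argument via $V\oplus\mathbb{C}$ (or the paper's null-vector version) goes through unchanged.
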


\jbeginproof

We begin with the implication $1 \Rightarrow 2$. The complex numbers are present in \cat{Vect} as endomorphisms of the one-dimensional vector space, and the \dag\-functor gives it an involution; we denote this involutive field by $(\mathbb{C}, \dag)$. This could be different to $(\mathbb{C}, {*})$, the complex numbers equipped with complex conjugation as involution. However, by Theorem~\ref{complextheorem}, there must be an involution-preserving field isomorphism  $\chi:(\mathbb{C},\dag) \to (\mathbb{C}, *)$. Since $\chi$ preserves the involution we have $\chi \dag = {*}\chi $, and since $\chi$ is invertible, we see that the involution induced by the \dag\-functor is \textit{conjugate} to complex conjugation.

For every object $A$ in \cat{Vect} we define a putative inner product for all $\phi,\psi:\mathbb{C} \to A$ as $\langle \phi, \psi \rangle := \chi(\psi ; \phi ^\dag): \mathbb{C} \to \mathbb{C}$. We must show that this satisfies the axioms of an inner product. We first establish that ${*}(\langle \phi,\psi \rangle) = \langle \psi, \phi \rangle$, by observing that $*(\langle \phi,\psi \rangle) = {*} \chi(\psi ; \phi ^\dag) = \chi {\dag} (\psi ; \phi ^\dag) = \chi(\phi; \psi ^\dag) = \langle \psi,\phi \rangle$. Now, suppose that some vector $\phi: \mathbb{C} \to A$ has negative norm under this inner product; without loss of generality we assume that it is normalized, so that $\phi$ satisfies $\langle \phi, \phi \rangle = -1$. Now consider the column vector $( \!\begin{smallmatrix} 1 \\[-0.5pt] \phi \end{smallmatrix}\! ): \mathbb{C} \to \mathbb{C} \oplus A$; this will have a norm of zero, which is ruled out by the \dag\-equalizer property as established by Lemma~\ref{nondegen}. We conclude that $\langle \phi, \phi \rangle > 0$ for all nonzero $\phi$. Linearity of the inner product follows straightforwardly from the properties of \dag\-biproducts. Altogether, the construction $\langle \phi, \psi \rangle := \chi( \psi ; \phi ^\sdag)$ is linear in the second argument, conjugate-symmetric and positive-definite, and hence is a genuine inner product. It is then trivial that for all $f:A \to B$, $\phi_A:\mathbb{C} \to A$ and $\phi_B: \mathbb{C} \to B$, we have $\langle\phi _B, (\phi _A ; f) \rangle = \langle (\phi _B; f ^\dag ), \phi _A \rangle$, and so the \dag\-functor takes linear maps to their adjoints and we have proved the implication.

For the implication $2 \Rightarrow 3$, the choice of functor is obvious: every object of \cat{FdVect} has an assigned inner product, and since a finite-dimensional complex vector space with inner product is necessarily a Hilbert space, we have a functor into \cat{FdHilb}. This functor is full, faithful and essentially surjective, since every Hilbert space is defined up to isomorphism by its cardinality and there will be Hilbert spaces of every finite cardinality in the image of the functor. Finally, it is clear that the inclusion is compatible with the action of the \dag\-functor, and since two isomorphic Hilbert spaces always have a unitary isomorphism between them, we have a unitary \dag\-equivalence by Lemma~\ref{ude}.

Finally, we consider the implication $3 \Rightarrow 1$. Let $F: \cat{FdHilb} \to \cat{FdVect}$ be  a functor forming part of the unitary \dag\-equivalence; then it gives rise to an involution-preserving field homomorphism $F: (\mathbb{C}, {*}) \to (\mathbb{C}, {\dag})$, where ${*}$ is the complex conjugation operation and ${\dag}$ represents the action of the \dag\-functor on the scalars of \cat{FdVect}. Since $F$\ also gives rise to a field isomorphism between the self-adjoint elements of both fields, and since the self-adjoint elements of $(\mathbb{C}, {*})$ are Dedekind-complete under the unique order on the real numbers, it follows that the self-adjoint elements of $(\mathbb{C}, \dag)$ also admit a unique order, which is Dedekind-complete. The implication is completed with the straightforward fact that, just as limits are preserved by equivalences, \dag\-limits are preserved by unitary \dag\-equivalences.
\end{proof}

\jnoindent
A similar theorem would hold for the category of all complex vector spaces, but we would then be dealing with inner-product spaces rather than Hilbert spaces.

\ignore{
\section{Fusion categories}

The purpose of this section is to demonstrate that if a monoidal \dag\-category with simple tensor unit,  \dag\-limits and Dedekind-complete scalars has good rigidity properties.

A \emph{monoidal \dag\-category} is a monoidal category which also a \dag\-category, such that the \dag\-functor is strict monoidal, and such that the unit and associativity natural isomorphisms are unitary at every stage. A monoidal \dag\-category is \emph{\dag\-pivotal} if every object $A$ has a chosen left dual $A^*$ with $(A^*)^*=A$, is equipped with duality morphisms $\epsilon_A:I \to A \otimes A^*$ and $\eta_A: A ^* \otimes A \to I$ satisfying the duality equations, and such that the induced duality functor $(-)^*$  commutes with the \dag\-functor. It can be shown that the duality functor on a \dag\-pivotal monoidal \dag\-category is an involution.

An \emph{H*-category} is a \dag\-category enriched in the category of complex Hilbert spaces and bounded linear maps, such that the inner products $\langle f{;}g, h \rangle$, $\langle f, h {;} g^\sdag \rangle$ and $\langle g, f ^\sdag {;} h \rangle$ are all equal whenever they are well-defined \cite[Proposition 3]{b97-hda2}. 

\begin{lemma}
If \dag\-pivotal monoidal \dag\-category has simple tensor unit, all finite \dag\-limits, and Dedekind-complete self-adjoint scalars, then it is a strict pivotal H*-category over the complex numbers.
\end{lemma}

\begin{theorem}
If a \dag\-pivotal monoidal \dag\-category has simple tensor unit, all finite \dag\-limits, and Dedekind-complete self-adjoint scalars, then it is a strict pivotal fusion \dag\-category over the complex numbers in a natural way. In particular, it is semisimple and abelian, and the hom-sets are finite-dimensional over the complex numbers.
\end{theorem}

\jbeginproof
From Theorem~\ref{complexthm}, we know that the scalars are either the nonnegative reals, the reals or the complexes, and that the \dag\-functor acts on them trivially in the first two cases and by complex conjugation in the third case. If the one of the first two cases hold, we will describe how augment the scalar ring, and the rest of the category, with the remaining elements of the complex numbers. 

Suppose that the scalars are the nonnegative reals. Then for each hom-set, add formal negatives for each element, then take the quotient given by the equivalence relation $f-g \sim h-j \Leftrightarrow f+j=h+g$. The result is a \dag\-pivotal monoidal \dag\-category with simple tensor unit, all finite \dag\-limits and Dedekind-complete self-adjoint scalars, for which the scalars are the real numbers.

The key to this proof is observing that the \dag\-pivotal structure allows us to give inner products to the hom-sets. For any parallel pair of morphisms $f,g:A \to B$, we a putative inner product as $\langle f,g \rangle \in \mathbb{C}$ to be $\Tr(f^\sdag; g)$. We must prove \mbox{(i)} that $\langle f,g \rangle = \overline {\langle f,g \rangle}$, (ii) that $\langle f,f \rangle = 0 \Leftrightarrow f=0$ and (iii) that it is linear in second argument, satisfying $ \langle f,\alpha g+ \alpha'  g' \rangle = \alpha \langle f,g \rangle + \alpha' \langle f,g' \rangle$ for $\alpha,\alpha' \in \mathbb{C}$.
\end{proof}

\begin{theorem}
A monoidal \dag\-category with simple tensor unit, which is \dag\-pivotal, has all finite \dag\-limits, and in which the self-adjoint scalars satisfy addition-compatible Dedekind-completeness, admits an enrichment in complex vector spaces making it into a strict pivotal fusion category over the complex numbers.
\end{theorem}

Semisimplicity is the key here. Cite Bartlett for `strict pivotal' fusion category.
}

\section{Technical discussion}

Our results give an abstract characterization of the properties endowed by the complex numbers on a physical theory. More importantly, this abstract characterization --- formalized by Theorem~\ref{complextheorem} --- admits a relatively clear physical interpretation. The most important structure is the requirement of having all finite  \dag\-limits, a type of completeness property which can be interpreted as the ability to take the direct sum of separate physical systems, modulo the action of processes, in a way which preserves norms. Another crucial structure is Dedekind completeness, which is the requirement that, in the totally-ordered set of self-adjoint scalars, every bounded set has a least upper bound and a greatest lower bound, and that these bounds get along with addition of scalars. In conventional quantum physics these self-adjoint scalars represent the results of measurements, and Dedekind completeness is a property that we observe experimentally. The final property is that the theory has a simple tensor unit; physically, this means that there exists a `trivial system' that behaves in sensible way, such that the only smaller system is the empty system.

The most arguable physical property is perhaps that of Dedekind completeness. Even without this Theorem~\ref{fieldtheorem} still applies, telling us that the theory is built on an involutive field of characteristic~0, with an orderable fixed field which is not the real numbers.

It is interesting to consider  the role played by the \dag\-functor in these results, which represents our ability to turn any process $f:A \to B$ into a process $f^\sdag :B \to A$. Two important lemmas, the cancellable addition lemma~\ref{additivelemma} and the exchange lemma~\ref{exchangelemma}, seem to rely crucially on the \dag\-functor. It seems that the power of the \dag\-functor lies in its ability to add an extra degree of symmetry to a system of equations. For example, in the proof of Lemma~\ref{additivelemma}, the role of the \dag\-functor is to prove $e_1 ^\sdag; e ^\pdag _2 = 0$ from the known equation $e_2 ^\sdag; e_1 ^\pdag = 0$. The underlying \dag\-equalizer diagram does not have a symmetry exchanging $e_1$ and $e_2$, but the existence of the \dag\-functor forces the existence of such a symmetry, proving the theorem. This contrasts with the proof of Lemma~\ref{ABablemma}, for which the diagram does have a symmetry exchanging $e_1$ and $e_2$, and the \dag\-functor is not directly required for the proof.

It is possible to consider variants of \dag\-biproducts and \dag\-equalizers that do not rely on the \dag\-functor --- such as biproducts, and equalizers that have retractions --- but it does not seem that these would be powerful enough to prove analogous results. We hope that these results, and others that rely crucially on properties of the \dag\-functor (such as~\cite{cpv08-dfb, dr89-ndt, v08-cfqa}), will stimulate interest in the \dag\-functor as a fundamental construction, both in category theory and in the foundations of quantum theory.

\bibliography{../../../jov}

\end{document}